\documentclass[oneside,reqno]{amsart}
\usepackage[margin=3cm]{geometry}

\usepackage{graphicx}
\usepackage{geometry}
\usepackage{graphicx}
\usepackage{amsmath,amssymb,amsthm}
\usepackage{todonotes}
\usepackage{hyperref}
\usepackage{enumitem}
\usepackage{mathrsfs}
\usepackage{bm}

\newtheorem{theorem}{Theorem}
\newtheorem{lemma}[theorem]{Lemma}
\newtheorem{corollary}[theorem]{Corollary}

\newtheorem{proposition}[theorem]{Proposition}
\newtheorem{remark}[theorem]{Remark}

\theoremstyle{definition}

\theoremstyle{plain}

\title{Point Processes and Gap Statistics for Translation Surfaces}

\author{Kai Fu}
\address[Kai Fu]{Max Planck Institute for Mathematics in the Sciences\\
Inselstrasse 22, A3 11, Leipzig 04103, Germany}
\email{kai.fu@mis.mpg.de}

\author{Anthony Sanchez}
\address[Anthony Sanchez]{Department of Mathematics, University of Houston\\ Philip Guthrie Hoffman Hall
3551 Cullen Blvd, Room 641
Houston, Texas 77204, USA}
\email{asanchez109@uh.edu}

\date{October 2026}

\begin{document}

\begin{abstract}
We study the behavior of saddle connection directions on translation surfaces from the point process viewpoint. We define a direction point process for saddle connections of length at most $R>0$ on every translation surface. Given a general ergodic $\mathrm{SL}(2,\mathbb R)$-invariant measure $\mu$ on a stratum of translation surfaces, we also define a slope point process associated with $\mu$. By utilizing the dynamics and geometry of strata, we show that for almost every translation surface, the direction point processes, rescaled by $R^2$, converge in distribution to the slope point process as $R\to\infty$. Using Palm distribution theory, we prove the existence of limiting joint gap distributions and obtain new regularity results for gap distributions associated with the general $\mu$. Finally, we study how these point processes and joint gap distributions behave as the genus $g$ tends to infinity. In the principal stratum, we show that the slope point processes, rescaled by $g^2$, converge in distribution to a Poisson point process. By passing to the corresponding Palm distributions, we show that the rescaled slope gaps converge jointly to independent exponential random variables, which is in stark contrast to the known non-exponential gap distributions in a fixed genus.
\end{abstract}

\maketitle

\section{Introduction}

\subsection{Translation surfaces and saddle connections}
A translation surface is a compact, connected Riemann surface $X$ equipped with a non-zero holomorphic one-form $\omega$. It can be represented by finitely many Euclidean polygons whose sides are identified in pairs by translations. The one-form $\omega$ determines a flat metric $|\omega|^2$ on $X$, with the zeros of $\omega$ as its conical singularities. A \emph{saddle connection} is a geodesic segment joining two zeros of $\omega$ with no zeros in its interior. Integrating $\omega$ over a saddle connection $\gamma$ defines the \emph{holonomy vector},
\[
\int_\gamma\omega\in\mathbb C,
\]
whose argument in $[0,2\pi)$ defines the direction of $\gamma$. One classical motivation for studying translation surfaces and saddle connections comes from the dynamics of polygonal billiards.

The counting and distribution of saddle connections and their directions have a long and rich history due to the interplay of the dynamics and geometry in the moduli space of translation surfaces. For the counting problem, much work has investigated the quadratic growth of the number of saddle connections with bounded length; see, for example, \cite{Ma1,Ma2,Veech1998,EskinMasur2001,EskinMirzakhaniMohammadi2015}.
In this paper, we focus on the distribution of their directions.

Masur~\cite{Mas86} showed that the set of saddle connection directions is dense in the circle for every translation surface. Vorobets~\cite{Vorobets1996} and Eskin--Masur~\cite{EskinMasur2001} established equidistribution of saddle connection directions for almost every surface in a stratum. Dozier~\cite{Dozier2019} showed that, for every translation surface, weak limits of the distribution of saddle connection directions are in the Lebesgue measure class on the circle. The fine-scale statistics of saddle connection directions were studied by Athreya--Chaika~\cite{AthreyaChaika2012}. We recall their results and subsequent developments in the next subsection.

\subsection{Gap distributions}

Given $g \ge 1$, let $\kappa$ be a tuple of positive integers summing to $2g-2$, and let $\mathcal{H}(\kappa)$ denote the stratum of translation surfaces $(X,\omega)$ such that the multiplicities of the zeros of $\omega$ are given by $\kappa$. The group $\mathrm{GL}^+(2,\mathbb R)$ acts on $\mathcal H(\kappa)$ through its linear action on polygonal representations of translation surfaces. Let $\mathcal{H}^1(\kappa)\subset \mathcal{H}(\kappa)$ denote the locus of unit-area surfaces. Then $\mathcal{H}^1(\kappa)$ is invariant under the action of $\mathrm{SL}(2,\mathbb R)$.

Masur and Veech~\cite{Masur1982,Veech1982} independently showed each connected component of $\mathcal{H}^1(\kappa)$ carries a canonical, finite $\mathrm{SL}(2,\mathbb R)$-invariant measure, now called the \emph{Masur--Veech measure}. Fix a connected component $\mathcal{C}$ of $\mathcal{H}^1(\kappa)$, and denote by $\mu_{\mathcal C}$ the Masur--Veech measure on $\mathcal C$, normalized so that $\mu_{\mathcal C}(\mathcal C)=1$.

For $\omega\in\mathcal C$ and $R>0$, let $\Theta^R_\omega$ be the set of directions of saddle connections on $\omega$ of length at most $R$. Let $D_R(\omega)=\#\Theta^R_\omega$ and write
\[
    \Theta^R_\omega
    =
    \left\{
    0\le \theta_1<\cdots<\theta_{D_R(\omega)}<2\pi
    \right\}.
\]
Define the gaps by
\begin{equation}\label{equ:original-gap-sequence}
    \Delta_i^\omega
    :=
    \theta_{i+1}-\theta_i,
    \qquad
    1\le i\le D_R(\omega),
\end{equation}
where $\theta_{D_R(\omega)+1}=\theta_1+2\pi$.

Athreya and Chaika~\cite{AthreyaChaika2012} proved that for $\mu_{\mathcal C}$-almost every $\omega$, the limiting gap distribution exists. More precisely, there is a distribution function $G_{\mathcal C}:[0,\infty)\to[0,1]$ such that
\begin{equation}\label{equ:gap-distribution-existence}
    \lim_{R\to\infty}
    \frac{
    \#\left\{
    1\le i\le D_R(\omega):
    R^2\Delta_i^\omega\le t
    \right\}}
    {D_R(\omega)}
    =
    G_{\mathcal C}(t).
\end{equation}

Explicit formulas and stronger regularity results for specific lattice surfaces are proven in \cite{Al25,AthreyaChaikaLelievre2015,AthreyaCheung2014,BMMUW2023,MY25,UyanikWork2016}. Piecewise real analyticity of the slope gap distribution of any lattice surface is proven in \cite{KumanduriSanchezWang2024}. For non-lattice surfaces, an explicit slope gap distribution is computed in \cite{Sanchez2022} for identical tori glued along a slit and piecewise real analyticity of slope gap distribution for $\mathcal H^1(2)$ follows from \cite{Work2020}. Other related works include \cite{BS26} that proves the slope gap distribution is not exponential in genus 2, potentially outside of torus covers, \cite{OsmanSoutherlandWang2025} that proves an effective convergence to the gap distribution in the lattice case, and \cite{ChaikaFairchild2024} who considered rates for gaps about the horizontal direction of generic surfaces.

Despite these developments, no general regularity results or quantitative descriptions appear to be known for the gap distributions $G_{\mathcal C}$ in arbitrary genera.

In this paper, we approach these questions from the point process viewpoint.

\subsection{Point process approach}

We identify a locally finite subset $\zeta\subset\mathbb R$ with the \emph{counting measure} $\sum_{x\in\zeta}\delta_x$. Let $\mathcal N_s(\mathbb R)$ denote the space of locally finite simple counting measures on $\mathbb R$. Here, simple means the mass of $\zeta$ at every $x\in\mathbb R$ is at most one.  We regard random elements of $\mathcal N_s(\mathbb R)$ as point processes; the relevant definitions are recalled in Section~\ref{subsec:point-process}.

For every $\omega\in\mathcal C$ and $R>0$, extend the directions in $\Theta_\omega^R$ $2\pi$-periodically to $\mathbb R$ and denote the resulting subset by $\widetilde \Theta_\omega^R$. Choosing $\theta$ uniformly in $[0,2\pi)$, define the \emph{direction point process} $\xi_\omega^R$ by
\[
\xi_\omega^R(\theta):= 
\widetilde{\Theta}^R_\omega - \theta 
= 
\left\{\varphi-\theta: \varphi\in\widetilde\Theta_\omega^R\right\}.
\]
To describe the limiting distribution of this family as $R\to\infty$, let $\Lambda_\omega$ denote the set of holonomy vectors of saddle connections on $\omega$, and set $V:=(0,1)\times\mathbb R$. For $\omega$ distributed according to $\mu_{\mathcal C}$, define the \emph{slope point process} $\xi_{\mathcal C}$ by
\[
\xi_{\mathcal C}(\omega) := \left\{ \frac{v_y}{v_x}: v=(v_x,v_y)\in\Lambda_\omega\cap V \right\}.
\]
The direction and slope point processes are studied in detail in Section~\ref{subsec:direction-and-slope-pp}.

For $a\in\mathbb R\setminus\{0\}$ and $\zeta\in\mathcal N_s(\mathbb R)$, we write $a\zeta$ corresponding to the subset $\{ax:x\in\zeta\}$. We have the following convergence of rescaled point processes.

\begin{theorem}\label{thm:direction-point-process}
For every connected component $\mathcal C$, for $\mu_{\mathcal C}$-almost every $\omega\in\mathcal C$,
\[
    R^2\xi^R_\omega
    \xrightarrow{d}
    \xi_{\mathcal C}
    \qquad\text{as }R\to\infty.
\]
\end{theorem}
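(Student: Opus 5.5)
Convergence in distribution of simple point processes on $\mathbb R$ is determined by finite-dimensional distributions on finite unions of bounded intervals. By Kallenberg's criterion for simple limits, it suffices to check two things. First, convergence of void probabilities $\mathbb P(\zeta(U)=0)$ for finite unions $U$ of bounded intervals. Second, convergence of the mean measures $\mathbb E[\zeta(I)]\to \mathbb E[\xi_{\mathcal C}(I)]$ for bounded intervals $I$. The plan is to convert each such event for $R^2\xi_\omega^R(\theta)$ into an event about $\Lambda_{r_\theta\omega}$ lying in a fixed region, after applying the geodesic flow $g_t=\mathrm{diag}(e^{t},e^{-t})$ with $e^{t}=R$. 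Then the Birkhoff-type equidistribution of circle averages over $g_t$-orbits, due to Eskin--Masur/Nevo, applied to the appropriate Siegel--Veech type functions, gives the convergence in $R$ for $\mu_{\mathcal C}$-a.e.\ $\omega$.

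\textbf{Step 1 (geometric translation).} Fix $\theta$ and rotate by $r_{-\theta}$ so that the direction $\theta$ becomes horizontal. A saddle connection with holonomy $v$ of length $\le R$ and direction $\theta+s/R^2$, with $s$ in a bounded interval $I$, has rotated holonomy approximately $(|v|\cos(s/R^2),|v|\sin(s/R^2))$, with $|v|\le R$. Apply $g_t$ with $e^t=R$ to get $(|v|/R,\; R|v|\sin(s/R^2))\approx(x,\; xs)$, where $x=|v|/R\in(0,1]$, up to errors $O(R^{-2})$ that vanish as $R\to\infty$. Hence, up to boundary effects, $R^2\xi^R_\omega(\theta)\cap I$ corresponds to the slopes $v_y/v_x\in I$ for $v\in\Lambda_{g_t r_{-\theta}\omega}\cap V$. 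Here the direction-to-slope map is $v\mapsto v_y/v_x$, and distinct directions give distinct slopes. For a finite union $U$ of intervals, let $T_U=\{(x,y):0<x\le 1,\, y/x\in U\}$, a bounded cone-shaped region. Then
\[
\mathbb P\bigl(R^2\xi^R_\omega(U)=0\bigr)\approx\frac{1}{2\pi}\int_0^{2\pi}\mathbf 1\{\Lambda_{g_tr_\theta\omega}\cap T_U=\emptyset\}\,d\theta .
\]
I would make "$\approx$" precise by sandwiching $T_U$ between slightly shrunk and enlarged regions $T_U^{\pm\epsilon}$, which is valid for $R$ large.

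\textbf{Step 2 (equidistribution).} By the ergodic theorem for circle averages (Nevo, as used by Eskin--Masur), for $\mu_{\mathcal C}$-a.e.\ $\omega$ we have $\frac1{2\pi}\int f(g_tr_\theta\omega)\,d\theta\to\int f\,d\mu_{\mathcal C}$ along $t\to\infty$, for a countable family of bounded $f$. Taking $t$ real rather than integer needs a standard continuity/interpolation argument. Apply this to $f=\mathbf 1\{\Lambda\cap T^{\pm\epsilon}_U=\emptyset\}$ for $U$ ranging over rational unions and $\epsilon$ rational, which gives a countable family. These $f$ are not continuous, so I would approximate them from above and below by continuous compactly supported functions, or use that the discontinuity set is $\mu_{\mathcal C}$-null. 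The latter holds because saddle connections on $\partial T$ form a null event, by $\mathrm{SL}(2,\mathbb R)$-invariance and absolute continuity of the Siegel--Veech mean. Letting $\epsilon\to0$ gives convergence to $\mu_{\mathcal C}(\Lambda\cap T_U=\emptyset)$, which equals $\mathbb P(\xi_{\mathcal C}(U)=0)$.

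\textbf{Step 3 (intensities; the main obstacle).} For the mean measure, $\mathbb E[R^2\xi^R_\omega(I)]$ is the circle average of the unbounded Siegel--Veech transform $\widehat{\mathbf 1_{T_I}}$. Here care is needed about multiplicities, since parallel saddle connections give one direction but possibly several vectors. This is the delicate point: Nevo's theorem does not apply directly to unbounded functions. I would avoid it by noting that $\mathbb E[R^2\xi^R_\omega(I)]=\frac{|I|}{2\pi R^2}D_R(\omega)$ exactly, by translation invariance of the uniform $\theta$. Then the quadratic asymptotics $D_R(\omega)\sim c\,\pi R^2$ for a.e.\ $\omega$ (Eskin--Masur) give convergence to $c|I|/2$. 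The remaining issue is to identify this constant with the intensity of $\xi_{\mathcal C}$ via the Siegel--Veech formula, which gives $\mathbb E\,\xi_{\mathcal C}(I)=c_{\mathcal C}\,\mathrm{Leb}(T_I)=c_{\mathcal C}|I|/2$. This identification requires consistently counting directions rather than vectors, that is, using the Siegel--Veech constant for the set of primitive directions on both sides. Combining Steps 2 and 3 with Kallenberg's theorem yields the claim.
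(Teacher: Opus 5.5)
Your proof is correct, but it uses a different convergence criterion from the paper's.

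\textbf{The paper's route.} Via Theorem~\ref{thm:fixed-stratum-point-process} and Proposition~\ref{prop:joint-count-convergence}, the paper proves convergence of all joint counting probabilities $\frac{1}{2\pi}\mathrm{Leb}\{\theta: N(\omega,\theta,R,I_i)=n_i,\ 1\le i\le m\}$. It then invokes the finite-dimensional-distribution characterization, Theorem~\ref{thm:characterization-point-process-convergence}. To compare the renormalized wedges with $T_I$, it controls the union $E^*_{t_0}$ of symmetric differences over all $t\ge t_0$. It bounds the measure of surfaces having a holonomy vector in $E^*_{t_0}$ by the Siegel--Veech formula, and uses the circle-limit property for bounded measurable test functions.

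\textbf{Your route.} You use Kallenberg's criterion for a simple limit, which needs only void probabilities and an upper bound on mean measures. This gives two simplifications.
\begin{enumerate}
\item Void events are monotone in the region. The sandwich $T_U^{-\epsilon}\subset(\text{renormalized wedge})\subset T_U^{+\epsilon}$ therefore gives a clean two-sided bound pointwise in $\theta$ once $R$ is large. You never need to track the symmetric differences uniformly in $t$.
\item The wedges and $T_U$ are star-shaped from the origin, so void events cannot distinguish $\Lambda_\omega$ from $\Lambda_\omega^{\mathrm{red}}$. Multiplicities enter only through the mean, which you compute exactly as $|I|\,D_R(\omega)/(2\pi R^2)$. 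You then handle it with Eskin--Masur, which is the same input the paper needs anyway in Lemma~\ref{lem:convergence-of-intensities} for the Palm step.
\end{enumerate}
The price is reliance on a less elementary theorem that requires the limit to be simple, which is clear here. The paper's route, in exchange, yields the full joint count convergence of Proposition~\ref{prop:joint-count-convergence}. That is a direct generalization of Athreya--Chaika's Theorem~3.2 and is of independent interest.

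\textbf{Two minor points.}
\begin{enumerate}
\item In your display the flow runs the wrong way. With $g_t=\mathrm{diag}(e^t,e^{-t})$ and $e^t=R$ you need $g_{-t}$, which contracts the horizontal and expands the vertical by $R$. Your verbal computation in Step~1 in fact does this.
\item Nevo's theorem in the form used by Eskin--Masur is an $L^{1+\varepsilon}$ statement; this is why the integrability condition $(C_\mu)$ appears. So for each fixed $f$ it applies directly to the indicator functions in Step~2, and no continuous approximation is needed. It also applies to Siegel--Veech transforms, so your caveat in Step~3 is unnecessary, though your workaround via $D_R(\omega)$ is perfectly valid.
\end{enumerate}
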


Here, $\xrightarrow{d}$ denotes convergence in distribution on $\mathcal N_s(\mathbb R)$; see Section~\ref{subsec:point-process} for the relevant definitions. The scaling by $R^2$ reflects the quadratic growth of $\#D_R(\omega)$ in $R$.

A standard probabilistic tool for studying gaps in point processes is the Palm distribution; see Section~\ref{subsec:Palm-distribution} for an introduction. We consider the convergence of the Palm distributions of $\xi_\omega^R$ to that of $\xi_{\mathcal C}$. This identifies the limiting gap distributions, and more generally the joint gap distributions, in terms of the Palm distribution of $\xi_{\mathcal C}$. This identification also allows us to study their regularity.

\begin{theorem}\label{thm:joint-gap-distributions}
Let $\mathcal C$ be a connected component of $\mathcal H^1(\kappa)$. For every $m\in\mathbb N$ and any distinct $j_1,\ldots,j_m\in\mathbb Z$, there exists a function
\[
    G_{\mathcal C;\,j_1,\ldots,j_m}
    :
    [0,\infty)^m\to[0,1]
\]
satisfying the following.
\begin{enumerate}
    \item
    For $\mu_{\mathcal C}$-almost every $\omega\in\mathcal C$ and every
    $t_1,\ldots,t_m\ge0$,
    \begin{equation}\label{equ:joint-gap-distributions-existence}
    \begin{aligned}
    &\lim_{R\to\infty}
    \frac{1}{D_R(\omega)}
    \#\left\{
    1\le i\le D_R(\omega):
    R^2\Delta^\omega_{i+j_\ell}\le t_\ell
    \text{ for }1\le\ell\le m
    \right\}
    \\
    &\hspace{4cm}
    =
    G_{\mathcal C;\,j_1,\ldots,j_m}
    (t_1,\ldots,t_m).
    \end{aligned}
    \end{equation}
    Here the indices of $\Delta_i^\omega$ are understood modulo $D_R(\omega)$.

    \item The function $G_{\mathcal C;\,j_1,\ldots,j_m}$ is continuous on $[0,\infty)^m$, strictly positive on $(0,\infty)^m$, and for every $a>0$ and $ (t_\ell),(t_\ell')\in[a,\infty)^m$, 
    \[
        \big| G_{\mathcal{C};\, j_1,\ldots,j_m}(t_1',\ldots,t_m') 
        - 
        G_{\mathcal{C};\, j_1,\ldots,j_m}(t_1,\ldots,t_m) \big| 
        \le 
        2e
        \left(2+\frac{1}{a}\right) \sum_{1\le \ell\le m} |t_\ell'-t_\ell|.
    \]

    \item
    The function $G_{\mathcal C;\,j_1,\ldots,j_m}$ is absolutely continuous for $m\le2$, but is not absolutely continuous for $m\ge\dim_{\mathbb R}\mathcal C$.
\end{enumerate}
\end{theorem}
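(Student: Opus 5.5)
We pass from Theorem~\ref{thm:direction-point-process} to Palm distributions. The process $\xi_{\mathcal C}$ is stationary on $\mathbb R$: shearing by $u_s=\begin{pmatrix}1&0\\ s&1\end{pmatrix}$ sends $(v_x,v_y)\mapsto(v_x,v_y+sv_x)$, preserves $V$, and translates every slope by $s$, and $\mu_{\mathcal C}$ is $u_s$-invariant. The intensity of $\xi_{\mathcal C}$ is finite by the Siegel--Veech formula. We would define $G_{\mathcal C;j_1,\dots,j_m}(t)$ as the probability, under the Palm distribution $\xi^0_{\mathcal C}$, that the gaps $x_{j_\ell+1}-x_{j_\ell}\le t_\ell$ for all $\ell$, where the points are enumerated with $x_0=0$. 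On the Masur--Veech side, the Palm measure is realized concretely as follows: take $\omega$ from the Siegel--Veech transform measure on surfaces carrying a saddle connection with holonomy $(r,0)$, $r\in(0,1)$, weighted by the Siegel--Veech constant. Equivalently, it is the disintegration of $\mu_{\mathcal C}$ along the transversal $\{\omega:\exists$ horizontal saddle connection with $v_x<1\}$.

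\textbf{(1)} The empirical gap counts are exactly the Palm quantities of $R^2\xi^R_\omega$. Choosing a uniform point of $\Theta_\omega^R$ and recentering is the discrete Palm version of choosing $\theta$ uniformly. We would prove convergence of the Palm distributions of $R^2\xi^R_\omega$ to $\xi^0_{\mathcal C}$ by the standard criterion: convergence in distribution of stationary processes, together with convergence of intensities $D_R(\omega)/(2\pi R^2)\to$ intensity of $\xi_{\mathcal C}$ (the Eskin--Masur counting for a.e.\ $\omega$), yields Palm convergence. The continuity from (2) then upgrades this to convergence at every $t$.

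\textbf{(2)} For continuity and the Lipschitz bound, we would use the Palm inversion formula relating $\xi^0_{\mathcal C}$ to $\xi_{\mathcal C}$. The event that a gap lies in $[t,t']$ forces a point of $\xi^0$ in an interval of length $|t'-t|$ at a bounded location. Its Palm probability is bounded by second-moment (two-point correlation) quantities of $\xi_{\mathcal C}$ on intervals of length $\ge a$, divided by $a$. This is where the factor $2+1/a$ would come from. The constant $e$ suggests using a tail bound $P(\xi(I)\ge k)$ of Poisson-comparison type, or a union bound summing $\frac{1}{k!}$-type terms. For strict positivity, we would construct an open set of surfaces in $\mathcal C$, for instance via cylinder/horizontal saddle connection configurations, whose slope sets near $0$ realize all prescribed small gaps. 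Openness then gives positive Masur--Veech measure.

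\textbf{(3)} For $m\le 2$, absolute continuity would come from the Palm-to-stationary relation: the one-gap distribution has density given by $\frac{1}{\text{intensity}}P(\text{no points in }(0,t))$-type derivatives. For two gaps, we would use the one-parameter shear-plus-diagonal flow to disintegrate smoothly in two coordinates. For $m\ge\dim_{\mathbb R}\mathcal C$, the Palm measure lives on the transversal, which has dimension $\dim_{\mathbb R}\mathcal C-1$. The map to $m$ gaps therefore has image of dimension $<m$ and Lebesgue measure zero, so the measure is singular.

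The main obstacle will be the Lipschitz estimate in (2) with the explicit constant, which requires uniform second-moment control of saddle connections (quadratic asymptotics of pairs, as in Eskin--Masur integrability).
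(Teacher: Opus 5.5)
Your part (1) is the paper's argument: Kallenberg's Palm convergence from $R^2\xi^R_\omega\to\xi_{\mathcal C}$ together with convergence of the intensities $D_R(\omega)/(2\pi R^2)$, and then continuity to remove the continuity-point restriction. Your $m\ge\dim_{\mathbb R}\mathcal C$ argument, restricting to the codimension-one transversal, is equivalent to the paper's. The paper instead notes that slope differences are invariant under the horocycle flow $h_s$, so the slope-difference maps in period coordinates have rank at most $\dim_{\mathbb R}\mathcal C-1$, and then applies Sard. The genuine gap is the Lipschitz estimate in (2).

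\textbf{The gap in (2).} Your route bounds $P^0(\Delta_0\in(t,t+h])$ by the reduced second moment measure of $\xi_{\mathcal C}$ on $(t,t+h]$. You would then need that measure to have a bounded density on $[a,\infty)$, uniformly in $t$. Nothing you cite gives this.
\begin{itemize}
\item Eskin--Masur integrability only gives $L^{1+\varepsilon}$.
\item Even granting finite second moments, a density bound requires control of pairs of saddle connections whose holonomies span small area. The paper identifies exactly this as the obstruction to extending Lipschitz regularity to the boundary $t\to0$. It is not needed on $[a,\infty)$.
\item This route cannot produce the stated constant $2e(2+1/a)$, and your guesses about the origin of $e$ and $2+1/a$ are not correct.
\end{itemize}

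\textbf{The missing idea.} Average over the Teichm\"uller flow and use only the first-moment Siegel--Veech formula.
\begin{enumerate}
\item By the refined Campbell formula with $I=(0,1)$ and $g_r$-invariance of $\mu$, $G_{\mathcal C;0}(t+h)-G_{\mathcal C;0}(t)=\lambda^{-1}E\bigl[\int_0^1N_{t,h}(g_r\omega)\,dr\bigr]$. Here $N_{t,h}$ counts slopes in $(0,1)$ whose next gap lies in $(t,t+h]$.
\item Since $g_r$ multiplies every slope difference by $e^{-r}$, a fixed pair $(v,w)$ has its gap in $(t,t+h]$ only for $r$ in a set of length $\log(1+h/t)\le h/t$.
\item Every pair that is consecutive in $\mathbb S_{g_r\omega}$ with $s(g_rv)\in(0,1)$ for some $r\in[0,1]$ lies in $K_{t,h}=\{0<x<1,\ 0\le y/x\le e(1+t+h)\}$. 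The factor $e$ is $e^{r}$ at $r=1$.
\item There are at most $2\#(\Lambda^{\mathrm{red}}_\omega\cap K_{t,h})$ such pairs. Pairs already consecutive at $r=0$ are determined by $v$. Every other pair is recovered injectively from the intermediate vector that leaves $V$ last, at time $-2\log u_x$, taking the smallest slope among ties. Its surviving neighbours at that time are $v$ and $w$.
\item Siegel--Veech then gives the bound $2e(1+t+h)h/t\le 2e(2+1/a)h$ for $t\ge a$ and $h\le t$. The case $h>t$ is trivial since the difference is at most $1<h/a$.
\item The $m$-dimensional bound follows from $|G(t')-G(t)|\le\sum_\ell|G_{j_\ell}(t'_\ell)-G_{j_\ell}(t_\ell)|$ and shift-stationarity $G_{j}=G_0$.
\end{enumerate}

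\textbf{Smaller points.}
\begin{itemize}
\item \emph{Positivity.} Your open-set idea can work, since saddle connections persist under small deformation and $\mu_{\mathcal C}$ charges open sets. You still need positive measure of surfaces with at least $2k+2$ saddle connections in $T_{(-\sigma,\sigma)}$, where $k=\max|j_\ell|$ and $2\sigma\le\min(1,t_\ell)$. The paper takes this from Athreya--Chaika, Lemma~5.4. Any $2k+2$ consecutive slopes in an interval of length at most $\min_\ell t_\ell$ then make all required gaps small.
\item \emph{Absolute continuity for $m=2$.} A one-parameter flow cannot give rank $2$. The paper uses the two directions $g_t$ and the upper-triangular unipotent $u_r$; the shear $h_s$ lies in the kernel. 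The Jacobian of $(d_{\alpha_1,\beta_1},d_{\alpha_2,\beta_2})$ in these directions has determinant $d_{\alpha_1,\beta_1}d_{\alpha_2,\beta_2}(c+d-a-b)$. This is positive because the four slopes satisfy $a<b\le c<d$, so the slope-difference map is a submersion and pulls back null sets to null sets.
\item \emph{Absolute continuity for $m=1$.} This follows from the local Lipschitz bound and the absence of an atom at $0$. The void-probability identity you invoke only expresses $1-G$ as a derivative and does not prove absolute continuity.
\end{itemize}
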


The Palm distribution identification mentioned above takes the following form. Let $P_{\mathcal C}$ be the law of the slope point process $\xi_\mathcal{C}$ on $\mathcal N_s(\mathbb R)$. The Palm distribution associated to $P_{\mathcal C}$, denoted by $P_{\mathcal C}^0$, is a probability measure on the space of bi-infinite simple counting measures on $\mathbb R$ with a point at zero. For such a counting measure $\zeta$, write its points as
\[
\cdots<x_{-1}(\zeta)<x_0(\zeta)= 0<x_1(\zeta)<\cdots.
\]
Then
\[
G_{\mathcal C;\,j_1,\ldots,j_m}(t_1,\ldots,t_m)
=
P_{\mathcal C}^0
\bigl(
x_{j_\ell+1}(\zeta) - x_{j_\ell}(\zeta)
\le 
t_\ell
\text{ for }1\le\ell\le m
\bigr).
\]

It remains to understand whether the Lipschitz regularity in (2) extends to the boundary of $[0,\infty)^m$. Such an extension appears to require an asymptotic estimate for the number of pairs of saddle connections as the area spanned by their holonomy vectors tends to zero. Athreya, Fairchild, and Masur~\cite{AthreyaFairchildMasur2023} studied a related counting problem with a fixed upper bound on this area.

In~(3), absolute continuity means that $G_{\mathcal C;,j_1,\ldots,j_m}$ admits an $L^1$ density on $[0,\infty)^m$. The absolute continuity of $G_{\mathcal C;,j_1,\ldots,j_m}$ remains open when 
\[ 
3\le m\le\dim_{\mathbb R}\mathcal C-1. 
\]

We also discuss how the Palm distribution formulation is related to~\cite{AthreyaChaika2012} and to the Poincar\'e section for the horocycle flow; see Remark~\ref{rmk:connection-AC} and Section~\ref{subsec:connection-to-Poincare-section}.

\subsection{General ergodic measures}

The preceding results hold in greater generality. Let $\mu$ be an ergodic $\mathrm{SL}(2,\mathbb R)$-invariant probability measure on $\mathcal H^1(\kappa)$ and let $\mathcal M:=\operatorname{supp}\mu$. By the breakthrough work of Eskin--Mirzakhani~\cite{EskinMirzakhani2018} and Eskin--Mirzakhani--Mohammadi~\cite{EskinMirzakhaniMohammadi2015}, the support $\mathcal{M}$ is an immersed affine invariant submanifold. We recall the relevant definitions in Section~\ref{subsec:AIS}.

We consider the following assumption:
\begin{enumerate}[label=($\ast$), ref=($\ast$)]
    \item\label{ass:circle-limit} $\mu$-generic points have circle limit measure $\mu$. That is, if $\omega$ is $\mu$-generic, then for each bounded, measurable $f$, we have
    $$\lim_{t\to\infty}\frac{1}{2\pi}\int_0 ^{2\pi}f(g_{-t}r_{\theta}\omega)\, d\theta= \int f \,d\mu$$
    where $g_t = \begin{pmatrix}
e^{t/2} & 0 \\ 0 & e^{-t/2} \\
\end{pmatrix} $ and $r_{\theta}=\begin{pmatrix}
\cos(\theta) & -\sin(\theta) \\ \sin(\theta) & \cos(\theta) \\
\end{pmatrix} $.
\end{enumerate}

The assumption~\ref{ass:circle-limit} is proved for $\mu_\mathcal C$-almost every surface with respect to the Masur-Veech measure by Eskin--Masur \cite{EskinMasur2001}. When $\mathcal M$ is the orbit of a Veech surface, then assumption~\ref{ass:circle-limit} follows from Proposition 2.2.1 of Kleinbock-Margulis \cite{MR1359098}. Other special cases where  assumption~\ref{ass:circle-limit} is satisfied include branched covers of Veech surfaces (Eskin--Marklof--Morris \cite{EMM06}) and the eigenform loci in genus 2 (Bainbridge--Smillie--Weiss \cite{BSW}). The assumption~\ref{ass:circle-limit} has been announced for general ergodic $\mathrm{SL}(2,\mathbb R)$-invariant probability measures by Solan.

We prove the following more general version of Theorem~\ref{thm:joint-gap-distributions}; a complete statement is given in Theorem~\ref{thm:general-joint-gap-distributions}.

\begin{theorem}\label{thm:general-joint-gap-distributions-in-intro}
    Assume that $\mu$ satisfies~\ref{ass:circle-limit}. Then, for every $m\in\mathbb N$ and any distinct $j_1,\ldots,j_m\in\mathbb Z$, there exists a distribution function
    \[
    G_{\mathcal{M};\,j_1,\ldots,j_m}:[0,\infty)^m\to[0,1]
    \]
    for which the conclusions of Theorem~\ref{thm:joint-gap-distributions} hold with $(\mathcal{C},\mu_\mathcal{C})$ replaced by $(\mathcal{M},\mu)$ and $G_{\mathcal C;\,j_1,\ldots,j_m}$ replaced by $G_{\mathcal{M};\,j_1,\ldots,j_m}$, except that the strict positivity assertion in~(2) is omitted.
\end{theorem}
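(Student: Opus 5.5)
The proof follows the same Palm-distribution route as Theorem~\ref{thm:joint-gap-distributions}, with $(\mathcal C,\mu_{\mathcal C})$ replaced by $(\mathcal M,\mu)$. Assumption~\ref{ass:circle-limit} takes over the role of the Eskin--Masur equidistribution.

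\textbf{Step 1: convergence of the direction point processes.} First I would prove the analogue of Theorem~\ref{thm:direction-point-process}: for $\mu$-generic $\omega$, $R^2\xi^R_\omega\xrightarrow{d}\xi_{\mathcal M}$, where $\xi_{\mathcal M}$ is the slope point process of $\mu$. The key identity is that for $\theta$ uniform, the set $R^2\xi^R_\omega(\theta)$ near $0$ is, up to an error vanishing as $R\to\infty$, the set of slopes of $\Lambda_{g_{-t}r_{-\theta}\omega}\cap V$ with $e^{t/2}=R$. Indeed, rotating by $-\theta$ and applying $g_{-t}$ sends saddle connections of length $\le R$ with direction near $\theta$ to vectors with horizontal component in $(0,1]$. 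Their slopes are multiplied by $R^2$ to first order, and the nonlinearity of $\tan$ is negligible on compact windows.

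By standard criteria it suffices to check convergence of the void or finite-dimensional probabilities $\mathbb P(\zeta(B_i)=n_i)$ for finite unions of bounded intervals $B_i$. Each of these is the circle average of an indicator $f$ on $\mathcal M$, and \ref{ass:circle-limit} applies to such bounded measurable $f$. One technical point is that the indicator is discontinuous, since saddle connections can sit exactly on $\partial V$ or at slope endpoints. I would handle this by sandwiching $f$ between functions defined with slightly enlarged and shrunk windows. The resulting boundary contribution is then controlled by a Siegel--Veech formula for $\mu$, namely $\int\#(\Lambda_\omega\cap A)\,d\mu=c_\mu\,\mathrm{Leb}(A)$. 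This formula holds for any ergodic $\mathrm{SL}(2,\mathbb R)$-invariant $\mu$ by Veech/Eskin--Masur, and it makes thin sets negligible.

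\textbf{Step 2: Palm convergence and the gap formula.} From Step 1, and from stationarity and finite intensity $c_\mu$ of $\xi_{\mathcal M}$, I would deduce that the Palm distributions of $R^2\xi^R_\omega$ converge to $P^0_{\mathcal M}$. For this I would use the generic lemma, proved earlier for $\mathcal C$, that convergence in distribution plus convergence of intensities gives Palm convergence; the intensities converge by quadratic counting for $\mu$-generic surfaces, which follows from \ref{ass:circle-limit} and Siegel--Veech. The empirical quantity in \eqref{equ:joint-gap-distributions-existence} is exactly the Palm probability of the event $\{x_{j_\ell+1}-x_{j_\ell}\le t_\ell\ \forall\ell\}$ for the finite process. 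This event is a $P^0_{\mathcal M}$-continuity set by the continuity result in (2), so part~(1) follows.

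\textbf{Step 3: regularity.} The Lipschitz bound on $[a,\infty)^m$ and continuity come from the Palm inversion formula. This formula expresses $G$ through $P_{\mathcal M}$-probabilities of configurations in intervals, whose variation in $t$ is bounded by the intensity of points in thin intervals. That intensity is given by Siegel--Veech with the universal constants, which is presumably why the constant $2e(2+1/a)$ does not depend on $\mu$. Absolute continuity for $m\le2$ uses that one or two gaps are determined by one or two further saddle connections, via the $\mathrm{SL}(2,\mathbb R)$-invariance and disintegration of $\mu$ along the horocycle direction. Non-absolute-continuity for $m\ge\dim_{\mathbb R}\mathcal M$ follows because the vector of $m$ gaps is a function of the transversal to the horocycle flow, a set of dimension $\dim\mathcal M-1<m$. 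Its law is therefore singular, or one finds a positive-mass subvariety.

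Strict positivity is dropped because, for example, for Veech-orbit measures small gaps can be forbidden; this is the known gap at zero for lattice surfaces.

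\textbf{Main obstacle.} I expect the obstacle to be Step 1's boundary control for general $\mu$. One must verify that $\mu$-almost surely no saddle connection lies on $\partial V$, and the thin-window estimates must work using only \ref{ass:circle-limit} and Siegel--Veech rather than the stronger Masur--Veech-specific tools.
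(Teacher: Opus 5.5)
Your Steps 1 and 2 follow the paper's argument: circle averages at $t=2\log R$, thin symmetric-difference regions controlled by the Siegel--Veech formula, then intensity convergence and Kallenberg's Palm convergence theorem. Two small corrections apply there. For general $\mu$ parallel saddle connections occur, so everything must be built from $\Lambda^{\mathrm{red}}_\omega$, and the intensity is $c^{\mathrm{red}}(\mu)/2$. Also, the almost-everywhere quadratic growth of $D_R(\omega)$ is Eskin--Masur's theorem, which needs $L^{1+\varepsilon}$ integrability of the unbounded Siegel--Veech transforms; it is not a consequence of $(\ast)$ for bounded $f$.

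\textbf{The genuine gap is the Lipschitz estimate in Step 3.} Part (1) depends on it too, because you invoke continuity of $G_{\mathcal M;j_1,\ldots,j_m}$ to get convergence at every $t$. Stationarity, Palm inversion and a first-moment bound on points in thin intervals cannot give continuity of a gap distribution. The process $\mathbb Z+U$, with $U$ uniform on $[0,1)$, has all these properties, yet its gap distribution is $\boldsymbol 1_{[1,\infty)}$. Concretely, Palm inversion gives $p_0(\sigma)=\lambda_{\mathcal M}\int_{2\sigma}^\infty(1-G_{\mathcal M;0}(t))\,dt$. Thin-interval intensities therefore control only the Lipschitz constant of void probabilities such as $p_0$, which is one derivative short of $G$. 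Bounding $P^0_{\mathcal M}(\Delta_0\in(t,t+h])$ by the Palm mean of $\zeta((t,t+h])$ instead requires a pair-correlation bound on pairs of saddle connections, which the first-moment Siegel--Veech formula does not supply.

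\textbf{The missing idea is to use $g_r$-invariance of $\mu$.} Write $G_{\mathcal M;0}(t+h)-G_{\mathcal M;0}(t)=\lambda_{\mathcal M}^{-1}E_{\mathcal M}\bigl[\int_0^1 N_{t,h}(g_r\omega)\,dr\bigr]$, where $N_{t,h}$ counts slopes in $(0,1)$ followed by a gap in $(t,t+h]$.
\begin{itemize}
\item Slopes scale by $e^{-r}$, so a pair that is consecutive at some time has its gap in $(t,t+h]$ only for $r$ in a set of length $\log(1+h/t)\le h/t$.
\item Map each pair that is not initially consecutive to the in-between vector that exits $V$ last. This map is injective, so at most $2\#(\Lambda^{\mathrm{red}}_\omega\cap K_{t,h})$ pairs ever become consecutive, where $K_{t,h}=\{0<x<1,\ 0\le y/x\le e(1+t+h)\}$.
\item Only the first-moment Siegel--Veech formula is then needed. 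The constant $2e(2+1/a)$ is independent of $\mu$ because $c^{\mathrm{red}}(\mu)/\lambda_{\mathcal M}=2$, not because Siegel--Veech constants are universal.
\end{itemize}

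Your sketch of absolute continuity for $m=2$ also misses the mechanism. Slope differences are invariant under $h_s$, so disintegrating along the horocycle direction yields nothing. What is needed is that $\omega\mapsto(d_{\alpha_1,\beta_1},d_{\alpha_2,\beta_2})$ is a submersion on the set where the four slopes satisfy $a<b\le c<d$. The paper checks this along the $g_t$ and $u_r$ directions. Writing $\delta_i:=d_{\alpha_i,\beta_i}(\omega)$, the Jacobian there is $-\begin{pmatrix}\delta_1 & \delta_1(a+b)\\ \delta_2 & \delta_2(c+d)\end{pmatrix}$. Its determinant $\delta_1\delta_2(c+d-a-b)$ is positive precisely because the two gaps occupy disjoint slope intervals. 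Combining this with the refined Campbell formula and a countable union over relative homology classes gives absolute continuity.

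Your argument for $m\ge\dim_{\mathbb R}\mathcal M$ agrees with the paper's: the $h_s$ direction lies in the kernel, so the rank is at most $\dim_{\mathbb R}\mathcal M-1<m$ and Sard applies. This needs the fact that the gap vector is locally one of countably many smooth slope-difference maps in period coordinates. An arbitrary function on a lower-dimensional transversal would not do.
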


When $\mathcal M$ is the orbit of a Veech surface, the conclusions can be sharpened further: the joint gap distribution functions are globally Lipschitz on $[0,\infty)^m$, and the absolute continuity question is determined for every $m\in\mathbb N$; see Remark~\ref{rmk:Veech-gap-distributions}. On the other hand, it follows from~\cite[Lemma~3.6]{AthreyaChaika2012} that $G_{\mathcal{M};\,j_1,\ldots,j_m}$ vanishes in a neighborhood of the origin. Hence, the strict positivity assertion cannot be extended to arbitrary ergodic measures.

\subsection{Large genus asymptotics}

We next study how these point processes and gap distributions behave as the genus tends to infinity. 

Our study is motivated by two related developments. One is that recent work has begun to uncover the geometry of random translation surfaces in large genus. 
Explicit upper bounds for the expected covering radius of translation surfaces in terms of the genus are given in~\cite{MasurRafiRandecker2022}.
Delecroix--Goujard--Zograf--Zorich have a series of works related to the large genus geometry of square-tiled surfaces and associated counting problems; see, for example, \cite{DelecroixGoujardZografZorich2021,DelecroixGoujardZografZorich2022,DelecroixGoujardZografZorich2026}.
Masur, Rafi, and Randecker~\cite{masur2025lengthssaddleconnectionsrandom} studied the limiting distribution of the number of short saddle connections on random translation surfaces. Bowen, Rafi, and Vallejos~\cite{BRV} announced a Benjamini--Schramm limit for translation surfaces as genus tends to infinity. Forni, Rafi, and Zorich recently announced ongoing work on the geometry of random quadratic differentials in large genus and in genus zero with a large number of poles.

Another is that Athreya~\cite{Athreya2016Gap} raised the question of comparing gap distributions for saddle connection directions with those of independent random directions. The normalized gaps for independent uniformly distributed directions have an exponential limiting distribution. The explicitly computed gap distributions for translation surfaces are non-exponential; see \cite{Al25,AthreyaChaikaLelievre2015,AthreyaCheung2014,BMMUW2023,MY25,UyanikWork2016} for some examples in the lattice case and \cite{Sanchez2022} for a non-lattice example. See also \cite{BS26} where they prove that the gap distribution of a generic genus 2 surface is non-exponential by analyzing the tail.

Our large genus asymptotics show that rescaled saddle connection slope gaps become asymptotically independent and exponentially distributed as the genus tends to infinity. For each $g\ge 2$, let $\mathcal H_g:=\mathcal H_g^1(1^{2g-2})$ be the area-one locus of the principal stratum in genus $g$. Write
\[
\xi_g:=\xi_{\mathcal H_g},\qquad G_{g;\,j_1,\ldots,j_m}:=G_{\mathcal H_g;\,j_1,\ldots,j_m}
\]
for the slope point process and its finite-dimensional gap distribution.

\begin{theorem}\label{thm:large-genus-asymptotics}
As $g\to\infty$,
\[
    g^2\xi_g\xrightarrow{d}\Pi_8,
\]
where $\Pi_8$ denotes the Poisson point process on $\mathbb R$ with intensity $8$. Moreover, for every $m\in\mathbb N$ and every collection of distinct indices $j_1,\ldots,j_m\in\mathbb Z$,
\[
    \lim_{g\to\infty}
    \sup_{t_1,\ldots,t_m\ge0}
    \left|
    G_{g;\,j_1,\ldots,j_m}(t_1,\ldots,t_m)
    -
    \prod_{\ell=1}^m
    \left(1-e^{-8g^2t_\ell}\right)
    \right|
    =
    0.
\]
\end{theorem}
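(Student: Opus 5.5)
The plan is to work entirely with the law $P_{\mathcal H_g}$ of the slope point process. First I prove the Poisson convergence $g^2\xi_g\xrightarrow{d}\Pi_8$. Then I pass to Palm distributions through the identification of $G_{g;\,j_1,\ldots,j_m}$ with $P^0_{\mathcal H_g}$ stated after Theorem~\ref{thm:joint-gap-distributions}.

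\textbf{Convergence of intensities.} For a bounded interval $B\subset\mathbb R$, the count $g^2\xi_g(B)$ is the number of holonomy vectors $v\in\Lambda_\omega$ in the triangle
\[
T_g(B):=\{(x,y):0<x<1,\ y/x\in g^{-2}B\}.
\]
Its Lebesgue area is $|B|/(2g^2)$. By the Siegel--Veech formula, $\mathbb E[g^2\xi_g(B)]=c_{sc}(\mathcal H_g)\,|B|/(2g^2)$, where $c_{sc}$ is the saddle connection Siegel--Veech constant of the principal stratum. I would then invoke the known large-genus asymptotics of Siegel--Veech constants (Aggarwal; Masur--Rafi--Randecker). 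In $\mathcal H_g$, saddle connections joining distinct simple zeros dominate, and each ordered pair of zeros contributes asymptotically a constant. This gives $c_{sc}(\mathcal H_g)\sim 16g^2$, so the intensity tends to $8|B|$. This is the step where the constant $8$ is fixed, and I would double-check it against the normalization of $\mu_{\mathcal C}$.

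\textbf{Reduction to short saddle connections.} By Kallenberg's criterion for simple point processes, it suffices to add one more fact to the intensity convergence. For every finite union $B$ of bounded intervals, I need $P(g^2\xi_g(B)=0)\to e^{-8|B|}$, or more robustly, that the factorial moments satisfy $\mathbb E[(g^2\xi_g(B))_k]\to(8|B|)^k$. To bring this into a known regime, I use the $\mathrm{SL}(2,\mathbb R)$-invariance of $\mu_{\mathcal H_g}$. Applying $g_{-t}$ with $e^{t}=g^2$ maps $T_g(B)$ onto
\[
\{(x,y):0<x<1/g,\ y/x\in B\}.
\]
This is a union of triangles of diameter $O(1/g)$ and area $|B|/(2g^2)$, and the joint law of the counts is unchanged. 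So the question becomes the Poisson behaviour of counts of short saddle connections, of length $O(1/g)$, in prescribed bounded regions. Masur--Rafi--Randecker proved exactly this for discs (the number of saddle connections of length $\le \epsilon/g$, suitably normalized, is asymptotically Poisson). I would rerun their argument for general bounded regions $A$ of the plane scaled by $1/g$.

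\textbf{Main obstacle: factorial moments.} I expect the core difficulty to be proving, via higher Siegel--Veech transforms (counting $k$-tuples of saddle connections), that
\[
\mathbb E\Bigl[\#\{\text{distinct }k\text{-tuples in }(g^{-1}A)^k\}\Bigr]\to\prod(\text{individual intensities}).
\]
This requires showing that configurations where the $k$ saddle connections interact have negligible contribution as $g\to\infty$. Such configurations include shared endpoints forming small cylinders, homologous or parallel saddle connections, and degenerate multi-component splittings. I would first try to bound these via the Eskin--Masur--Zorich description of degenerations together with large-genus volume asymptotics of boundary strata, each interaction costing a factor $O(1/g)$. If the moment estimate fails uniformly, I would fall back on showing void probabilities directly.

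\textbf{Palm distributions and gaps.} Each $g^2\xi_g$ is stationary, since $\xi_{\mathcal H_g}$ is invariant under the horocycle action. Its intensity converges to that of $\Pi_8$, and it converges in distribution to $\Pi_8$. A standard result (Kallenberg) then gives convergence of the Palm distributions of $g^2\xi_g$ to the Palm distribution of $\Pi_8$. By Slivnyak's theorem, the latter is $\Pi_8+\delta_0$. Its gaps $x_{j+1}-x_j$ are i.i.d.\ exponential with rate $8$.

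The gaps of $g^2\zeta$ are $g^2$ times those of $\zeta$. Hence the joint gap distribution functions of the rescaled process are $G_{g;\,j_1,\ldots,j_m}(t_1/g^2,\ldots,t_m/g^2)$. These converge pointwise to $\prod_\ell(1-e^{-8t_\ell})$; this requires continuity of the gap functional at $\Pi_8+\delta_0$-almost every configuration, which holds because the points are a.s.\ distinct. The limit is continuous, so Pólya's theorem upgrades pointwise convergence of distribution functions to uniform convergence on $[0,\infty)^m$. Substituting $t_\ell\mapsto g^2t_\ell$ yields the stated sup bound with $\prod_\ell(1-e^{-8g^2t_\ell})$.
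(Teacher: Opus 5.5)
Your proposal is correct and follows essentially the same route as the paper. The paper's four steps are: intensity convergence from the large-genus Siegel--Veech asymptotics ($c(\mathcal H_g)\sim 16g^2$); the $g_t$-rescaling of $T_{g^{-2}I}$ to $g^{-1}T_I$; a Masur--Rafi--Randecker-type Poisson limit (intensity $16$) for holonomy vectors in arbitrary bounded regions of $H$; and Kallenberg's Palm convergence, the i.i.d.\ $\mathrm{Exp}(8)$ Poisson Palm gaps, and the multivariate P\'olya theorem.

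The differences are minor. The paper gets the joint finite-dimensional laws directly rather than through a void-probability criterion. It runs MRR's simultaneous-collapsing moment computation only on the generic subset $\mathcal H_{\mathrm{Gen}}$, with the orientation factor $2^{K}$ dropped since the regions lie in $H$. It then discards the non-generic part via $\mu_g(\mathcal H_{\mathrm{NG}})\to 0$, so it never needs to control factorial moments on all of $\mathcal H_g$.
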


The scaling by $g^2$ is chosen so that the intensity of the rescaled slope point process converges to a finite nonzero limit. Here, the intensity is the expected number of points in a unit interval.

Thus, although in all known cases of fixed genus the exponentially distributed behavior of gaps has not been observed, it emerges as $g\to\infty$.

The influential work of Mirzakhani--Petri \cite{MirzakhaniPetri2019} proved that the length spectrum of a random hyperbolic surface (sampled by Weil-Petersson volume) converges to a Poisson point process as genus tends to infinity.  Later Masur--Rafi--Randecker \cite{masur2025lengthssaddleconnectionsrandom}, whose work inspired this line of inquiry, considered a similar question for random translation surfaces of large genus. In combination with Theorem~\ref{thm:large-genus-asymptotics}, these works indicate that various geometric quantities such as lengths or slopes of saddle connections de-correlate as the genus tends to infinity.

The passage from the point process limit to the joint gap asymptotics uses the corresponding convergence of Palm distributions. The same strategy also gives point process and joint gap limits for the other families of strata considered by~\cite{masur2025lengthssaddleconnectionsrandom}; see Section~\ref{subsec:other-strata-pp} and Section~\ref{subsec:other-strata-gap}.

\subsection{Related Literature}

Our work uses the rich geometry of the moduli space of translation surfaces and point processes to obtain information about the fine-scale behavior of directions and slopes of saddle connections on these surfaces. The study of using point processes in geometric contexts such as hyperbolic surfaces or the space of lattices is vast and deep. We mention some other results in the literature that use point processes to obtain geometric and number theoretic information.  

 Budzinski--Curien--Petri \cite{BudzinskiCurienPetri2025} consider a Poisson point process on hyperbolic surfaces to partition the surface into Voronoi regions to prove there is a gap from 1 for the  Cheeger constant of large genus hyperbolic surfaces. This shows there are limits to using Cheeger's inequality to prove that a closed hyperbolic surface of large genus has an optimal spectral gap.
 
Marklof--Welsh \cite{MarklofWelsh2023} use a line process on the hyperbolic plane to establish limit laws for the distribution in small intervals of the roots of certain quadratic congruence equations. Marklof \cite{Marklof2026} uses planar point processes whose directions encode the fractional parts of $\sqrt{n}$. The gap distribution of the fractional parts of $\sqrt{n}$ was investigated in Elkies--McMullen \cite{ElkiesMcMullen2004} using unipotent flows on the space of affine lattices. Marklof--Str{\"o}mbergsson \cite{MarklofStrombergsson2010} also used homogeneous dynamics to study the distribution of visible affine lattice points.
  
$\textrm{SL}(2,\mathbb R)$- invariant point processes are used in Athreya--Ghosh \cite{AthreyaGhosh2018} to understand probabilistic Diophantine approximation questions. An axiomatic framework is developed in Athreya \cite{Athreya2024} using $\textrm{SL}(2,\mathbb R)$-invariant point processes that recovers Athreya--Ghosh \cite{AthreyaGhosh2018} and applies to various counting problems in homogeneous dynamics and flat surfaces. This is in the spirit of Proposition~\ref{prop:joint-count-convergence}.

\subsection{Proof strategy}

We work more generally with an ergodic $\mathrm{SL}(2,\mathbb R)$-invariant probability measure $\mu$ on $\mathcal H^1(\kappa)$. Assuming that $\mu$ satisfies~\ref{ass:circle-limit}, we identify an underlying point process convergence behind Theorem~3.2 of Athreya--Chaika \cite{AthreyaChaika2012}; see Section~\ref{subsec:fixed-stratum-convergence}. This viewpoint makes it possible to leverage point process theory in this context.

To pass from the slope point process to gap statistics, we introduce the Palm distribution, a standard tool in point process theory. We first reformulate the convergence~\eqref{equ:gap-distribution-existence} in terms of Palm distributions. We obtain convergence of the Palm distribution by utilizing geometry and dynamics on the moduli space of translation surfaces. In particular, we use circle averages to obtain information about the finite dimensional distributions of the point process and Siegel--Veech theory for the intensities. Then a general convergence theorem for Palm distributions transfers the point process convergence to the convergence of the corresponding finite-dimensional gap distributions, giving Theorem~\ref{thm:joint-gap-distributions}(1). 
As a result, these gap distributions can be expressed as expectations of certain saddle connection counting functions over the stratum; see~\eqref{equ:rephrase-slope-gap-distribution}. 

We use this expression to study the regularity of gap distributions. For the Lipschitz estimates, we rewrite the expression as the expectation of an averaged counting function along the Teichm\"uller geodesic flow. A careful estimate of this averaged counting function then gives the desired Lipschitz bound.
For absolute continuity in dimension $m=2$, we work in local period coordinates and consider the maps defined by differences of slopes of saddle connections. The submersion properties of these maps yield absolute continuity. For the failure of absolute continuity in sufficiently large dimensions, we use the slope difference maps in the opposite direction: their images are contained in subsets of Lebesgue measure zero.
These prove Theorem~\ref{thm:joint-gap-distributions}(2)--(3).

We follow a similar approach for the large genus asymptotics. We first analyze the underlying convergence of the slope point processes. Using the Teichm\"uller flow, we renormalize the saddle connections contributing to the slope point processes to the length scale $1/g$. This brings us to the setting studied by Masur, Rafi, and Randecker~\cite{masur2025lengthssaddleconnectionsrandom}, who studied the limiting distributions of lengths of saddle connections at this scale as $g\to\infty$. We generalize their argument to obtain a rescaled convergence of point processes associated with holonomy vectors; see Theorem~\ref{thm:holonomy-vectors-ppp}. Together with the large genus asymptotics of the relevant Siegel--Veech constants, we can then apply Palm distributions once more to obtain convergence of the finite-dimensional gap distributions, yielding Theorem~\ref{thm:large-genus-asymptotics}.

\subsection*{Disclosure of AI use}
The first author used ChatGPT (GPT-5.6 Sol, OpenAI) for mathematical discussions. 
It suggested proof strategies and helped develop the arguments for the regularity of gap distributions in Proposition~\ref{prop:one-dimensional-Lipschitz} and Proposition~\ref{prop:absolutely-continuous}.
It also assisted in adapting an argument from~\cite{AthreyaChaika2012} to prove Lemma~\ref{lem:positivity-gap-distributions} and in the derivation in Remark~\ref{rmk:connection-AC}. 

ChatGPT was also used for literature searches in point process theory and for language editing. In particular, the first author learned of Palm distribution theory through these searches. Its applications in this paper were developed by the authors. All AI-identified references were checked against the original sources before citation.

The main research questions and the overall approach of the paper were developed by the authors. The authors completed all proofs, wrote the manuscript, and take full responsibility for its content.

\subsection*{Acknowledgments}

This work began at the Clay Mathematics Institute and the Heilbronn Institute for Mathematical Research (CMI-HIMR) summer school on ``Random Geometries and Random Matrices" at the University of Bristol. We thank CMI-HIMR, the program organizers, and the University of Bristol for  their hospitality and for providing ideal working conditions. The authors thank Anja Randecker, Ivan Yakovlev, and Anton Zorich for helpful discussions at an early stage of this project and Jayadev Athreya for comments on the paper.
 
\section{Preliminaries}

\subsection{Translation surfaces and their strata}\label{subsec:translation-surfaces}

For general background on translation surfaces and Teichm\"uller dynamics, see, for example,~\cite{Zorich2006,ForniMatheus2014,Wright2015,AthreyaMasur2024}.

A \emph{translation surface} is a triple $(X,z,\omega)$, where $X$ is a compact Riemann surface, $\omega$ is a nonzero Abelian differential, and $z=(z_1,\ldots,z_n)$ is the ordered set of its zeros. Equivalently, it is an oriented closed surface obtained from finitely many polygons by pairwise identifications of edges by translations.

Away from $z$, the metric $|\omega|^2$ is a flat Riemannian metric. Integrating $\omega$ locally gives \emph{translation charts} to $\mathbb C$, in which $\omega=dw$ and the transition maps are translations. If $z_i$ is a zero of order $k$, then locally $\omega=w^k\,dw$, so $z_i$ is a cone point of angle $2(k+1)\pi$. The area is computed by $\frac{i}{2}\int_X\omega\wedge\overline{\omega}$. We often denote a translation surface $(X,z,\omega)$ simply by $\omega$.

A \emph{saddle connection} $\gamma$ on $\omega$ is the image of a path $p:[a,b]\to X$ such that $p(a),p(b)\in z$ and $p|_{(a,b)}$ is a geodesic in $X\setminus z$ with respect to $|\omega|^2$. Each saddle connection has two orientations, and a choice of parameterization determines an orientation. The \emph{holonomy vector} of an oriented saddle connection $\gamma$ is $\int_\gamma\omega\in\mathbb C$, where the integral is computed using any parameterization inducing the given orientation.

Let $\Lambda_\omega\subset\mathbb C$ denote the set of holonomy vectors of all oriented saddle connections on $\omega$. Then $\Lambda_\omega$ is discrete and $\Lambda_\omega=-\Lambda_\omega$.

Given $g\ge 1$ and $n\ge0$, let $\Omega_{g,n}$ be the moduli space of equivalence classes of genus $g$ translation surfaces with $n$ labeled points. Two translation surfaces $(X,z,\omega)$ and $(X',z',\omega')$ are equivalent if there exists a biholomorphic map $f:X\to X'$ such that $f(z_i) = z'_i$ for each $i$ and $f^\ast\omega' = \omega$.

For $(X,z,\omega)\in\Omega_{g,n}$, let $\kappa=(k_1,\ldots,k_n)\in\mathbb Z_{>0}^n$, where $k_i$ is the order of $\omega$ at $z_i$. Since $\sum_{i=1}^nk_i = 2g-2$, we have
\[
\Omega_{g,n} = \bigsqcup_\kappa \mathcal{H}(\kappa),
\]
where $\kappa$ ranges over all $n$-tuples of positive integers summing to $2g-2$. The \emph{stratum} $\mathcal{H}(\kappa)$ consists of translation surfaces whose zero $z_i$ has order $k_i$ for each $i$. We denote its area-one locus by $\mathcal{H}^1(\kappa)$.

The \emph{principal stratum} is the stratum $\mathcal H(1^{2g-2})$, where $1^{2g-2}$ denotes the tuple $(1,\ldots,1)$ with $2g-2$ entries.

We recall period coordinates on $\mathcal H(\kappa)$. Let $(S,\Sigma)$ be a closed oriented surface of genus $g$ with $n$ labeled marked points. For a sufficiently small neighborhood $U$ of $\omega_0\in\mathcal H(\kappa)$, trivialize the relative homology local system over $U$ with fiber $H_1(S,\Sigma;\mathbb Z)$. Define the \emph{period map}
\[
    \Phi:U\longrightarrow H^1(S,\Sigma;\mathbb C),
    \qquad
    \Phi(\omega)(\alpha)=\int_\alpha\omega,
\]
for $\alpha\in H_1(S,\Sigma;\mathbb Z)$. After shrinking $U$ if necessary, $\Phi$ is a coordinate chart, called a \emph{period coordinate chart}. In particular, $\dim_{\mathbb C}\mathcal H(\kappa)=2g+n-1$.

Since $\mathcal H(\kappa)$ is in general an orbifold, we replace $U$, whenever necessary, by a local finite cover on which the relative homology local system is trivial.

\subsection{\texorpdfstring{$\mathrm{SL}(2,\mathbb R)$}{SL(2,R)}-action and affine invariant submanifolds}\label{subsec:AIS}

The group $\mathrm{GL}^+(2,\mathbb R)$ acts on each stratum $\mathcal H(\kappa)$: given $A\in\mathrm{GL}^+(2,\mathbb R)$ and $\omega\in\mathcal H(\kappa)$, define $A\cdot\omega$ to be the translation surface obtained by post-composing the translation charts on $X$ with the linear map $A$. Equivalently, if $\omega$ is represented as a collection of polygons with edge identifications, then $A\cdot \omega$ is obtained by applying the linear map $A$ to each polygon while maintaining the same edge identifications.

The area-one locus $\mathcal H^1(\kappa)$ is invariant under the $\mathrm{SL}(2,\mathbb R)$-action. Masur and Veech~\cite{Masur1982,Veech1982} independently showed that each $\mathcal H^1(\kappa)$ carries a canonical finite $\mathrm{SL}(2,\mathbb R)$-invariant measure, now known as \emph{Masur--Veech measure}.

A stratum may have several connected components, classified by Kontsevich and Zorich~\cite{KontsevichZorich2003}. Throughout, we use $\mathcal C$ to denote a connected component of $\mathcal H^1(\kappa)$, and let $\mu_{\mathcal C}$ be the restriction of the Masur--Veech measure to $\mathcal C$, normalized to be a probability measure.

The $\mathrm{SL}(2,\mathbb R)$-action is ergodic with respect to the Masur--Veech measure on each connected component of $\mathcal{H}^1(\kappa)$. Two subgroups that will play an important role are the \emph{geodesic} and \emph{horocyclic} subgroups 
\[
A = \left\{g_t = \begin{pmatrix}
e^{t/2} & 0 \\ 0 & e^{-t/2} \\
\end{pmatrix} : t \in \mathbb R\right\} 
\qquad\text{and}\qquad
N = \left\{h_s = \begin{pmatrix}
1 & 0 \\ -s & 1 \\
\end{pmatrix} : s \in \mathbb R\right\}\text.
\]
The corresponding actions are called the \emph{geodesic flow} and the \emph{horocycle flow}, respectively. We will refer to them simply as $(g_t)$ and $(h_s)$.

We will work in the more general setting of ergodic $\mathrm{SL}(2,\mathbb R)$-invariant probability measures. This leads to the notion of an affine invariant submanifold. A subset $\mathcal M\subset\mathcal H^1(\kappa)$ is called an
\emph{affine invariant submanifold} if
\[
    \mathbb R\mathcal M
    :=
    \{r\omega:r\in\mathbb R\setminus\{0\},\ \omega\in\mathcal M\}
    \subset\mathcal H(\kappa)
\]
is a properly immersed connected suborbifold such that, in every sufficiently small period coordinate chart, its image is a finite union of linear subspaces defined by homogeneous linear equations with real coefficients.

By the work of Eskin--Mirzakhani~\cite{EskinMirzakhani2018} and Eskin--Mirzakhani--Mohammadi~\cite{EskinMirzakhaniMohammadi2015}, every ergodic $\mathrm{SL}(2,\mathbb R)$-invariant probability measure is an affine measure supported on an affine invariant submanifold, and every $\mathrm{SL}(2,\mathbb R)$-orbit closure is an affine invariant submanifold.

Throughout, $\mu$ denotes an ergodic $\mathrm{SL}(2,\mathbb R)$-invariant probability measure, and we write
\[
    \mathcal M := \operatorname{supp}\mu.
\]

In particular, every connected component of $\mathcal{H}^1(\kappa)$ is an affine invariant submanifold. Another example comes from Veech surfaces. A \emph{Veech surface} is a translation surface $(X,\omega)$ whose stabilizer in $\mathrm{SL}(2,\mathbb R)$ is a lattice. Its $\mathrm{SL}(2,\mathbb R)$-orbit is closed and is an affine invariant submanifold.

\subsection{Siegel--Veech theory}\label{subsec:Siegel-Veech}

We first recall the Siegel--Veech formula introduced by Veech~\cite{Veech1998}. 

Let $B_c(\mathbb R^d)$ denote the space of Borel measurable bounded compactly supported functions on $\mathbb R^d$. We denote Lebesgue measure on $\mathbb R^d$ by $\mathrm{Leb}_d$, and simply write $\mathrm{Leb}$ when $d=1$.

Let $\mu$ be an ergodic $\mathrm{SL}(2,\mathbb{R})$-invariant probability measure on $\mathcal{H}^1(\kappa)$ and let $\mathcal{M}$ be its support. For $f\in B_c(\mathbb R^2)$, the \emph{Siegel--Veech transform} of $f$ is defined by
\[
    \widehat f(\omega)
    :=
    \sum_{v\in\Lambda_\omega} f(v),
\]
for $\omega\in\mathcal{H}^1(\kappa)$. Since $\Lambda_\omega$ is discrete, the sum is finite.

\begin{theorem}[Siegel--Veech formula {\cite{Veech1998}}]\label{thm:Siegel-Veech-formula}
There exists a constant $c(\mu)$ such that
\begin{equation}\label{equ:Siegel-Veech-formula}
    \int_{\mathcal H^1(\kappa)}
    \widehat f(\omega)\,d\mu(\omega)
    =
    c(\mu) \int_{\mathbb R^2} f(v)\,d\operatorname{Leb}_2(v)
\end{equation}
for every $f\in B_c(\mathbb R^2)$.
\end{theorem}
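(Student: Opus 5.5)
The plan is Veech's original argument: show that $f\mapsto \int \widehat f\,d\mu$ is a finite, $\mathrm{SL}(2,\mathbb R)$-invariant Radon measure on $\mathbb R^2\setminus\{0\}$. Then use the classification of such measures.

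\textbf{Step 1: a positive functional.} For $f\in B_c(\mathbb R^2)$ with $f\ge0$, set
\[
\nu(f):=\int_{\mathcal H^1(\kappa)}\widehat f\,d\mu .
\]
This is a positive linear functional. It is finite, because the Siegel--Veech transform of a compactly supported bounded function is in $L^1(\mu)$. To see this, bound $\widehat{\mathbf 1_{B(0,r)}}(\omega)$ by $C_r/\ell(\omega)^{2}$, where $\ell(\omega)$ is the length of the shortest saddle connection; this is the standard Eskin--Masur/Veech estimate. It then suffices that $\ell^{-2}\in L^{1}(\mu)$. Equivalently, the $\mu$-measure of the set where $\ell<\epsilon$ is $O(\epsilon^2)$, which holds for ergodic invariant measures on affine invariant submanifolds.

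Since $0\notin\Lambda_\omega$, the functional $\nu$ defines a locally finite Borel measure on $\mathbb R^2\setminus\{0\}$. Set $\nu(\{0\})=0$, which is consistent with $\widehat f$ ignoring the value $f(0)$.

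\textbf{Step 2: invariance.} For $A\in\mathrm{SL}(2,\mathbb R)$ we have $\Lambda_{A\omega}=A\Lambda_\omega$. Hence $\widehat{f\circ A}(\omega)=\widehat f(A\omega)$. By $\mu$-invariance,
\[
\nu(f\circ A)=\int\widehat f(A\omega)\,d\mu=\nu(f),
\]
so $\nu$ is an $\mathrm{SL}(2,\mathbb R)$-invariant measure on $\mathbb R^2\setminus\{0\}$.

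\textbf{Step 3: classification.} The group $\mathrm{SL}(2,\mathbb R)$ acts transitively on $\mathbb R^2\setminus\{0\}$. The stabilizer of $e_1$ is the unipotent group $N'$ of upper triangular unipotent matrices, which is unimodular. So $\mathbb R^2\setminus\{0\}\cong \mathrm{SL}(2,\mathbb R)/N'$, and invariant Radon measures on this homogeneous space are unique up to scalar. Lebesgue measure is invariant, so $\nu=c(\mu)\,\mathrm{Leb}_2$ on $\mathbb R^2\setminus\{0\}$, and therefore on $\mathbb R^2$.

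This gives the formula first for nonnegative $f$. For general bounded $f$, split into positive and negative parts; both sides are finite by Step 1.

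\textbf{Main obstacle.} The main obstacle is the integrability in Step 1. Invariance and uniqueness are formal, but finiteness needs a quantitative control of short saddle connections. I would first reduce it to a counting bound: the number of saddle connections of length at most $r$ is $\le C_r\,\ell(\omega)^{-2}$, proved via a Masur-type argument using flat geometry and cylinders. The bound $\mu(\ell<\epsilon)=O(\epsilon^2)$ would then come from Eskin--Masur for $\mu_{\mathcal C}$. For general $\mu$ I would use the Avila--Matheus--Yoccoz type integrability result for affine measures, or the analogous Eskin--Masur argument on the affine submanifold.
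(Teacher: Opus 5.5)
Your Steps~2 and~3 are exactly Veech's argument, and they are fine. The paper gives no proof here and simply cites \cite{Veech1998}. The identity $\widehat{f\circ A}=\widehat f\circ A$ together with $\mu$-invariance makes $\nu$ an $\mathrm{SL}(2,\mathbb R)$-invariant measure on $\mathbb R^2\setminus\{0\}\cong\mathrm{SL}(2,\mathbb R)/N'$. Uniqueness of invariant Radon measures there then gives $\nu=c(\mu)\operatorname{Leb}_2$.

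The gap is in Step~1, where you located the difficulty: the integrability argument you propose is wrong. The tail bound $\mu(\ell<\epsilon)=O(\epsilon^2)$ is \emph{not} equivalent to $\ell^{-2}\in L^1(\mu)$; Markov's inequality only gives the reverse implication. By the layer-cake formula
\[
\int\ell^{-2}\,d\mu=\int_0^\infty\mu\bigl(\ell<u^{-1/2}\bigr)\,du,
\]
an $O(\epsilon^2)$ tail bounds the integrand only by $O(u^{-1})$. So you get $\ell^{-s}\in L^1(\mu)$ for $s<2$, but nothing at $s=2$.

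The borderline case genuinely fails. For $\mu_{\mathcal C}$ one also has $\mu_{\mathcal C}(\ell<\epsilon)\ge c\,\epsilon^2$: near the locus where a single saddle connection collapses, its holonomy $z$ is a free complex coordinate in which $\mu_{\mathcal C}$ is Lebesgue, and $\int_{0<|z|<\epsilon_0}|z|^{-2}\,d\operatorname{Leb}_2(z)=\infty$. Hence $\ell^{-2}\notin L^1(\mu_{\mathcal C})$. A pointwise bound $\widehat{\mathbf 1_{B(0,r)}}\le C_r\,\ell^{-2}$, even if granted, therefore cannot show that $\nu$ is locally finite. Without local finiteness, the uniqueness theorem in Step~3 does not apply.

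You need a sharper input. The standard one is Eskin--Masur's estimate from \cite[Section~5]{EskinMasur2001}, which is also what the paper relies on when verifying condition~$(C_\mu)$ in the proof of Lemma~\ref{lem:SV-quadratic-short-holonomy-vectors}. Using a Margulis-type function whose circle averages contract under $g_t$, they show that $\omega\mapsto\#(\Lambda_\omega\cap B(0,R))$ lies in $L^{1+\varepsilon}(\mu)$ for some $\varepsilon>0$.

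A pointwise bound with exponent strictly below~$2$, together with an independent $O(\epsilon^2)$ thin-part estimate, would also suffice. But for general $\mu$ the quickest proof of $\mu(\ell<\epsilon)=O(\epsilon^2)$ is Markov's inequality applied to the Siegel--Veech formula itself, which is circular. Avila--Matheus--Yoccoz regularity concerns pairs of non-parallel thin cylinders and does not supply this bound either.

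Once $\nu(U)<\infty$ for a single nonempty open $U\subset\mathbb R^2\setminus\{0\}$, invariance and transitivity give local finiteness on $\mathbb R^2\setminus\{0\}$. Finiteness near $0$ then follows a posteriori from $\nu=c(\mu)\operatorname{Leb}_2$ on $\mathbb R^2\setminus\{0\}$ and $\nu(\{0\})=0$.
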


The constant $c(\mu)$ is called the \emph{Siegel--Veech constant}. If $\mu$ is the Masur--Veech measure $\mu_\mathcal{C}$ on a connected component $\mathcal{C}$ of $\mathcal{H}^1(\kappa)$, we simply write $c(\mathcal C)$.

One motivation for Veech's introduction of the Siegel--Veech transform is to understand counting problems. Given $\omega$ and $R>0$, define
\[
N_R(\omega):=\#\left(\Lambda_\omega\cap B(0,R)\right),
\]
where $B(0,R)$ denotes the closed ball of radius $R$ centered at the origin. This should not be confused with $D_\omega(R)$ defined in the introduction, which counts the number of distinct directions of saddle connections of length at most $R$. In particular, $D_R(\omega)\le N_R(\omega)$.

Masur~\cite{Ma1,Ma2} established quadratic upper and lower bounds for $N_R(\omega)$. Eskin-Masur~\cite{EskinMasur2001} showed that for $\mu$-almost every $\omega\in\mathcal{H}^1(\kappa)$,
\[
\lim_{R\to\infty}\frac{1}{\pi R^2}N_R(\omega) = c(\mu).
\]

Eskin-Masur-Zorich~\cite{EskinMasurZorich2003} introduce a method for computing Siegel--Veech constants by relating them to volumes of strata.
The large genus behavior of Masur--Veech volumes and Siegel--Veech constants has been studied in many works; see, for example, \cite{ChenMollerZagier2018,Aggarwal2019,Aggarwal2020,ChenMollerSauvagetZagier2020,Randecker2025}.

We will use the asymptotic formula in~\cite[Equation~(24)]{Randecker2025}, but our convention differs from that in~\cite{Randecker2025}. Recall that $\Lambda_\omega=-\Lambda_\omega$, so both orientations of each holonomy vector are counted in the definition of $c(\mathcal C)$. In contrast, \cite{Randecker2025} counts saddle connections without orientation. Thus, the Siegel--Veech constant in~\cite[Equation~(24)]{Randecker2025} is $c(\mathcal C)/2$ in our notation, and the equation can be rewritten as
\begin{equation}\label{equ:Siegel-Veech-asymptotic}
    \frac{c(\mathcal C)}{2}
    =
    \frac{(2g + n -2)^2}{2}\cdot\left(1+O\left(\frac{1}{g}\right)\right).
\end{equation}

\subsection{Point processes}\label{subsec:point-process}

We recall the notions and results concerning point processes that will be used later. We largely follow the terminology and conventions of~\cite{DaleyVereJones2008,Kallenberg2017,LastPenrose2018Poisson}.

Throughout, for a locally compact second countable Hausdorff space $M$, we denote by $\mathcal{B}(M)$ its Borel $\sigma$-algebra and by $\mathcal{P}(M)$ the space of probability measures on $M$. We denote by $B_c(M)$ the space of bounded Borel measurable functions on $M$ with compact support.

A \emph{counting measure} on $M$ is a measure of the form $\sum_{i}\delta_{x_i}$, where $(x_i)$ is a finite or countably infinite sequence of points in $M$. Given an at most countable subset $Z\subset M$, we identify $Z$ with the counting measure $\sum_{x\in Z}\delta_x$.

A Borel measure $\zeta$ on $M$ is \emph{locally finite} if $\zeta(A)<\infty$ for every relatively compact Borel subset $A\subset M$. Similarly, an at most countable subset $Z\subset M$ is \emph{locally finite} if $\#Z\cap A<\infty$ for every relatively compact Borel subset $A\subset M$.

Let $\mathcal{N}(M)$ denote the set of locally finite counting measures on $M$. A counting measure $\zeta\in \mathcal N(M)$ is \emph{simple} if $\zeta(\{x\})\le 1$ for every $x\in M$. We denote by $\mathcal{N}_s(M)\subset\mathcal{N}(M)$ the set of locally finite simple counting measures. Under the above identification, every at most countable locally finite subset of $M$ determines an element of $\mathcal N_s(M)$.

We endow $\mathcal{N}(M)$ with the \emph{vague topology}. That is, $\zeta_n\to \zeta$ if and only if 
\[
\int_{M} f(x)\,d\zeta_n(x)\to \int_{M}f(x)\,d\zeta(x)
\]
for every compactly supported function $f\in C_c(M)$. Here and below, $d\zeta(x)$ denotes integration with respect to the counting measure $\zeta$. Thus, if $\zeta=\sum_i\delta_{x_i}$, then
\[
    \int_{M} f(x)\,d\zeta(x)
    =
    \sum_i f(x_i).
\]

Let $(\Omega,\mathcal F,\mathbb P)$ be a probability space. A \emph{point process} on $M$ is a random variable
\[
    \xi:(\Omega,\mathcal F)
    \longrightarrow
    \left(\mathcal N(M),
    \mathcal{B}(\mathcal N(M))\right).
\]
Its distribution is the probability measure
\[
    P_\xi:=\xi_\ast\mathbb P
    \in\mathcal P\left(\mathcal N(M)\right).
\]
Thus, for every $A\in \mathcal{B}(\mathcal N(M))$, $P_\xi(A)=\mathbb P(\xi^{-1}A)$. We write $E_\xi$ for expectation with respect to $P_\xi$.

A point process $\xi$ is called \emph{simple} if $P_\xi\left(\mathcal N_s(M)\right)=1$. 

When $M=\mathbb R^d$, for $x\in\mathbb R^d$, define
\begin{equation}\label{equ:translate-notation}
    (\tau_x\zeta)(B)
    :=
    \zeta\left(\tau_x(B)\right)
    =
    \zeta(B+x),
    \qquad
    B\in\mathcal B(\mathbb R^d).
\end{equation}
In particular, $\tau_x\delta_x=\delta_0$. 

A point process $\xi$ is said to be \emph{stationary} if $(\tau_x)_\ast P_\xi = P_\xi$ for every $x\in\mathbb R^d$. Equivalently, for every $x\in\mathbb R^d$ and $A\in \mathcal{B}(\mathcal{N}(\mathbb R^d))$,
\[
    P_\xi(\tau_x\zeta\in A) = P_\xi(A).
\]

Assume that $\xi$ is a stationary point process on $\mathbb R^d$ and $E_\xi\left[\zeta([0,1]^d)\right]<\infty$. Then, by~\cite[Proposition~8.2]{LastPenrose2018Poisson}, there exists $\lambda<\infty$ such that
\[
E_\xi[B] = \lambda\,\mathrm{Leb}_d(B)
\]
for $B\in\mathcal{B}\left(\mathbb R^d\right)$. We call $\lambda$ the \emph{intensity} of $\xi$. In particular, for every $L>0$,
\begin{equation}\label{equ:intensity-of-stationary-pp}
    \lambda = \frac{E_\xi\left[\zeta\left([0,L]^d\right)\right]}{L^d}.
\end{equation}

We next recall the definition of a Poisson point process. Let $M$ be a locally compact Borel subset of $\mathbb R^d$. For $\lambda>0$, a point process $\xi$ on $M$ is called a \emph{Poisson point process} with intensity $\lambda$ if, for every collection of pairwise disjoint relatively compact Borel sets $B_1,\ldots,B_m\subset M$, the random variables
\[ 
    \xi(B_1),\ldots,\xi(B_m) 
\] 
are independent and Poisson distributed with means $\lambda\,\mathrm{Leb}_d(B_1),\ldots, \lambda\,\mathrm{Leb}_d(B_m)$. We denote this process by $\Pi_\lambda^M$. Its intensity measure is $\lambda\,\mathrm{Leb}_d|_M$.

We next introduce convergence of point processes. Let $(\xi_n)_{n\in\mathbb N}$ and $\xi$ be point processes on $M$. We say that $\xi_n$ \emph{converges in distribution} to $\xi$ as $n\to\infty$, and write $\xi_n\xrightarrow{d}\xi$
if
\[
    E_{\xi_n}\left[F(\zeta)\right]
    \longrightarrow
    E_\xi\left[F(\zeta)\right],\qquad \text{as }n\to\infty,
\]
for every bounded continuous function $F:\mathcal N(M)\to\mathbb R$, where $\mathcal N(M)$ is equipped with the vague topology.

The following characterization will be used repeatedly. A Borel set $B\subset M$ is called a $\xi$-\emph{continuity set} if $\zeta(\partial B)=0$ for $P_\xi$-almost every $\zeta\in\mathcal N(M)$.

\begin{theorem}[{\cite[Definition~11.1.IV and Theorem~11.1.VII]{DaleyVereJones2008}}]\label{thm:characterization-point-process-convergence}
Let $M$ be a locally compact second-countable Hausdorff space, and let $(\xi_n)_{n\in\mathbb N}$ and $\xi$ be point processes on $M$. Then $\xi_n\xrightarrow{d}\xi$ if and only if, for every $m\in\mathbb N$, every collection of relatively compact $\xi$-continuity sets $B_1,\ldots,B_m\in\mathcal B(M)$ and every $r_1,\ldots,r_m\in\mathbb Z_{\geq 0}$,  
\[
    P_{\xi_n}\left(
    \zeta(B_1)=r_1,\ldots,\zeta(B_m)=r_m
    \right)\longrightarrow
    P_\xi\left(
    \zeta(B_1)=r_1,\ldots,\zeta(B_m)=r_m
    \right)
\]
as $n\to\infty$.
\end{theorem}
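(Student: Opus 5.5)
This is a cited result from Daley--Vere-Jones, so I will sketch the standard argument rather than anything paper-specific. The plan is to prove the two directions separately: the forward direction by the continuous mapping theorem, and the converse by tightness plus uniqueness of subsequential limits.

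\emph{Forward direction.} Suppose $\xi_n\xrightarrow{d}\xi$, and let $B_1,\ldots,B_m$ be relatively compact $\xi$-continuity sets. I first show that the map $\zeta\mapsto(\zeta(B_1),\ldots,\zeta(B_m))$ from $\mathcal N(M)$ to $\mathbb Z_{\ge0}^m$ is continuous at every $\zeta$ with $\zeta(\partial B_i)=0$ for all $i$. This follows from the portmanteau characterization of vague convergence. If $\zeta_k\to\zeta$ vaguely, then $\limsup\zeta_k(\overline{B})\le\zeta(\overline B)$ and $\liminf\zeta_k(B^\circ)\ge\zeta(B^\circ)$. Both bounds equal $\zeta(B)$ when $\zeta(\partial B)=0$, and the counts are integers, so they eventually agree with $\zeta(B)$. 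The set of discontinuity points therefore has $P_\xi$-measure zero. The continuous mapping theorem then gives convergence of the joint laws of the counts, hence of each atom probability.

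\emph{Converse direction.} I first establish tightness of $(P_{\xi_n})$ in $\mathcal P(\mathcal N(M))$. Fix an exhaustion of $M$ by relatively compact open sets $U_j$, chosen to be $\xi$-continuity sets. This is possible because for nested families such as metric balls $B(x_0,r)$, only countably many radii $r$ can have $\mathbb E\,\xi(\partial B(x_0,r))>0$. Convergence of the count laws makes each family $\{\xi_n(U_j)\}_n$ tight in $\mathbb Z_{\ge0}$. A set of the form $\{\zeta:\zeta(\overline{U_j})\le K_j\ \forall j\}$ is relatively compact in the vague topology. So Prohorov's theorem, which applies since $\mathcal N(M)$ is Polish, yields tightness.

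Next, let $\eta$ be any subsequential limit. By the forward direction, applied to sets that are continuity sets for both $\eta$ and $\xi$, the finite-dimensional count distributions of $\eta$ and $\xi$ agree on a class $\mathcal A$. Here $\mathcal A$ is the class of relatively compact sets that are continuity sets for both processes. It is closed under finite intersections and unions, and it generates $\mathcal B(M)$, again by the countable-exception argument for boundaries. The key identification step is that the laws of counts on such a generating ring determine the law of the process, via a monotone class or $\pi$-$\lambda$ argument on cylinder sets of $\mathcal N(M)$. Hence $\eta\overset{d}{=}\xi$, and every subsequence has a further subsequence converging to $\xi$.

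The main obstacle is making this uniqueness step rigorous. One must check that the cylinder sets built from $\mathcal A$ generate $\mathcal B(\mathcal N(M))$. I would show that the evaluation maps $\zeta\mapsto\zeta(B)$ for $B\in\mathcal A$ determine $\zeta(f)$ for $f\in C_c(M)$ by approximation, and that these evaluations generate the vague Borel $\sigma$-algebra.
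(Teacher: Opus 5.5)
Your proposal is correct and is essentially the standard argument behind the cited result; the paper gives no proof of its own and simply cites Daley--Vere-Jones, and your route is the one used there: the continuous mapping theorem for the forward direction, and for the converse, tightness from the counts on an exhaustion by $\xi$-continuity sets, followed by identification of subsequential limits through their finite-dimensional distributions on the generating ring of common continuity sets. Two small repairs: argue the countable-exception claim via $P_\xi\bigl(\zeta(\partial B(x_0,r))>0\bigr)>0$ and almost sure local finiteness (disjoint spheres inside a compact ball) rather than via expectations, since $\mathbb E\,\xi$ need not be locally finite; and note that Prohorov's theorem turns the tightness you constructed into relative compactness, not the other way round.
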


It is enough to consider pairwise disjoint sets $B_1,\ldots,B_m$ in the theorem.

\subsection{Palm distributions and gap sequences}\label{subsec:Palm-distribution}

We first introduce Palm distributions of point processes. For our purposes, we specialize to stationary point processes on $\mathbb R$. We begin with the refined Campbell theorem, which we use to define the Palm distribution.

\begin{theorem}[{Refined Campbell Theorem~\cite[Theorem~9.1]{LastPenrose2018Poisson}}]\label{thm:refined-Campbell-theorem}
    Let $\xi$ be a stationary point process on $\mathbb R$ with intensity $\lambda\in(0,\infty)$. Then there exists a unique probability measure $P^0_\xi\in \mathcal P(\mathcal{N}(\mathbb R))$ such that
    \begin{equation}\label{equ:refined-Campbell-theorem}
        E_\xi\left[\int_{\mathbb R} f(x,\tau_x\zeta)d\zeta(x)\right] = \lambda\int_{\mathbb R}\int_{\mathcal{N}(\mathbb R)} f(x,\zeta)\,dP^0_\xi(\zeta)\,dx
        \end{equation}
    for every non-negative measurable function $f:\mathbb R\times\mathcal{N}(\mathbb R)\to[0,\infty]$.
\end{theorem}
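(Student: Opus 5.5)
The plan is the standard construction of the Palm measure: disintegrate a translation-invariant measure on $\mathbb R$ against Lebesgue measure, then extend the resulting identity from indicator functions to all nonnegative $f$ by monotone class arguments.

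\emph{Step 1: measurability.} I would first check that the map $\mathbb R\times\mathcal N(\mathbb R)\to\mathcal N(\mathbb R)$, $(x,\zeta)\mapsto\tau_x\zeta$, is jointly measurable. The $\sigma$-algebra is generated by the evaluations $\zeta\mapsto\zeta(B)$, and $(x,\zeta)\mapsto\zeta(B+x)$ is measurable, first for intervals $B$ and then for all Borel $B$ by a Dynkin argument. Consequently, for every nonnegative measurable $f$, the integrand $\zeta\mapsto\int f(x,\tau_x\zeta)\,d\zeta(x)$ is measurable, so the left-hand side of \eqref{equ:refined-Campbell-theorem} makes sense.

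\emph{Step 2: construction.} Fix a measurable $A\subset\mathcal N(\mathbb R)$ and define a Borel measure on $\mathbb R$ by
\[
\nu_A(C):=E_\xi\Bigl[\int_{\mathbb R}\mathbf 1_C(x)\,\mathbf 1_A(\tau_x\zeta)\,d\zeta(x)\Bigr].
\]
Countable additivity in $C$ follows from monotone convergence. Since $\nu_A(C)\le E_\xi[\zeta(C)]=\lambda\,\mathrm{Leb}(C)$, the measure $\nu_A$ is locally finite.

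The key point is translation invariance. For $y\in\mathbb R$, substitute $x=x'+y$, i.e.\ integrate against $\tau_y\zeta$, and use $\tau_{x'}\tau_y=\tau_{x'+y}$. This gives
\[
\nu_A(C+y)=E_\xi\Bigl[\int\mathbf 1_C(x')\,\mathbf 1_A\bigl(\tau_{x'}(\tau_y\zeta)\bigr)\,d(\tau_y\zeta)(x')\Bigr].
\]
By stationarity, $(\tau_y)_\ast P_\xi=P_\xi$, so the right-hand side equals $\nu_A(C)$. A translation-invariant locally finite Borel measure on $\mathbb R$ is a constant multiple of Lebesgue measure, so $\nu_A=c_A\,\mathrm{Leb}$ for some $c_A\in[0,\lambda]$.

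Set $P^0_\xi(A):=c_A/\lambda$, which requires $\lambda\in(0,\infty)$. Then:
\begin{itemize}
\item $A\mapsto c_A=\nu_A([0,1])$ is countably additive, again by monotone convergence;
\item $c_{\mathcal N(\mathbb R)}=\lambda$ by the definition of intensity.
\end{itemize}
Hence $P^0_\xi$ is a probability measure.

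\emph{Step 3: extension and uniqueness.} By construction, \eqref{equ:refined-Campbell-theorem} holds for $f=\mathbf 1_{C\times A}$. Both sides of \eqref{equ:refined-Campbell-theorem} are measures in the set $D\subset\mathbb R\times\mathcal N(\mathbb R)$ when $f=\mathbf 1_D$. They agree on the $\pi$-system of rectangles and are $\sigma$-finite, since $\mathbb R$ is covered by countably many $[n,n+1]\times\mathcal N(\mathbb R)$ and each such piece has finite mass $\lambda$. By the $\pi$--$\lambda$ theorem they agree on the whole product $\sigma$-algebra. Linearity then gives the identity for simple functions, and monotone convergence gives it for all nonnegative measurable $f$.

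For uniqueness, apply \eqref{equ:refined-Campbell-theorem} to $f=\mathbf 1_{[0,1]}(x)\mathbf 1_A(\zeta)$. The right-hand side is $\lambda P^0_\xi(A)$ and the left-hand side is $\nu_A([0,1])$, so any measure satisfying the identity must have $P^0_\xi(A)=\nu_A([0,1])/\lambda$.

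\emph{Main obstacle.} The only delicate step is the change of variables in the invariance computation, together with the joint measurability from Step 1 that it relies on. Once $\nu_A$ is known to be translation invariant, the rest is routine measure theory.
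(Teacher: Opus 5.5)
Your proposal is correct and is essentially the standard proof of this result in Last--Penrose (Theorem~9.1), which the paper cites rather than reproves: stationarity makes $C\mapsto\nu_A(C)$ translation invariant, hence $\nu_A=c_A\,\mathrm{Leb}$, and setting $P^0_\xi(A)=c_A/\lambda$ followed by a $\pi$--$\lambda$ extension from rectangles gives both existence and uniqueness. Two small details are worth making explicit: the joint measurability of $(x,\zeta)\mapsto\zeta((a+x,b+x])$ follows from right-continuity in $x$ (or from joint continuity of the shift in the vague topology), and for the uniqueness-of-measures step a disjoint cover by $[n,n+1)\times\mathcal N(\mathbb R)$ is the clean choice.
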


The probability measure $P^0_\xi$ is called the \emph{Palm distribution} of $\xi$. We denote the expectation with respect to $P^0_\xi$ by $E^0_\xi$.

For $A\in\mathcal{B}(\mathcal{N}(\mathbb R))$ and $B\in\mathcal{B}(\mathbb R)$ with $0<\mathrm{Leb}(B)<\infty$, taking $f(x,\zeta):= \boldsymbol{1}_B(x)\boldsymbol{1}_A(\zeta)$ in~\eqref{equ:refined-Campbell-theorem} gives
\begin{equation}\label{equ:computation-Palm-distribution}
    \begin{aligned}
        P^0_\xi(A) &= \frac{1}{\lambda\mathrm{Leb}(B)}E_{\xi}\left[\int_{\mathbb R}\chi_B(x)\chi_A(\tau_x\zeta)d\zeta(x)\right]\\
        &= \frac{1}{\lambda\mathrm{Leb}(B)}\int_{\mathcal{N}(\mathbb{R})}\int_{\mathbb R}\chi_B(x)\chi_A(\tau_x\zeta)d\zeta(x)dP_\xi(\zeta).
    \end{aligned}
\end{equation}

We record the following convergence theorem for Palm distributions. This is a particular case of~\cite[Theorem~6.3]{Kallenberg1973}. Here, for probability measures $Q_n,Q\in\mathcal P(\mathcal N(\mathbb R))$, we say that $Q_n$ converges \emph{weakly} to $Q$ if
\[
\int_{\mathcal N(\mathbb R)} F(\zeta)dQ_n(\zeta)
\longrightarrow
\int_{\mathcal N(\mathbb R)} F(\zeta)dQ(\zeta)
\]
for every bounded continuous function $F:\mathcal N(\mathbb R)\to\mathbb R$, where $\mathcal N(\mathbb R)$ is equipped with the vague topology.

\begin{theorem}[{\cite[Theorem~6.3]{Kallenberg1973}}]\label{thm:Palm-convergence}
    Let $\xi$ and $(\xi_n)_{n\in\mathbb N}$ be stationary point processes on $\mathbb{R}$ with intensities $\lambda\in (0,\infty)$ and $(\lambda_n)_{n\in\mathbb N}$ respectively. If $\xi_n\xrightarrow{d}\xi$ and $\lambda_n\to \lambda$ as $n\to\infty$, then $P^0_{\xi_n}$ converges to $P^0_{\xi}$ weakly.
\end{theorem}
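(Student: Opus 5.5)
The plan is to express the Palm expectation of a bounded continuous functional as an ordinary expectation of a (possibly unbounded) continuous functional of the point process. Convergence in distribution then gives the result, with the hypothesis $\lambda_n\to\lambda$ supplying the uniform integrability that is otherwise missing.

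\textbf{Step 1: the Campbell representation.} Fix a bounded continuous $F:\mathcal N(\mathbb R)\to\mathbb R$ and a continuous $h\ge 0$ with support in $K=[-1,1]$ and $\int h=1$. Apply Theorem~\ref{thm:refined-Campbell-theorem} with $f(x,\zeta)=h(x)F(\zeta)$, splitting $F$ into positive and negative parts. This gives
\[
E^0_{\xi_n}[F]=\frac{1}{\lambda_n}E_{\xi_n}[\Phi],\qquad E^0_{\xi}[F]=\frac{1}{\lambda}E_{\xi}[\Phi],\qquad \Phi(\zeta):=\int_{\mathbb R}h(x)F(\tau_x\zeta)\,d\zeta(x).
\]
Since $\lambda_n\to\lambda>0$, it suffices to show $E_{\xi_n}[\Phi]\to E_\xi[\Phi]$.

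\textbf{Step 2: continuity of $\Phi$.} Suppose $\zeta_k\to\zeta$ vaguely. Then the atoms of $\zeta_k$ in a neighbourhood of $K$ converge, with multiplicity, to those of $\zeta$, up to atoms that enter or leave through the boundary. Those boundary atoms carry weight $h\approx 0$, because $h$ vanishes on $\partial K$ and is continuous. Moreover, if $x_k\to x$ then $\tau_{x_k}\zeta_k\to\tau_x\zeta$ vaguely, since one can test against $f(\cdot-x_k)$ and use uniform continuity of $f\in C_c$. Hence $\Phi$ is continuous on $\mathcal N(\mathbb R)$.

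\textbf{Step 3: the domination bound and the count on $K$.} We have $|\Phi(\zeta)|\le \|F\|_\infty\|h\|_\infty\,\zeta(K)$. By stationarity, $E_\xi[\zeta(\{a\})]=0$ for every point $a$, since otherwise the intensity would be infinite. Hence $K$ is a $\xi$-continuity set. Theorem~\ref{thm:characterization-point-process-convergence} then gives $\zeta(K)\to\zeta(K)$ in distribution along $\xi_n\to\xi$. Also $E_{\xi_n}[\zeta(K)]=2\lambda_n\to2\lambda=E_\xi[\zeta(K)]$.

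\textbf{Step 4: uniform integrability.} For nonnegative integer random variables, convergence in distribution together with convergence of means implies uniform integrability. Concretely, $E[\zeta(K)\mathbf 1\{\zeta(K)>M\}]=E[\zeta(K)]-E[\zeta(K)\wedge\cdots]$ can be made small uniformly in $n$ by choosing $M$ large.

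\textbf{Step 5: passage to the limit (the main obstacle).} The functional $\Phi$ is unbounded, so convergence in distribution alone does not give convergence of expectations; this is where $\lambda_n\to\lambda$ is essential. To handle it, choose a continuous cutoff $\chi_M:[0,\infty)\to[0,1]$ equal to $1$ on $[0,M]$ and $0$ on $[M+1,\infty)$. Consider the bounded functional $\Phi\cdot\chi_M(\zeta(K))$. It is continuous at every $\zeta$ with $\zeta(\partial K)=0$, which is a $P_\xi$-full set. By the continuous mapping theorem, $E_{\xi_n}[\Phi\,\chi_M(\zeta(K))]\to E_\xi[\Phi\,\chi_M(\zeta(K))]$. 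The remainder is bounded by $\|F\|\|h\|\,E[\zeta(K)\mathbf 1\{\zeta(K)>M\}]$, which is small uniformly in $n$ by Step 4. Letting $M\to\infty$ gives $E_{\xi_n}[\Phi]\to E_\xi[\Phi]$, and hence $P^0_{\xi_n}\to P^0_\xi$ weakly.
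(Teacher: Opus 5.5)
Your proof is correct. It differs from the paper only in that the paper gives no proof here: it imports the statement from Kallenberg and uses it as a black box in Lemma~\ref{lem:continuous-mapping-theorem}.

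\textbf{Comparison.} You instead derive the result from the refined Campbell theorem, which the paper already records. The identity $E^0_{\xi_n}[F]=\lambda_n^{-1}E_{\xi_n}[\Phi]$, with $\Phi(\zeta)=\int h(x)F(\tau_x\zeta)\,d\zeta(x)$, turns Palm convergence into convergence of expectations of a continuous functional. That functional is dominated by $\|F\|_\infty\|h\|_\infty\,\zeta([-1,1])$. The hypothesis $\lambda_n\to\lambda$ is used exactly once, to make $\zeta([-1,1])$ asymptotically uniformly integrable.

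The citation buys brevity. Your route buys a self-contained proof inside the paper's own toolkit, and it shows why intensity convergence cannot be dropped. Without it, a positive fraction of the Palm mass can sit on rare, dense configurations that convergence in distribution does not see. An example is a Poisson process mixed, with probability $1/n$, with a randomly shifted lattice of spacing $1/n$.

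\textbf{Two places to tidy.}

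The displayed identity in Step~4 is garbled. What you need, for $M\notin\mathbb Z$, is
$E_{\xi_n}[\zeta(K)\mathbf{1}\{\zeta(K)>M\}]=2\lambda_n-\sum_{j<M}j\,P_{\xi_n}(\zeta(K)=j)$.
This converges to $E_{\xi}[\zeta(K)\mathbf{1}\{\zeta(K)>M\}]$, which gives
$\lim_{M\to\infty}\limsup_{n\to\infty}E_{\xi_n}[\zeta(K)\mathbf{1}\{\zeta(K)>M\}]=0$.
This is asymptotic rather than uniform-in-$n$ integrability, and it is exactly what Step~5 uses.

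In Step~2 it is cleaner to fix $a<-1<1<b$ with $\zeta(\{a\})=\zeta(\{b\})=0$. Then for large $k$ the atoms of $\zeta_k$ in $[a,b]$ converge, with multiplicity, to those of $\zeta$, and no bookkeeping of atoms crossing the boundary is needed. The joint continuity of $(x,\zeta)\mapsto\tau_x\zeta$ also uses $\sup_k\zeta_k(L)<\infty$ for compact $L$; vague convergence supplies this.
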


We next introduce their gap sequences. Let
\begin{equation}\label{equ:bi-infinite}
    \mathcal N_{s,\mathrm{bi}}(\mathbb R)
    :=
    \left\{
    \zeta\in\mathcal N_s(\mathbb R):
    \zeta((-\infty,0))=\zeta((0,\infty))=\infty
    \right\}
\end{equation}
be the space of bi-infinite simple counting measures on $\mathbb R$, and set
\[
    \mathcal N_{s,\mathrm{bi}}^0(\mathbb R) := \left\{ \zeta\in\mathcal N_{s,\mathrm{bi}}(\mathbb R): \zeta(\{0\})=1 \right\}.
\]
By~\cite[Section~13.3]{DaleyVereJones2008}, if $\xi$ is a stationary simple point process on $\mathbb R$ with positive finite intensity, then
\begin{equation}\label{equ:bi-infinite-concentration}
    P_\xi^0\left(\mathcal N_{s,\mathrm{bi}}^0(\mathbb R)\right)=1.
\end{equation}

For $\zeta\in\mathcal N_{s,\mathrm{bi}}(\mathbb R)$, label its points by
\[
    \zeta
    =
    \sum_{i\in\mathbb Z}\delta_{x_i(\zeta)},
    \qquad
    x_i(\zeta)<x_{i+1}(\zeta),
    \qquad
    x_0(\zeta)\leq 0<x_1(\zeta).
\]
Following~\cite[Section~2.1]{huesmann2025gapmetricsstationarypoint}, we call
\[
    \Delta\zeta
    := 
    \left(
    \Delta_i\zeta
    \right)_{i\in\mathbb Z},\qquad \text{where }\Delta_i\zeta:=x_{i+1}(\zeta)-x_i(\zeta), 
\]
the \emph{gap sequence} of $\zeta$. This defines a map
\[
    \Delta=(\Delta_i)_{i\in\mathbb Z}
    \colon
    \mathcal N_{s,\mathrm{bi}}(\mathbb R)
    \longrightarrow
    [0,\infty)^{\mathbb Z}.
\]
We equip $[0,\infty)^{\mathbb Z}$ with the product topology. The restriction $\Delta\big|_{\mathcal N_{s,\mathrm{bi}}^0(\mathbb R)}$ is continuous. Indeed, $\zeta_n\to\zeta$ converges in the vague topology of $\mathcal N_{s,\mathrm{bi}}^0(\mathbb R)$ as $n\to\infty$ if and only if the points $x_i(\zeta_n)$ converge to $x_i(\zeta)$ for each $i\in\mathbb Z$; see also
\cite[Section~2.1]{huesmann2025gapmetricsstationarypoint}. Hence, $\Delta_i\zeta_n = x_{i+1}(\zeta_n)-x_i(\zeta_n)$ converges to $\Delta_i\zeta$ for each $i$.

Then $\Delta_\ast P_\xi^0$ is a probability measure on $[0,\infty)^\mathbb{Z}$, and we call it the \emph{gap-sequence distribution} of $\xi$. 

A probability measure $P$ on $[0,\infty)^\mathbb{Z}$ is called \emph{stationary under shifts} if 
\[
P(\sigma_k(B)) = P(B),\qquad \text{for } 
B\in\mathcal{B}\left([0,\infty)^\mathbb{Z}\right),
\]
where $k\in\mathbb N$ and $\sigma_k:[0,\infty)^\mathbb Z\to[0,\infty)^\mathbb Z$ is a map given by $\sigma_k(x_i)_{i\in\mathbb{Z}} = (x_{i+k})_{i\in\mathbb Z}$. Moreover, a probability measure $P^0$ on $\mathcal N_{s,\mathrm{bi}}^0(\mathbb R)$ is \emph{stationary under shifts} if $\Delta_\ast P^0$ is stationary under shifts.

We call a point process $\xi$ \emph{nonnull} if $P_\xi(\xi=\varnothing) = 0$.

\begin{theorem}[{\cite[Theorem~13.3.I]{DaleyVereJones2008}}]\label{thm:inversion-formula}
    For $\lambda\in(0,\infty)$, the map $P_\xi\mapsto P^0_\xi$ is a bijection between the distributions of simple nonnull stationary point processes on $\mathbb R$ with intensity $\lambda$ and the probability measures $P^0$ on $\mathcal N_{s,\mathrm{bi}}^0(\mathbb R)$ that are stationary under shifts and
    \[
    \lambda \int_{\mathcal N_{s,\mathrm{bi}}^0(\mathbb R)}\Delta_0(\zeta)dP^0(\zeta)
    =
    1.
    \]
    Moreover, if $\xi$ is a simple nonnull stationary point process with intensity $\lambda$, then $P^0_\xi$ is the unique probability measure on $\mathcal N_{s,\mathrm{bi}}^0(\mathbb R)$ satisfying the Palm inversion formula:
    \begin{equation}\label{equ:Palm-inversion}
        \begin{aligned}
            E_\xi[f(\zeta)] 
            &= \lambda\,E^0_\xi\left[\int_0^{\Delta_0(\zeta)}f(\tau_x\zeta)dx\right]\\
            & = 
            \lambda
            \int_0^\infty E^0_\xi
            \left[
            f(\tau_t\zeta)
            \chi_{\{\Delta_0\zeta>t\}}(\zeta)
            \right]
            dt.
        \end{aligned}
    \end{equation}
    for any bounded measurable function $f:\mathcal N_{s}(\mathbb R)\to\mathbb R$.
\end{theorem}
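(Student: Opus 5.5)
The natural route is to derive everything from the refined Campbell theorem (Theorem~\ref{thm:refined-Campbell-theorem}). The key device is that every point of the line lies in exactly one gap.

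\textbf{Step 1: bi-infiniteness.} Let $\xi$ be simple, nonnull and stationary with intensity $\lambda\in(0,\infty)$. First show that $P_\xi(\mathcal N_{s,\mathrm{bi}}(\mathbb R))=1$. The event $\{\zeta((0,\infty))<\infty,\ \zeta\neq 0\}$ has a well-defined largest point. If it had positive probability, stationarity would force the law of this largest point to be translation invariant on $\mathbb R$. That is impossible for a probability measure. The same argument applies on the left, and nonnullity excludes $\zeta=0$.

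\textbf{Step 2: the inversion formula.} For bi-infinite $\zeta$ and each point $x$ of $\zeta$, the next point is $x+\Delta_0(\tau_x\zeta)$. Hence exactly one point $x$ satisfies $x\le 0<x+\Delta_0(\tau_x\zeta)$, namely $x=x_0(\zeta)$. Therefore, for bounded measurable $f$,
\[
f(\zeta)=\int_{\mathbb R} k(x,\tau_x\zeta)\,d\zeta(x),\qquad k(x,\eta):=\chi_{\{-\Delta_0(\eta)<x\le 0\}}\,f(\tau_{-x}\eta).
\]
Split $f$ into positive and negative parts and apply \eqref{equ:refined-Campbell-theorem} with this $k$. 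Then substitute $t=-x$. This gives
\[
E_\xi[f]=\lambda\,E^0_\xi\left[\int_0^{\Delta_0}f(\tau_t\zeta)\,dt\right],
\]
which is \eqref{equ:Palm-inversion}. The second line of \eqref{equ:Palm-inversion} follows by Fubini. Taking $f\equiv1$ yields $\lambda E^0_\xi[\Delta_0]=1$. The fact that $P^0_\xi$ is carried by $\mathcal N^0_{s,\mathrm{bi}}(\mathbb R)$ is \eqref{equ:bi-infinite-concentration}.

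\textbf{Step 3: shift stationarity.} Apply the refined Campbell theorem twice. In the first application use $f(x,\zeta)=\chi_B(x)\chi_A(\tau_{x_1(\zeta)}\zeta)$, where $x_1$ denotes the next point after the origin. In the second use $f(x,\zeta)=\chi_B(x)\chi_A(\zeta)$. Compare the two via the bijection between each point and its successor. This is a mass-transport argument: it equates $E_\xi\sum_{x\in\zeta\cap B}\chi_A(\tau_{x^+}\zeta)$ with the same sum taken over the successors $x^+\in B$. The two sums differ by at most two boundary terms. Let $B=[0,L]$, divide by $L$, and let $L\to\infty$. The result is $P^0_\xi(\theta^{-1}A)=P^0_\xi(A)$, where $\theta\eta:=\tau_{x_1(\eta)}\eta$ is the point shift. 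This is exactly stationarity of $\Delta_\ast P^0_\xi$ under $\sigma_1$, and hence under every $\sigma_k$.

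\textbf{Step 4: injectivity and uniqueness.} By \eqref{equ:Palm-inversion}, $P_\xi$ is determined by $P^0_\xi$, so the map $P_\xi\mapsto P^0_\xi$ is injective. Conversely, suppose $Q$ is a probability measure on $\mathcal N^0_{s,\mathrm{bi}}(\mathbb R)$ satisfying \eqref{equ:Palm-inversion}. Set $f(\zeta)=\int_{\mathbb R}\chi_B(x)\chi_A(\tau_x\zeta)\,d\zeta(x)$ and evaluate the right-hand side. Using shift stationarity, this gives $\lambda\,\mathrm{Leb}(B)\,Q(A)$. Comparing with \eqref{equ:computation-Palm-distribution} shows $Q=P^0_\xi$. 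This uniqueness step requires truncating $f$ to make it bounded and then using monotone convergence.

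\textbf{Step 5: surjectivity, the main obstacle.} Let $P^0$ be shift-stationary on $\mathcal N^0_{s,\mathrm{bi}}(\mathbb R)$ with $\lambda\int\Delta_0\,dP^0=1$. Define
\[
P(A):=\lambda\int\int_0^{\Delta_0(\eta)}\chi_A(\tau_t\eta)\,dt\,dP^0(\eta).
\]
By the normalization, $P$ is a probability measure. It is supported on simple, bi-infinite, hence nonnull, configurations.

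The hard part is proving that $P$ is stationary. The plan is to show $P(\tau_s^{-1}A)=P(A)$ for $s>0$. Translating by $s$ moves the window $[0,\Delta_0(\eta))$ to $[s,s+\Delta_0(\eta))$. That shifted window overlaps the successive gaps $[x_i,x_{i+1})$ of $\eta$. Using $\tau_{x_i}\eta=\theta^i\eta$, rewrite the contribution of the $i$-th gap as an integral over $[0,\Delta_0(\theta^i\eta))$. By $\theta$-invariance of $P^0$ the pieces telescope, so every part of $[0,s)$ and of $[s,s+\Delta_0)$ is accounted for exactly once. The delicate point is to justify the infinite rearrangement. I would first take $A$ depending only on a bounded window and $s$ small, control the number of gaps crossed using $\lambda E^0[\Delta_0]=1$, and then extend to all $A$ and $s$ by monotone class and semigroup arguments.

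Once stationarity is in hand, a direct computation with $A=\{\zeta([0,1])\in\cdot\}$ shows that the intensity of $P$ is $\lambda$. Applying Step 4 then identifies the Palm distribution of $P$ as $P^0$, which completes the bijection.
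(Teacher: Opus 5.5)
The paper gives no proof of this statement; it quotes Daley--Vere-Jones (Theorem~13.3.I). You instead derive everything from the refined Campbell theorem, using the cycle identity $f(\zeta)=\int k(x,\tau_x\zeta)\,d\zeta(x)$ for the inversion formula and a mass-transport comparison for point-stationarity. This is a sound self-contained route. Steps~1--3, injectivity in Step~4, and the architecture of Step~5 are correct.

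There is one genuine gap, in the uniqueness clause. The statement asserts that $P^0_\xi$ is the unique probability measure on $\mathcal N^0_{s,\mathrm{bi}}(\mathbb R)$ satisfying \eqref{equ:Palm-inversion}. No stationarity is assumed of a competitor $Q$. Yet your evaluation of the right-hand side at $f(\zeta)=\int\chi_B(x)\chi_A(\tau_x\zeta)\,d\zeta(x)$ uses shift stationarity of $Q$. Without it, the right-hand side is only
\[
\lambda\sum_{i\in\mathbb Z}\int\chi_A(\theta^i\eta)\,\mathrm{Leb}\bigl(B\cap(x_i(\eta)-\Delta_0(\eta),x_i(\eta)]\bigr)\,dQ(\eta),
\]
which need not equal $\lambda\,\mathrm{Leb}(B)\,Q(A)$. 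So as written you prove uniqueness only among shift-stationary measures. That suffices in two places:
\begin{itemize}
\item inside Step~5, where $Q=P^0$ is stationary by hypothesis;
\item in the paper's one application, Proposition~\ref{prop:measure-preserving}, where the competitor is $\sigma$-invariant because $\boldsymbol m_{\mathcal M}$ is $T_{\mathcal M}$-invariant.
\end{itemize}
It is still weaker than what is claimed.

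The repair is to invert the formula directly. For $\eta\in\mathcal N^0_{s,\mathrm{bi}}(\mathbb R)$ and $0\le t<\Delta_0(\eta)$ one has $x_0(\tau_t\eta)=-t$, hence $\tau_{x_0(\tau_t\eta)}\tau_t\eta=\eta$. Test \eqref{equ:Palm-inversion} with $f(\zeta):=h(\tau_{x_0(\zeta)}\zeta)$, where $h:=\chi_A\min\{n,1/\Delta_0\}$. The right-hand side becomes $\lambda\int\chi_A\min\{n\Delta_0,1\}\,dQ$. Letting $n\to\infty$ by monotone convergence gives
\[
Q(A)=\frac{1}{\lambda}\,E_\xi\left[\frac{\chi_A\bigl(\tau_{x_0(\zeta)}\zeta\bigr)}{x_1(\zeta)-x_0(\zeta)}\right],
\]
so $Q$ is determined by $P_\xi$ alone, with no stationarity assumption.

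Separately, the ``main obstacle'' you flag in Step~5 is not one, and no truncation, bounded windows or semigroup argument is needed. For $s>0$, using $\tau_{\Delta_0(\eta)}\eta=\theta\eta$,
\[
\int_s^{s+\Delta_0(\eta)}\chi_A(\tau_u\eta)\,du=\int_0^{\Delta_0(\eta)}\chi_A(\tau_u\eta)\,du+\int_0^{s}\chi_A(\tau_v\theta\eta)\,dv-\int_0^{s}\chi_A(\tau_v\eta)\,dv .
\]
All three terms are $P^0$-integrable, and $\theta$-invariance of $P^0$ cancels the last two, giving $P(\tau_s^{-1}A)=P(A)$. Here $\theta$-invariance follows from the hypothesis on $\Delta_\ast P^0$, because $\Delta$ is injective on $\mathcal N^0_{s,\mathrm{bi}}(\mathbb R)$ and $\Delta\circ\theta=\sigma_1\circ\Delta$. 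Your gap-by-gap tiling would also work directly by Tonelli, since every term is nonnegative. Finally, the intensity identity $E_P[\zeta([0,1])]=\lambda$ is not a one-line computation. It needs the same $\theta$-invariance tiling argument, after which it goes through.
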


For later usage, we record the distribution of gap sequence of a Poisson point process.

\begin{theorem}[{\cite[Theorem~7.2]{LastPenrose2018Poisson}}]\label{thm:gap-distributions-ppp}
    Let $\xi$ be a Poisson point process on $\mathbb R$ with intensity
    $\lambda>0$. Then
    \[
    \Delta_\ast P_\xi^0 = \bigotimes_{ j\in\mathbb Z}\mathrm{Exp}(\lambda),
    \]
    where $\operatorname{Exp}(\lambda)$ denotes the exponential distribution on $(0,\infty)$ with rate $\lambda$.
\end{theorem}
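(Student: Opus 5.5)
The plan is to identify the Palm distribution of a Poisson process explicitly and then read off the gaps from an elementary computation. The first step is the Slivnyak--Mecke characterization: if $\xi$ is a Poisson point process on $\mathbb R$ with intensity $\lambda$, then $P^0_\xi$ is the law of $\zeta+\delta_0$, where $\zeta$ has law $P_\xi$.

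To verify this from the refined Campbell theorem (Theorem~\ref{thm:refined-Campbell-theorem}), it suffices to check the Mecke equation
\[
E_\xi\left[\int_{\mathbb R} f(x,\zeta)\,d\zeta(x)\right]=\lambda\int_{\mathbb R}E_\xi\left[f(x,\zeta+\delta_x)\right]dx .
\]
We check it first for $f(x,\zeta)=\boldsymbol 1_B(x)\,e^{-\zeta(h)}$ with $B$ bounded and $h\ge 0$ in $B_c(\mathbb R)$. Both sides are then computed from the Laplace functional $E_\xi[e^{-\zeta(h)}]=\exp\bigl(-\lambda\int(1-e^{-h})\,dx\bigr)$: the left side is obtained by differentiating in a parameter $s$ along $h+s\boldsymbol 1_B$. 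The identity extends to all nonnegative measurable $f$ by a monotone class argument. Substituting $f(x,\tau_x\zeta)$ and using stationarity, $\tau_x(\zeta+\delta_x)=\tau_x\zeta+\delta_0$ has the law of $\zeta+\delta_0$, so uniqueness in Theorem~\ref{thm:refined-Campbell-theorem} gives the claim.

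Under $P^0_\xi$ we therefore have $x_0=0$, and the points in $(0,\infty)$ and in $(-\infty,0)$ are the restrictions $\zeta|_{(0,\infty)}$ and $\zeta|_{(-\infty,0)}$. These are independent Poisson processes, by the independence of counts on disjoint sets in the definition and a standard $\pi$-system argument. By symmetry under $x\mapsto -x$, it suffices to show that the positive points $0<x_1<x_2<\cdots$ have increments $x_1,\,x_2-x_1,\ldots$ that are i.i.d.\ $\mathrm{Exp}(\lambda)$. The negative increments $\Delta_{-1},\Delta_{-2},\ldots$ are then handled identically and are independent of the positive ones, which yields $\Delta_\ast P^0_\xi=\bigotimes_{j\in\mathbb Z}\mathrm{Exp}(\lambda)$.

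The main computation is the joint law of $(x_1,\ldots,x_n)$. Partition $(0,\infty)$ into small intervals around prescribed $0<s_1<\cdots<s_n$ and use independence and Poisson counts on disjoint sets. Letting the partition shrink shows that $(x_1,\ldots,x_n)$ has density $\lambda^n e^{-\lambda s_n}$ on the ordered simplex $\{0<s_1<\cdots<s_n\}$. Alternatively, one can compute $P(x_1>t_1,\ldots)$ from void probabilities $P(\zeta(I)=0)=e^{-\lambda|I|}$. The linear change of variables $u_1=s_1$, $u_k=s_k-s_{k-1}$ has Jacobian $1$ and turns this density into $\prod_{k=1}^n\lambda e^{-\lambda u_k}$ on $(0,\infty)^n$. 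This identifies every finite-dimensional marginal, and Kolmogorov consistency finishes the argument.

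I expect the only delicate step to be the rigorous passage from counts on small intervals to the density, which requires controlling the event of two points in a single small interval. That event has probability $O(\varepsilon^2)$ per interval, so it contributes negligibly in total.
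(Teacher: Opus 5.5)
Your proposal is correct. The paper gives no argument of its own and simply cites Last--Penrose, and your route is exactly the standard proof behind that citation: the Mecke equation plus uniqueness in the refined Campbell theorem gives Slivnyak's identification of $P^0_\xi$ as the law of $\zeta+\delta_0$, and then independence of the two half-lines and the interval theorem on each give the product of exponentials. The step you flag as delicate is actually exact. For ordered intervals $I_k=(a_k,b_k]$ with $b_k\le a_{k+1}$, the event $\{x_k\in I_k \text{ for } 1\le k\le n\}$ is determined by independent Poisson counts on $(0,a_1],I_1,(b_1,a_2],\ldots,I_n$, its probability equals $\int_{I_1\times\cdots\times I_n}\lambda^n e^{-\lambda s_n}\,ds$, and such boxes form a $\pi$-system generating the Borel sets of the open simplex, so no $O(\varepsilon^2)$ error ever arises. (Similarly, what closes the argument is uniqueness of a measure with prescribed finite-dimensional marginals, not Kolmogorov's extension theorem.)
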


The following lemma transfers point process convergence to the corresponding gap-sequence distributions; see also~\cite[Lemma~2.10(b)]{huesmann2025gapmetricsstationarypoint}.

\begin{lemma}\label{lem:continuous-mapping-theorem}
    Let $\xi$ and $(\xi_n)_{n\in\mathbb N}$ be simple stationary point processes on $\mathbb{R}$ with intensities $\lambda\in (0,\infty)$ and $(\lambda_n)_{n\in\mathbb N}$ respectively. If $\xi_n\xrightarrow{d}\xi$ and $\lambda_n\to \lambda$ as $n\to\infty$, then $\Delta_\ast P_{\xi_n}^0\to\Delta_\ast P_{\xi}^0$ weakly.
\end{lemma}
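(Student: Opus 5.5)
The plan is to combine Theorem~\ref{thm:Palm-convergence} with the continuous mapping theorem. First, Theorem~\ref{thm:Palm-convergence} applies directly to the hypotheses $\xi_n\xrightarrow{d}\xi$ and $\lambda_n\to\lambda$, and gives $P^0_{\xi_n}\to P^0_\xi$ weakly as probability measures on $\mathcal N(\mathbb R)$. It then suffices to push these measures forward under $\Delta$ and check that the continuous mapping theorem applies.

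Second, we locate the supports. Each $\xi_n$ and $\xi$ is simple and stationary. For those $n$ with $\lambda_n\in(0,\infty)$, \eqref{equ:bi-infinite-concentration} gives $P^0_{\xi_n}(\mathcal N^0_{s,\mathrm{bi}}(\mathbb R))=1$, and likewise $P^0_\xi(\mathcal N^0_{s,\mathrm{bi}}(\mathbb R))=1$. Since $\lambda_n\to\lambda>0$, this holds for all large $n$, and finitely many indices do not affect the limit. So $\Delta_\ast P^0_{\xi_n}$ and $\Delta_\ast P^0_\xi$ are well-defined probability measures on $[0,\infty)^{\mathbb Z}$.

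Third, we apply the continuous mapping theorem. Extend $\Delta$ arbitrarily, say by $0$, off $\mathcal N^0_{s,\mathrm{bi}}(\mathbb R)$ to a measurable map $\widetilde\Delta$ on $\mathcal N(\mathbb R)$. The version of the continuous mapping theorem we need states that if $Q_n\to Q$ weakly and $\widetilde\Delta$ is continuous at $Q$-almost every point, then $\widetilde\Delta_\ast Q_n\to\widetilde\Delta_\ast Q$. It also requires $\mathcal N(\mathbb R)$ to be metrizable (it is Polish in the vague topology) and $\widetilde\Delta$ to be measurable.

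The main obstacle is continuity. The paper only records that $\Delta$ restricted to $\mathcal N^0_{s,\mathrm{bi}}(\mathbb R)$ is continuous. Continuity of $\widetilde\Delta$ at points of $\mathcal N^0_{s,\mathrm{bi}}$ within all of $\mathcal N(\mathbb R)$ may fail, because vaguely convergent $\zeta_n$ need not have a point at $0$ or be simple. We avoid this with the Portmanteau route. Let $F$ be a closed set in $[0,\infty)^{\mathbb Z}$ and put $S:=\mathcal N^0_{s,\mathrm{bi}}(\mathbb R)$. Then
\[
\limsup_n P^0_{\xi_n}(\Delta\in F)=\limsup_n P^0_{\xi_n}(\Delta^{-1}F)\le\limsup_n P^0_{\xi_n}(\overline{\Delta^{-1}F}).
\]
By weak convergence on $\mathcal N(\mathbb R)$, the right-hand side is at most $P^0_\xi(\overline{\Delta^{-1}F})$, where the closure is taken in $\mathcal N(\mathbb R)$.

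It remains to show $\overline{\Delta^{-1}F}\cap S\subset\Delta^{-1}F$ up to a $P^0_\xi$-null set. Take $\zeta\in S$ that is a vague limit of $\zeta_k\in\Delta^{-1}F\subset S$. Each $\zeta_k$ is simple with a point at $0$. Near each point $x_i(\zeta)$, vague convergence forces exactly one point of $\zeta_k$ for large $k$, because $\zeta$ is simple. Hence $x_i(\zeta_k)\to x_i(\zeta)$ with consistent labels anchored at $0$. This is exactly the convergence mechanism cited from \cite{huesmann2025gapmetricsstationarypoint}, and it needs only that the limit lies in $S$, not that the whole sequence does. Therefore $\Delta\zeta_k\to\Delta\zeta$, and closedness of $F$ gives $\Delta\zeta\in F$. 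Since $P^0_\xi(S)=1$, we conclude $\limsup_n\Delta_\ast P^0_{\xi_n}(F)\le\Delta_\ast P^0_\xi(F)$, and Portmanteau yields the claimed weak convergence.
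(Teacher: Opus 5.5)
Your proof is correct and follows the paper's route, which combines Theorem~\ref{thm:Palm-convergence} with the continuous mapping theorem. Your closed-set Portmanteau argument makes explicit the reduction to the full-measure subspace $S=\mathcal N^0_{s,\mathrm{bi}}(\mathbb R)$, the only place where $\Delta$ is continuous, a step the paper's one-line citation leaves implicit.

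One minor correction: your remark that the labeling argument ``needs only that the limit lies in $S$'' is wrong as stated. If you translate $\zeta$ so that its point at $0$ moves slightly to the right, every gap label shifts by one. This does no harm, however, since your approximants $\zeta_k\in\Delta^{-1}F$ lie in $S$ and so each has a point at $0$.
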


\begin{proof}
    By Theorem~\ref{thm:Palm-convergence}, $P^0_{\xi_n}\longrightarrow P^0_\xi$ weakly as $n\to\infty$. Since $\Delta$ is continuous on $\mathcal N_{s,\mathrm{bi}}^0(\mathbb R)$, the conclusion follows from the continuous mapping theorem~\cite[Theorem~5.27]{Kallenberg2021Foundations}.
\end{proof}

For every $m\in\mathbb N$, $j_1,\ldots,j_m\in\mathbb Z$, define the finite dimensional distribution function
\begin{equation}\label{equ:finite-dimensional-distribution}
    G_{\xi;\, j_1,\ldots,j_m}(t_1,\ldots,t_m):= P_\xi^0(\Delta_{j_1}\le t_1,\ldots,\Delta_{j_m}\le t_m),
\end{equation}
for $t_1,\ldots,t_m\in\mathbb R$. Since the gaps are positive, $G_{\xi;\,j_1,\ldots,j_m}(t_1,\ldots,t_m)=0$ whenever $t_\ell\le 0$ for some $1\leq \ell\leq m$. Since the gap-sequence distribution is stationary under shifts, we have for every $m\in\mathbb N$ and $j_1,\ldots,j_m,k\in\mathbb Z$,
\begin{equation}\label{equ:interval-stationarity}
    G_{\xi;,j_1+k,\ldots,j_m+k}(t_1,\ldots,t_m)
    =
    G_{\xi;,j_1,\ldots,j_m}(t_1,\ldots,t_m),\qquad t_1,\ldots,t_m\in\mathbb R.
\end{equation}

We conclude this subsection by recording the behavior of the intensity and the distribution functions under rescaling. For $a>0$, define the scaling map $M_a:\mathcal N_s(\mathbb R)\to\mathcal N_s(\mathbb R)$ by $M_a\bigl(\sum_i\delta_{x_i}\bigr):=\sum_i\delta_{ax_i}$. For a point process $\xi$ on $\mathbb R$, define $a\xi:=M_a\circ\xi$. Then:
\begin{itemize}
    \item If $\xi$ has intensity $\lambda$, then $a\xi$ has intensity $\lambda_{a\xi} = \frac{\lambda}{a}$.
    \item $\Delta_i(a\zeta) = a\,\Delta_i\zeta$ for $i\in\mathbb Z$.
    \item The Palm distribution of $a\xi$ is the pushforward of $P_\xi^0$ under the map $\zeta\mapsto a\zeta$. Consequently, for every $m\in\mathbb N$, $j_1,\ldots,j_m\in\mathbb Z$, and $t_1,\ldots,t_m\in\mathbb R$,
    \[
        G_{a\cdot\xi;\,j_1,\ldots,j_m}(t_1,\ldots,t_m)
        =
        G_{\xi;\,j_1,\ldots,j_m}
        \left(
        \frac{t_1}{a},\ldots,\frac{t_m}{a}
        \right).
    \]
\end{itemize}

\subsection{Periodic point process}

We establish a general property of periodic point processes that will be used in the following sections. Let $\zeta\in\mathcal N(\mathbb R)$. For $L>0$, we say that $\zeta$ is $L$-periodic if $\tau_L\zeta=\zeta$.

\begin{lemma}\label{lem:periodicPP} 
Let $\zeta\in\mathcal N(\mathbb R)$ be a simple $L$-periodic counting measure for some $L>0$. Assume that $N:=\zeta([0,L))>0$. Let $\theta$ be uniformly distributed on $[0,L)$, and define the point process
\[
    \xi(\theta):=\tau_\theta\zeta .
\]
Then 
\begin{enumerate}
    \item $\xi$ is a stationary simple point process with intensity $N/L$;
    \item \textup{(Palm distribution)} For every $A\in\mathcal B(\mathcal N(\mathbb R))$,
    \[
    P_{\xi}^0(A)
    =
    \frac1N \sum_{x\in\zeta\cap[0,L)}
    \chi_A(\tau_x\zeta).
    \]
    \item \textup{(Distribution of gap sequence)} For any $j_1,\ldots,j_m\in\mathbb Z$ and $t_1,\ldots,t_m\ge 0$,
    \[
        \begin{aligned}
        &P_\xi^0\left(\Delta_{j_1}\le t_1,\ldots, \Delta_{j_m}\le t_m\right) = \frac{1}{N}{\#\left\{1\le i\le N:\Delta_{i+j_1}\zeta\le t_1,\ldots, \Delta_{i+j_m}\zeta\le t_m \right\}}.
        \end{aligned}
    \]
\end{enumerate}
\end{lemma}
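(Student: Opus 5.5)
The plan is to verify the three claims directly from the definitions, using the refined Campbell theorem (Theorem~\ref{thm:refined-Campbell-theorem}) to identify the Palm distribution.

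\emph{Part (1).} Stationarity follows from the invariance of the uniform distribution modulo $L$. For $y\in\mathbb R$ we have $\tau_y\xi(\theta)=\tau_{\theta+y}\zeta$. By $L$-periodicity this equals $\tau_{(\theta+y)\bmod L}\zeta$, and $(\theta+y)\bmod L$ is again uniform on $[0,L)$. Simplicity is inherited from $\zeta$, since translates of a simple measure are simple. For the intensity, I would apply \eqref{equ:intensity-of-stationary-pp} with $d=1$ and interval length $L$. For every $\theta$, periodicity gives $\tau_\theta\zeta([0,L))=\zeta([\theta,\theta+L))=N$. So the expected count is $N$ and the intensity is $N/L$, which is finite and positive.

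\emph{Part (2).} Let $Q$ denote the candidate measure $\frac1N\sum_{x\in\zeta\cap[0,L)}\delta_{\tau_x\zeta}$. By uniqueness in Theorem~\ref{thm:refined-Campbell-theorem}, it suffices to check \eqref{equ:refined-Campbell-theorem} for $Q$. It is enough to take $f(x,\eta)=\chi_B(x)\chi_A(\eta)$ with $B\subset[0,L)$ Borel, because the functions $\chi_B\chi_A$ with $B$ ranging over Borel subsets of translates of $[0,L)$ generate the product $\sigma$-algebra. The general case then follows by translation-covariance in the $x$ variable, or by a monotone class argument; this bookkeeping is the main (mild) obstacle. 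The computation itself goes as follows. The left side equals
\[
\frac1L\int_0^L\sum_{y\in\zeta}\chi_B(y-\theta)\chi_A(\tau_{y-\theta}\tau_\theta\zeta)\,d\theta .
\]
Since $\tau_{y-\theta}\tau_\theta\zeta=\tau_y\zeta$, this becomes
\[
\frac1L\sum_{y\in\zeta}\chi_A(\tau_y\zeta)\,\mathrm{Leb}\{\theta\in[0,L):y-\theta\in B\}.
\]
Group the points $y$ by their residue $x=y\bmod L\in\zeta\cap[0,L)$. By periodicity $\tau_y\zeta=\tau_x\zeta$, and summing over $y\equiv x$ the sets $\{\theta\in[0,L):y-\theta\in B\}$ tile to a set of total measure $\mathrm{Leb}(B)$. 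The left side is therefore
\[
\frac{\mathrm{Leb}(B)}{L}\sum_{x\in\zeta\cap[0,L)}\chi_A(\tau_x\zeta)=\lambda\,\mathrm{Leb}(B)\,Q(A),
\]
using $\lambda=N/L$. This matches the right side of \eqref{equ:refined-Campbell-theorem}, so $P^0_\xi=Q$.

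\emph{Part (3).} Label the points of $\zeta$ in $[0,L)$ as $y_1<\dots<y_N$ and extend periodically, so that $\Delta_i\zeta=y_{i+1}-y_i$ with indices understood mod $N$. Since $\zeta$ is nonempty and periodic, each $\tau_{y_i}\zeta$ is bi-infinite with a point at $0$. Its $k$-th point is $y_{i+k}-y_i$, so $\Delta_j(\tau_{y_i}\zeta)=\Delta_{i+j}\zeta$. Substituting into the formula from Part~(2), with $A$ the event $\{\Delta_{j_\ell}\le t_\ell\ \forall\ell\}$, gives the stated count divided by $N$.
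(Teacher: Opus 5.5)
Your proof is correct and follows essentially the same route as the paper. Stationarity and intensity come from the periodicity of $\theta\mapsto\tau_\theta\zeta$, the Palm distribution from unfolding the uniform $\theta$-average in the refined Campbell identity and collapsing to one period, and the gap formula from relabeling. The only difference is in (2), where the monotone class step you flag as the main obstacle is unnecessary: Theorem~\ref{thm:refined-Campbell-theorem} already guarantees that $P^0_\xi$ exists, so evaluating the identity at the single test function $f(x,\eta)=\chi_{[0,L)}(x)\chi_A(\eta)$ pins down $P^0_\xi(A)=Q(A)$ directly, which is what the paper does via \eqref{equ:computation-Palm-distribution}; in any case, your computation holds for every Borel $B$, not only $B\subset[0,L)$.
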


\begin{proof}

(1) Let $A\in\mathcal B(\mathcal N(\mathbb R))$ and $t\in\mathbb R$. Since the function $\theta\mapsto \chi_A(\tau_\theta\zeta)$ is $L$-periodic, 
\[ 
    \begin{aligned} 
    P_\xi(\tau_t\xi\in A) &= \frac1L\int_0^L 
    \chi_A(\tau_{t+\theta}\zeta)\,d\theta= \frac1L\int_t^{t+L}
    \chi_A(\tau_\theta\zeta)\,d\theta \\
    &= \frac1L\int_0^L
    \chi_A(\tau_\theta\zeta)\,d\theta = P_\xi(\xi\in A). 
    \end{aligned} 
\] 
Thus $\xi$ is stationary.

Let $\lambda$ denote the intensity of $\xi$. Since 
\[
    \xi([0,L)) = \zeta([\theta,\theta+L)) = \zeta([0,L)) = N,
\] 
we have $\lambda = E_\xi [\xi([0,L))]/L= N/L$.

(2) Taking $B = [0,L)$ in~\eqref{equ:computation-Palm-distribution} gives
\[ 
    \begin{aligned} 
    P_\xi^0(A) 
    &= 
    \frac{1}{\lambda L} E_\xi\left[ \int_0^L \chi_A(\tau_x\xi)\,d\xi(x) \right] 
    =
    \frac1N E_\xi\left[ \int_\theta^{\theta+L} 
    \chi_A(\tau_y\zeta)\,d\zeta(y) \right]. 
    \end{aligned}
\] 
Since both $\zeta$ and $y\mapsto \tau_y\zeta$ are $L$-periodic, the last integral is independent of $\theta$. Therefore, 
\[ 
    P_\xi^0(A) 
    = \frac1N\int_0^L\chi_A(\tau_y\zeta)\,d\zeta(y) 
    = \frac1N\sum_{x\in\zeta\cap[0,L)} \chi_A(\tau_x\zeta). 
\]

(3) The points in $\zeta\cap[0,L)$ are $x_i(\zeta)$, $1\le i\le N$. Since $\Delta_j\left(\tau_{x_i(\zeta)}\zeta\right)=\Delta_{i+j}\zeta$ and $\Delta_{i+N}\zeta=\Delta_i\zeta$, by~(2), 
\[
    \Delta_\ast P_\xi^0  = \frac1N \sum_{x\in\zeta\cap[0,L)} \delta_{\Delta(\tau_x\zeta)}.
\]
Evaluating this measure on $\left\{u_{j_1}\le t_1,\ldots,u_{j_m}\le t_m\right\}\subset[0,\infty)^\mathbb{Z}$ gives 
\[ 
    \frac1N \#\left\{ 1\le i\le N: \Delta_{i+j_1}\zeta\le t_1,\ldots, \Delta_{i+j_m}\zeta\le t_m \right\},
\] 
as claimed.
\end{proof}

\section{Point process limit of saddle connection directions}

From now on, we fix an $\mathrm{SL}(2,\mathbb R)$-invariant ergodic probability measure $\mu$ on $\mathcal{H}^1(\kappa)$, together with its support
\[
    \mathcal M=\operatorname{supp}\mu.
\]
We will work under the following assumption mentioned in the introduction, and indicate explicitly each use:
\begin{enumerate}[label=($\ast$), ref=($\ast$)]
    \item $\mu$-generic points have circle limit measure $\mu$.
\end{enumerate}

\subsection{Direction point process and slope point process}\label{subsec:direction-and-slope-pp}

We first recall the direction point processes from the introduction.

For $\omega\in\mathcal H(\kappa)$ and $R>0$, $\Theta_\omega^R\subset[0,2\pi)$ denotes the set of distinct directions of saddle connections on $\omega$ of length at most $R$ and $D_R(\omega) = \#\Theta^R_\omega$. Let
\[
    \widetilde\Theta^R_\omega
    :=
    \Theta^R_\omega+2\pi\mathbb Z
\]
be its periodic extension to $\mathbb R$.

The \emph{direction point process} associated with $\omega$ and $R$ is the random variable
\[
    \xi_\omega^R: [0,2\pi)\longrightarrow \mathcal N(\mathbb R),\qquad
    \xi_\omega^R(\theta)
    :=
    \sum_{\varphi\in\widetilde\Theta_\omega^R}
    \delta_{\varphi-\theta},
\]
where $\theta\in[0,2\pi)$ is distributed according to the probability measure $\frac{1}{2\pi}\mathrm{Leb}$. Write  
\[
P_{\omega, R}:= \frac{1}{2\pi} \left(\xi_\omega^R\right)_\ast \mathrm{Leb}
\]
for its distribution on $\mathcal N(\mathbb R)$.

\begin{lemma}\label{lem:angle-point-process}
    For every $\omega\in\mathcal H(\kappa)$ and $R>0$, the direction point process $\xi^R_\omega$ is simple and stationary with intensity $\frac{D_R(\omega)}{2\pi}$.
\end{lemma}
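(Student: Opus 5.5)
The plan is to reduce the statement directly to Lemma~\ref{lem:periodicPP}, applied with period $L=2\pi$ and counting measure $\zeta:=\sum_{\varphi\in\widetilde\Theta^R_\omega}\delta_\varphi$. With this choice, $\xi_\omega^R(\theta)=\tau_\theta\zeta$, because $(\tau_\theta\zeta)(B)=\zeta(B+\theta)$ counts the points $\varphi$ with $\varphi-\theta\in B$. So it suffices to check the three hypotheses of that lemma: $\zeta$ is locally finite and simple, $\zeta$ is $2\pi$-periodic, and $N=\zeta([0,2\pi))>0$. Then $N=D_R(\omega)$, and part (1) of the lemma gives simplicity, stationarity and intensity $D_R(\omega)/(2\pi)$.

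The three checks go as follows.

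\emph{Local finiteness.} The set $\Theta^R_\omega$ is finite, since $\Lambda_\omega$ is discrete and hence $\Lambda_\omega\cap B(0,R)$ is finite. In particular $D_R(\omega)\le N_R(\omega)<\infty$. Consequently $\widetilde\Theta^R_\omega$ meets each bounded interval in finitely many points.

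\emph{Simplicity.} The set $\Theta^R_\omega\subset[0,2\pi)$ consists of distinct directions. The translates $\Theta^R_\omega+2\pi k$, for $k\in\mathbb Z$, lie in the disjoint intervals $[2\pi k,2\pi(k+1))$. Hence each point of $\widetilde\Theta^R_\omega$ appears exactly once, and $\zeta$ is simple.

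\emph{Periodicity.} The identity $\tau_{2\pi}\zeta=\zeta$ holds because $\widetilde\Theta^R_\omega+2\pi=\widetilde\Theta^R_\omega$.

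\emph{Positivity of $N$.} The only genuine point is that $N=\zeta([0,2\pi))=D_R(\omega)$ must be positive, and this can fail for small $R$. The statement is for every $R>0$, so if $R$ is below the length of the shortest saddle connection, then $\Theta^R_\omega=\varnothing$. In that case I would handle it separately. Then $\xi^R_\omega$ is almost surely the zero measure. That measure is trivially simple and stationary, and its intensity is $0=D_R(\omega)/(2\pi)$. So the claim holds directly, without invoking the lemma, which assumes $N>0$.

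When $D_R(\omega)>0$, Lemma~\ref{lem:periodicPP}(1) applies, with $\theta$ uniform on $[0,2\pi)$ exactly as in the definition of $P_{\omega,R}$, and this completes the argument. I expect no real obstacle beyond handling this degenerate case.
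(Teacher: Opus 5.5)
Your proposal is correct and follows the paper's proof: the paper likewise writes $\xi^R_\omega(\theta)=\tau_\theta\zeta^R_\omega$ for the $2\pi$-periodic counting measure $\zeta^R_\omega$ with $\zeta^R_\omega([0,2\pi))=D_R(\omega)$, and then applies Lemma~\ref{lem:periodicPP}(1). Your separate treatment of the case $D_R(\omega)=0$, where that lemma's hypothesis $N>0$ fails, covers a point the paper passes over silently, and your handling of it is right.
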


\begin{proof}
Since $\Theta^R_\omega$ consists of distinct directions, $\xi^{R}_{\omega}$ is simple.
    
Set 
\[
\zeta^{R}_{\omega}=\sum_{\varphi\in\widetilde\Theta^R_\omega}\delta_{\varphi}
\]
Then $\zeta^{R}_{\omega}$ is $2\pi$-periodic and 
\[
\xi^{R}_{\omega}(\theta) = \tau_{\theta}\zeta^{R}_{\omega}.
\]
Moreover,
\[
\zeta^{R}_{\omega}([0,2\pi)) = \# \Theta^R_\omega = D_R(\omega).
\]
By Lemma~\ref{lem:periodicPP}~(1), $\xi^{R}_{\omega}$ is stationary with intensity $\frac{D_R(\omega)}{2\pi}$.
\end{proof}

We next extend the slope point processes from the introduction to $\mathcal M$.

Let $\Lambda_\omega^{\mathrm{red}}\subset \Lambda_\omega$ be the subset obtained by taking the shortest holonomy vector in each ray in $\mathbb{R}^2$ containing a vector of $\Lambda_\omega$, where ``$\mathrm{red}$" stands for ``reduced". This choice of holonomy vectors is used in~\cite{UyanikWork2016} to study gap distributions for Veech surfaces. We use the terminology ``reduced" instead of ``short" as in~\cite{UyanikWork2016} so as not to conflict with ``short" as in~\cite{masur2025lengthssaddleconnectionsrandom}. 

For later use, we record the following properties of $\Lambda^{\mathrm{red}}_\omega$. For $f\in B_c(\mathbb{R}^2)$, define the transform $\widehat{f}^{\mathrm{red}}$ by
\[
\widehat{f}^{\mathrm{red}}(\omega):= \sum_{v\in \Lambda^{\mathrm{red}}_\omega}f(v),\qquad \omega\in\mathcal{H}(\kappa).
\]

\begin{lemma}\label{lem:SV-quadratic-short-holonomy-vectors}
   Then there exists a constant $c^{\mathrm{red}}(\mu)>0$ such that
    \[
        \int_{\mathcal M}
        \widehat f^{\mathrm{red}}(\omega)\,d\mu(\omega)
        =
        c^{\mathrm{red}}(\mu)
        \int_{\mathbb R^2} f(v)\,d\operatorname{Leb}_2(v)
    \]
    for every $f\in B_c(\mathbb R^2)$, and
    \[
        \lim_{R\to\infty}
        \frac{1}{\pi R^2}
        \#\{v\in\Lambda_\omega^{\mathrm{red}}:|v|\le R\}
        =
        c^{\mathrm{red}}(\mu)
    \]
    for $\mu$-almost every $\omega\in\mathcal M$.

    Moreover, when $\mu$ is the Masur--Veech measure $\mu_{\mathcal C}$ on a connected component $\mathcal C$ of $\mathcal{H}^1(\kappa)$, $\Lambda_\omega^{\mathrm{red}}=\Lambda_\omega$ for $\mu_{\mathcal C}$-almost every $\omega$, and $c^{\mathrm{red}}(\mathcal C)=c(\mathcal C)$.
\end{lemma}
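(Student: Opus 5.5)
Three things need proof: a Siegel--Veech formula for the reduced transform, quadratic asymptotics of the reduced count, and the identification of the reduced set with the full set in the Masur--Veech case. The plan is to rerun the standard Siegel--Veech and Eskin--Masur arguments with $\Lambda_\omega$ replaced by $\Lambda_\omega^{\mathrm{red}}$. The only structural input needed is that $\Lambda_\omega^{\mathrm{red}}$ is an $\mathrm{SL}(2,\mathbb R)$-equivariant discrete subset, dominated by $\Lambda_\omega$.

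\textbf{Equivariance and domination.} For $g\in\mathrm{SL}(2,\mathbb R)$ we have $\Lambda_{g\omega}=g\Lambda_\omega$. A linear map sends rays to rays and preserves the order of lengths along each ray. Hence the shortest vector on each ray is preserved, and $\Lambda^{\mathrm{red}}_{g\omega}=g\Lambda^{\mathrm{red}}_\omega$. Consequently $\widehat{f}^{\mathrm{red}}(g\omega)=\widehat{(f\circ g)}^{\mathrm{red}}(\omega)$. Since $\Lambda^{\mathrm{red}}_\omega\subset\Lambda_\omega$, we also get $0\le\widehat{|f|}^{\mathrm{red}}\le\widehat{|f|}$. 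This domination gives integrability: $\widehat f$ is in $L^1(\mu)$ by Theorem~\ref{thm:Siegel-Veech-formula}, and in fact in $L^{1+\epsilon}$ by the standard Eskin--Masur bounds.

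\textbf{Siegel--Veech formula.} With integrability in hand, $f\mapsto\int\widehat f^{\mathrm{red}}\,d\mu$ is a positive linear functional on $C_c(\mathbb R^2\setminus\{0\})$, hence a Radon measure $\nu$ on $\mathbb R^2\setminus\{0\}$. By the equivariance above and the $\mathrm{SL}(2,\mathbb R)$-invariance of $\mu$, the measure $\nu$ is $\mathrm{SL}(2,\mathbb R)$-invariant. The only $\mathrm{SL}(2,\mathbb R)$-invariant Radon measures on $\mathbb R^2\setminus\{0\}$ are multiples of Lebesgue measure; this is exactly Veech's argument. Therefore $\nu=c^{\mathrm{red}}(\mu)\mathrm{Leb}_2$. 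Extending from $C_c$ to $B_c(\mathbb R^2)$ is routine by monotone class; the origin is harmless because $0\notin\Lambda_\omega$.

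For positivity of $c^{\mathrm{red}}(\mu)$: every surface has saddle connections, so each nonempty ray through $\Lambda_\omega$ carries a reduced vector. Hence $\widehat{\chi_{B(0,R)}}^{\mathrm{red}}>0$ for $R$ large on a set of positive measure, and $c^{\mathrm{red}}(\mu)>0$.

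\textbf{Counting asymptotics.} I would repeat the Eskin--Masur proof verbatim; it uses only equivariance, the integral formula, and $L^{1+\epsilon}$ domination. Concretely, approximate the indicator of $B(0,R)$ by a thin sector-type function. Then express $\#(\Lambda^{\mathrm{red}}_\omega\cap B(0,R))/R^2$ as a circle average $\frac1{2\pi}\int_0^{2\pi}\widehat h^{\mathrm{red}}(g_tr_\theta\omega)\,d\theta$ with $e^{t/2}\sim R$, up to small error. Next apply the Eskin--Masur equidistribution of expanding circles for $\mu$-a.e.\ $\omega$ (or assumption~\ref{ass:circle-limit}). The main obstacle is that $\widehat h^{\mathrm{red}}$ is unbounded, so weak convergence does not apply directly. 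Exactly as in Eskin--Masur, this is handled by the uniform integrability of $\widehat h$ along circles, which transfers to $\widehat h^{\mathrm{red}}\le\widehat h$ by domination. Finally, sandwiching the indicator between such functions yields the limit $\pi c^{\mathrm{red}}(\mu)$.

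\textbf{The Masur--Veech case.} It suffices to show that for $\mu_{\mathcal C}$-a.e.\ $\omega$ no two holonomy vectors in $\Lambda_\omega$ are positively proportional. Fix a period chart and two relative homology classes $\alpha\neq\beta$ that could be represented by saddle connections. If $\alpha$ and $\beta$ are not proportional as classes, then the condition $\omega(\alpha)\wedge\omega(\beta)=0$ is a nontrivial real-analytic equation in period coordinates (it is quadratic), so it cuts out a Lebesgue-null set. If instead $\alpha=q\beta$ in homology, then two distinct saddle connections would have rationally proportional classes, which for saddle connections forces $q=\pm1$. The remaining case is $\omega(\alpha)=\omega(\beta)$, i.e.\ parallel saddle connections of the same length; these give no extra ray and do not affect $\Lambda^{\mathrm{red}}$ beyond identifying equal vectors. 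Since there are countably many pairs $(\alpha,\beta)$, almost every $\omega$ has $\Lambda^{\mathrm{red}}_\omega=\Lambda_\omega$ as sets. The equality $c^{\mathrm{red}}(\mathcal C)=c(\mathcal C)$ then follows from the integral formula.
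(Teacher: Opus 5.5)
Your proposal is correct and follows essentially the paper's route. The paper likewise checks that $\Lambda^{\mathrm{red}}_\omega$ satisfies the Eskin--Masur hypotheses (equivariance, the quadratic upper bound, and $L^{1+\varepsilon}$ integrability, the last two by domination by $\Lambda_\omega$), then invokes their Theorems~2.1 and~2.2, and obtains $c^{\mathrm{red}}(\mu)>0$ from a ball $B(0,R)$ that meets $\Lambda^{\mathrm{red}}_\omega$ on a set of positive measure.

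The only difference is the Masur--Veech case: the paper cites Eskin--Masur--Zorich (Proposition~7.4), whereas you give a direct period-coordinate argument. That argument is valid, but it rests on a fact you assert without proof: classes of saddle connections are primitive in $H_1(S,\Sigma;\mathbb Z)$, which is what forces $q=\pm1$. This holds because the boundary $z_j-z_i$ is primitive for distinct endpoints, and a saddle-connection loop is a homologically nonzero simple closed curve.
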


The asymptotic conclusion equivalently implies that
\[
\lim_{R\to\infty}\frac{1}{\pi R^2}D_R(\omega) = c^{\mathrm{red}}(\mu)
\]
for $\mu$-almost every $\omega\in\mathcal{M}$.

\begin{proof}
    Let $B(0,R)\subset\mathbb R^2$ denote the closed ball of radius $R$
    centered at the origin. The Siegel--Veech formula and the quadratic asymptotics follow from~\cite{EskinMasur2001}. More precisely, to apply their results to $\Lambda_\omega^{\mathrm{red}}$, we verify the following three assumptions listed in~\cite[Section~2]{EskinMasur2001}:
    \begin{enumerate}
        \item[\textup{(A)}] $A\Lambda_\omega^{\mathrm{red}} = \Lambda_{A\omega}^{\mathrm{red}}$ for every $A\in\mathrm{SL}(2,\mathbb R)$;

        \item[\textup{(B)}] for every $\omega\in\mathcal M$, there exists a constant $c>0$
        such that, for every $R>1$,
        \[
            \#\left(\Lambda_\omega^{\mathrm{red}}\cap B(0,R)\right)
            \le cR^2,
        \]
        where $c$ can be chosen uniformly as $\omega$ varies over any compact
        subset of $\mathcal M$;

        \item[\textup{($C_\mu$)}] there exist $R>0$ and $\varepsilon>0$ such that
        \[
            \omega\longmapsto
            \#\left(\Lambda_\omega^{\mathrm{red}}\cap B(0,R)\right)
        \]
        belongs to $L^{1+\varepsilon}(\mathcal M,\mu)$.
    \end{enumerate}

    The first assumption follows directly from the definition of $\Lambda_\omega^{\mathrm{red}}$.

    For~(B) and~($C_\mu$), note that
    \[
        \#\left(\Lambda_\omega^{\mathrm{red}}\cap B(0,R)\right)
        \le
        \#\left(\Lambda_\omega\cap B(0,R)\right).
    \]
    By~\cite[Theorem~5.4]{EskinMasur2001}, the right-hand side is bounded by a multiple of $R^2$, and hence~(B) follows. At the end of the same section, they also show that for some $\varepsilon>0$,
    \[
        \left(\omega\longmapsto \#\left(\Lambda_\omega\cap B(0,R)\right)\right)
        \in L^{1+\varepsilon}(\mathcal M,\mu).
    \]
    Hence~($C_\mu$) follows as well.

    Therefore, \cite[Theorems~2.1 and~2.2]{EskinMasur2001} give the quadratic asymptotics and the Siegel--Veech formula for $\Lambda_\omega^{\mathrm{red}}$, with a constant $c^{\mathrm{red}}(\mu)\ge0$. Moreover, since
    \[
        \mathcal M
        =
        \bigcup_{R\in\mathbb N}
        \{\omega\in\mathcal M:
        \Lambda_\omega^{\mathrm{red}}\cap B(0,R)\neq\varnothing\},
    \]
    there exists $R>0$ for which the set on the right has positive $\mu$-measure. Taking a nonnegative $f\in C_c^\infty(\mathbb R^2)$ that is positive on $B(0,R)$, we have $\int_{\mathcal M} \widehat f^{\mathrm{red}}(\omega)\,d\mu(\omega)>0$ and hence $c^{\mathrm{red}}(\mu)>0$.

    When $\mu=\mu_{\mathcal C}$ is the Masur--Veech measure on a connected component $\mathcal C$ of $\mathcal{H}^1(\kappa)$, \cite[Proposition~7.4]{EskinMasurZorich2003} implies that, for $\mu_{\mathcal C}$-almost every $\omega\in\mathcal C$, there are no two parallel saddle connections of different lengths. Hence $\Lambda_\omega^{\mathrm{red}}=\Lambda_\omega$ for $\mu_{\mathcal C}$-almost every $\omega$. Comparing the corresponding
    Siegel--Veech formulas gives $c^{\mathrm{red}}(\mathcal C)=c(\mathcal C)$.
\end{proof}

Set
\[
    V:=\{(x,y)\in\mathbb R^2:0<x<1\}.
\]
For $\omega\in\mathcal{M}$, let
\[
    \mathbb S_\omega
    :=
    \left\{
        \frac{y}{x}:(x,y)\in\Lambda_\omega^{\mathrm{red}}\cap V
    \right\}.
\]
The \emph{slope point process} $\xi_\mathcal{M}$ associated with $\mu$ (or, equivalently, with $\mathcal M$) is the random variable
\[
    \xi_{\mathcal M}:\mathcal M\to\mathcal N(\mathbb R),
    \qquad
    \xi_{\mathcal M}(\omega)
    :=
    \sum_{s\in \mathbb S_\omega}
    \delta_{s},
\]
where $\omega\in\mathcal M$ is distributed according to $\mu$. Write
\[
    P_{\mathcal M}:=(\xi_{\mathcal M})_\ast\mu,
    \qquad
    E_{\mathcal M}[\cdot]
    :=
    \int_{\mathcal N(\mathbb R)}
    \cdot\,dP_{\mathcal M}(\zeta)
\]
for its distribution and expectation, respectively.

\begin{lemma}\label{lem:slope-point-process}
     The slope point process $\xi_{\mathcal{M}}$ is simple and stationary with intensity $\frac{c^{\mathrm{red}}(\mu)}{2}$.
\end{lemma}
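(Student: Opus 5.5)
Three things need to be checked: simplicity, stationarity, and the value of the intensity. The tools are the unipotent subgroup $h_s$ and the Siegel--Veech formula for $\Lambda^{\mathrm{red}}$ from Lemma~\ref{lem:SV-quadratic-short-holonomy-vectors}.

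\emph{Simplicity.} By construction, $\Lambda^{\mathrm{red}}_\omega$ contains at most one vector on each ray from the origin. Two vectors in $V$ with the same slope $y/x$ and $x>0$ lie on the same ray. Hence the map $(x,y)\mapsto y/x$ is injective on $\Lambda^{\mathrm{red}}_\omega\cap V$, and every point of $\xi_{\mathcal M}(\omega)$ has mass one. Local finiteness follows because, for a bounded interval $I$, the set $\{(x,y)\in V: y/x\in I\}$ is bounded, and $\Lambda_\omega$ is discrete.

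\emph{Stationarity.} We use the matrix $h_s$, which sends $(x,y)\mapsto(x,y-sx)$. It preserves $V$ and the $x$-coordinate, and it shifts every slope by $-s$. By property (A) in the proof of Lemma~\ref{lem:SV-quadratic-short-holonomy-vectors}, $\Lambda^{\mathrm{red}}_{h_s\omega}=h_s\Lambda^{\mathrm{red}}_\omega$. Therefore
\[
\mathbb S_{h_s\omega}=\mathbb S_\omega-s,
\qquad\text{that is,}\qquad
\xi_{\mathcal M}(h_s\omega)=\tau_s\,\xi_{\mathcal M}(\omega).
\]
Since $\mu$ is $h_s$-invariant, $(\tau_s)_\ast P_{\mathcal M}=(\xi_{\mathcal M}\circ h_s)_\ast\mu=P_{\mathcal M}$ for every $s\in\mathbb R$, which is stationarity. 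We should also confirm that $\xi_{\mathcal M}$ is measurable. This follows by writing $\zeta(I)$ as a Siegel--Veech-type transform $\widehat f^{\mathrm{red}}$ of an indicator function, which is measurable.

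\emph{Intensity.} Let $f=\chi_T$ with $T=\{(x,y):0<x<1,\ 0\le y/x\le 1\}$. This is a bounded triangle of area $1/2$. Then $\zeta([0,1])=\widehat f^{\mathrm{red}}(\omega)$ for $\zeta=\xi_{\mathcal M}(\omega)$. The Siegel--Veech formula of Lemma~\ref{lem:SV-quadratic-short-holonomy-vectors} gives
\[
E_{\mathcal M}\bigl[\zeta([0,1])\bigr]=c^{\mathrm{red}}(\mu)\operatorname{Leb}_2(T)=\frac{c^{\mathrm{red}}(\mu)}{2}<\infty .
\]
By \eqref{equ:intensity-of-stationary-pp} with $L=1$, the intensity is $c^{\mathrm{red}}(\mu)/2$.

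The main obstacle is minor: we must justify the equivariance $\Lambda^{\mathrm{red}}_{h_s\omega}=h_s\Lambda^{\mathrm{red}}_\omega$. This holds because linear maps send rays to rays and preserve the length order of vectors along each ray.
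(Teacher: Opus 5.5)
Your proposal is correct and follows essentially the same route as the paper. Simplicity comes from the one-vector-per-ray property of $\Lambda^{\mathrm{red}}_\omega$, stationarity from $\xi_{\mathcal M}(h_s\omega)=\tau_s\xi_{\mathcal M}(\omega)$ together with the $h_s$-invariance of $\mu$, and the intensity from the Siegel--Veech formula of Lemma~\ref{lem:SV-quadratic-short-holonomy-vectors} applied to the triangle of area $1/2$. You also spell out local finiteness, measurability, and the equivariance $\Lambda^{\mathrm{red}}_{h_s\omega}=h_s\Lambda^{\mathrm{red}}_\omega$, which the paper leaves implicit.
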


\begin{proof}
    Since $\mathbb S_\omega$ consists of distinct slopes, $\xi_{\mathcal{M}}$ is simple. The stationarity follows from the horocycle flow invariance of $\mu$. For every $s\in\mathbb R$, 
    $
    h_s = 
    \begin{pmatrix}
    1 & 0\\
    -s & 1
    \end{pmatrix}$
    preserves $\mathbb S_\omega$ and sends a vector of slope $k$ to a vector of slope $k - s$. Then $\tau_s\xi_{\mathcal M}(\omega) = \xi_{\mathcal M}(h_{s}\omega)$, where $\tau_s$ is defined in~\eqref{equ:translate-notation}. Since $\mu$ is $h_s$-invariant, $\xi_{\mathcal M}(h_{s}\omega)$ has the same distribution as $\xi_{\mathcal M}(\omega)$. Hence $\xi_{\mathcal M}$ is stationary.

    Consider $ D:=\{(x,y)\in\mathbb R^2:0<x<1,\ 0\le y\le x\}$. By Lemma~\ref{lem:SV-quadratic-short-holonomy-vectors},
    \[
        E_{\mathcal M}\left[\zeta([0,1])\right]
        =\int_{\mathcal{N}(\mathbb R)}\zeta([0,1])dP_\mathcal{M}(\zeta)
        =\int_{\mathcal M}\#(\Lambda_\omega^{\mathrm{red}}\cap D)\,d\mu(\omega)
        =\frac{c^{\mathrm{red}}(\mathcal C)}2.
    \]
    Thus the intensity of $\xi_{\mathcal M}$ is $\frac{c^{\mathrm{red}}(\mu)}{2}$.
\end{proof}

For a bounded Borel set $B\subset\mathbb R$, we use the notation
\[
    \xi^R_\omega(B):
    \theta\longmapsto\xi^R_\omega(\theta)(B),
    \qquad
    \xi_{\mathcal M}(B):
    \omega\longmapsto\xi_{\mathcal M}(\omega)(B)
\]
for the corresponding random variables.

\subsection{Point process limit in a fixed stratum}\label{subsec:fixed-stratum-convergence}

In this subsection, we prove Theorem~\ref{thm:direction-point-process}. In fact, Theorem ~\ref{thm:direction-point-process} holds whenever $\mu$ satisfies~\ref{ass:circle-limit} and so we deduce Theorem ~\ref{thm:direction-point-process} from the following theorem applied to $\mu=\mu_\mathcal C$.

\begin{theorem}\label{thm:fixed-stratum-point-process}
Assume that $\mu$ satisfies~\ref{ass:circle-limit}. For $\mu$-almost every $\omega\in\mathcal M$,
\[
    R^2\xi^{R}_{\omega} 
    \xrightarrow{d} \xi_{\mathcal M},
    \qquad\text{as }R\to\infty.
\]
\end{theorem}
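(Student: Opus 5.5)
We use the criterion of Theorem~\ref{thm:characterization-point-process-convergence}. Fix pairwise disjoint bounded intervals $B_1,\ldots,B_m\subset\mathbb R$ that are $\xi_{\mathcal M}$-continuity sets and integers $r_1,\ldots,r_m\ge0$. Since finite unions of intervals suffice (every bounded continuity set can be approximated by these), it is enough to show
\[
\frac1{2\pi}\mathrm{Leb}\bigl\{\theta:\ \#(R^2\xi^R_\omega(\theta)\cap B_\ell)=r_\ell,\ 1\le\ell\le m\bigr\}\longrightarrow \mu\bigl\{\omega':\ \xi_{\mathcal M}(\omega')(B_\ell)=r_\ell\ \forall \ell\bigr\}.
\]

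\textbf{Geometric translation.} A point of $R^2\xi^R_\omega(\theta)$ in a fixed bounded window $[-T,T]$ corresponds to a reduced saddle connection $v$ with $|v|\le R$ whose angle $\varphi$ satisfies $|\varphi-\theta|\le T/R^2$. Set $t=2\log R$ and rotate by $r_{-\theta}$ so that $\theta$ becomes the horizontal direction; after this we apply $g_t$. A vector $r_{-\theta}v=(|v|\cos\alpha,|v|\sin\alpha)$ with $\alpha=\varphi-\theta=O(R^{-2})$ is mapped to approximately $(|v|/R,\ R|v|\alpha)$. Its first coordinate lies in $(0,1]$, and its slope is $R^2\tan\alpha\approx R^2\alpha$. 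So, up to an error that vanishes as $R\to\infty$ uniformly on the window, the points of $R^2\xi^R_\omega(\theta)$ in $[-T,T]$ are the slopes of vectors of $\Lambda^{\mathrm{red}}_{g_tr_{-\theta}\omega}$ in $V$ with slope in $[-T,T]$. Here we use that $g_t r_{-\theta}$ commutes with taking reduced vectors (property (A) in Lemma~\ref{lem:SV-quadratic-short-holonomy-vectors}), and that angles are distinct exactly when reduced vectors are. The sign convention (that is, $g_t$ versus $g_{-t}$, and $r_\theta$ versus $r_{-\theta}$) should be matched with~\ref{ass:circle-limit}, using $\theta\mapsto-\theta$ invariance of Lebesgue measure.

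\textbf{Approximation step.} Define $F(\omega')=\chi\{\#(\mathbb S_{\omega'}\cap B_\ell)=r_\ell\ \forall\ell\}$, which is a function of counts of $\Lambda^{\mathrm{red}}_{\omega'}$ in the regions $W_\ell=\{(x,y):0<x<1,\ y/x\in B_\ell\}$. We must replace the curved and slightly perturbed region from the previous step by the $W_\ell$ exactly. To do this, sandwich: for small $\varepsilon$, let $W_\ell^{\pm\varepsilon}$ be slightly enlarged and shrunk versions, also adjusting the condition $x\le1$ to $x\le1\pm\varepsilon$. For $R$ large, the event for $\theta$ is contained in, and contains, corresponding events defined via counts in $W_\ell^{\pm\varepsilon}$ of $g_tr_{-\theta}\omega$, up to the event that some vector lies in the thin boundary layer $W_\ell^{+\varepsilon}\setminus W_\ell^{-\varepsilon}$.

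\textbf{Applying the circle limit.} Apply~\ref{ass:circle-limit} to the bounded measurable indicator functions built from counts in $W^{\pm\varepsilon}_\ell$ and in the boundary layers. The circle averages converge to the $\mu$-integrals of these indicators. By the Siegel--Veech formula of Lemma~\ref{lem:SV-quadratic-short-holonomy-vectors}, the $\mu$-probability that a reduced vector lies in a boundary layer is at most $c^{\mathrm{red}}(\mu)\,\mathrm{Leb}_2(\text{layer})$. This tends to $0$ as $\varepsilon\to0$, since $\partial B_\ell$ is finite for intervals. The layer near $x=1$ also has small area.

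Letting $R\to\infty$ and then $\varepsilon\to0$ gives the limit. This holds for all $\omega$ in the full-measure set of generic points, and the set is independent of the $B_\ell$, since~\ref{ass:circle-limit} is stated for all bounded measurable $f$.

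\textbf{Main obstacle.} The main obstacle is controlling the approximation uniformly in $\theta$. The map from angles to slopes is only approximately linear, and vectors with $|v|$ slightly above $R$ (or first coordinate near $1$ after flowing) could enter or leave the window. Both effects must be absorbed into the $\varepsilon$-boundary layers so that the circle average of the bad event is asymptotically small. Some care is also needed with the ambiguity between $|v|\le R$ and $v_x\le R$ after rotation. These differ by a factor $\cos\alpha=1-O(R^{-4})$, which is again absorbed by the $\varepsilon$-enlargement.
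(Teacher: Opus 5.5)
Your proposal is correct and takes essentially the paper's route: reduce to finite-dimensional distributions on intervals, renormalize the thin wedge by $g_{-2\log R}r_{-\theta}$ onto the triangle $T_I$, bound the discrepancy region with the Siegel--Veech formula, and conclude with the circle-limit assumption ($\ast$). Your fixed-$\varepsilon$ sandwich plays the role of the paper's union $E^*_{t_0}=\bigcup_{t\ge t_0}E_t$ of symmetric differences, your consistent use of $\Lambda^{\mathrm{red}}$ via equivariance is if anything slightly more careful than the paper's counts over all of $\Lambda_\omega$, and the uniformity in $\theta$ you flag as the main obstacle is automatic because the rotated wedge $r_{-\theta}A^\theta_R(I)$ does not depend on $\theta$.
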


We prove this by computing the finite dimensional distributions of each; see Theorem~\ref{thm:characterization-point-process-convergence}.  Towards this we define the following.

For $R>0$, $\theta\in[0,2\pi)$, and a bounded interval
$I\subset\mathbb R$, define
\[
    N(\omega,\theta,R,I)
    :=
    \#\left\{
    v\in\Lambda_\omega:
    |v|<R,\ 
    \arg v-\theta\in\frac{1}{R^2}I
    \right\}.
\]
For $I=[a,b]\subset\mathbb R$, set
\begin{equation}\label{equ:triangle-in-plane}
    T_I
    :=
    \left\{
    (x,y)\in\mathbb R^2:
    0< x<1,\ 
    ax\leq y\leq bx
    \right\}.
\end{equation}

Then computing the finite dimensional distributions is handled by the following proposition which generalizes Theorem 3.2 of \cite{AthreyaChaika2012}.

\begin{proposition}\label{prop:joint-count-convergence}
Let $I_1,\ldots,I_m$ be pairwise disjoint bounded closed intervals in $\mathbb R$, and let $n_1,\ldots,n_m\in\mathbb Z_{\geq 0}$. Suppose that $\omega\in\mathcal H^1(\kappa)$ has circle limit measure $\mu$ with support $\mathcal M$. Then
\[
\begin{aligned}
    &\lim_{R\to\infty}
    \frac{1}{2\pi}
    \operatorname{Leb}
    \left\{
    \theta\in[0,2\pi):
    N(\omega,\theta,R,I_i)=n_i
    \text{ for }1\leq i\leq m
    \right\}
    \\
    &\qquad =
    \mu
    \left\{
    \eta\in\mathcal M:
    \#\left(\Lambda_\eta\cap T_{I_i}\right)=n_i
    \text{ for }1\leq i\leq m
    \right\}.
\end{aligned}
\]
\end{proposition}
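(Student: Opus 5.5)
The plan is to convert the counting condition on $\omega$ at scale $R$ into a condition on the renormalized surface $g_{t}r_{-\theta}\omega$ at unit scale, and then invoke the circle-limit assumption \ref{ass:circle-limit}. Set $e^{t/2}=R$. I will first choose the sign conventions so that, for suitable orientation of the rotation, a holonomy vector $v$ with $|v|<R$ and $\arg v-\theta\in R^{-2}I$ is carried by $g_{-t}$ composed with a rotation by $-\theta$ (or $\pi/2-\theta$, depending on which axis is contracted) to a vector $u=(u_x,u_y)$. Writing $v=|v|e^{i(\theta+\phi)}$ with $\phi\in R^{-2}I$, rotation gives $(|v|\cos\phi,|v|\sin\phi)$. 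The diagonal flow scales the first coordinate by $R^{-1}$ and the second by $R$, so
\[
u_x=\frac{|v|\cos\phi}{R}\in(0,1),\qquad u_y=R|v|\sin\phi .
\]
Since $\sin\phi=\phi+O(R^{-6})$, we get $u_y/u_x=R^2\tan\phi\in I+O(R^{-4})$. Thus $N(\omega,\theta,R,I)$ is, up to boundary effects, equal to $\#(\Lambda_{\eta_\theta}\cap T_I)$ with $\eta_\theta:=g_{-t}r_{-\theta}\omega$, using $\Lambda_{A\omega}=A\Lambda_\omega$. Concretely, for each $\epsilon>0$ and $R$ large I will show the sandwich
\[
\#(\Lambda_{\eta_\theta}\cap T^-_{I,\epsilon})\le N(\omega,\theta,R,I)\le \#(\Lambda_{\eta_\theta}\cap T^+_{I,\epsilon}).
\]
Here $T^\pm_{I,\epsilon}$ are the slightly enlarged and shrunk triangles: slopes in the $\epsilon$-neighborhood of $I$ (respectively the $\epsilon$-interior), with $x\in(0,1+\epsilon)$ (respectively $x\in(\epsilon,1-\epsilon)$). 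The condition $|v|<R$ versus $u_x<1$ differs only by the factor $\cos\phi=1-O(R^{-4})$, so it is absorbed in the $\epsilon$ margin. Near $x=0$ the exact triangle's strict inequality $x>0$ matches $|v|>0$ automatically. The shrunk lower set needs care, and I will handle it by noting that vectors with $u_x<\epsilon$ and slope in a bounded interval lie in a small set.

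Next, apply \ref{ass:circle-limit} to the indicator functions
\[
f^\pm(\eta)=\chi\bigl\{\#(\Lambda_\eta\cap T^\pm_{I_i,\epsilon})=n_i\ \forall i\bigr\}.
\]
These are bounded and measurable. The assumption as stated allows bounded measurable $f$, so no continuity is needed. The measure in the proposition is then squeezed between the quantities
\[
\frac1{2\pi}\mathrm{Leb}\{\theta:\text{counts in }T^\pm\text{ equal }n_i\}.
\]
However, equality of counts is not monotone in the set, so the sandwich must be organized differently. I will use it to bound the symmetric difference: the event $\{N(\omega,\theta,R,I_i)=n_i\ \forall i\}$ differs from $\{\#(\Lambda_{\eta_\theta}\cap T_{I_i})=n_i\ \forall i\}$ only on the set where some $\Lambda_{\eta_\theta}$ meets $T^+_{I_i,\epsilon}\setminus T^-_{I_i,\epsilon}$. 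By \ref{ass:circle-limit} applied to the indicator of $\{\Lambda_\eta\cap S_\epsilon\neq\varnothing\}$, where $S_\epsilon=\bigcup_i (T^+_{I_i,\epsilon}\setminus T^-_{I_i,\epsilon})$, the $\theta$-measure of this bad set converges to $\mu\{\Lambda_\eta\cap S_\epsilon\ne\varnothing\}$. Markov's inequality and the Siegel--Veech formula (Theorem~\ref{thm:Siegel-Veech-formula}) bound this by $c(\mu)\,\mathrm{Leb}_2(S_\epsilon)$, which is $O(\epsilon)$. Applying \ref{ass:circle-limit} once more to the indicator for the exact triangles $T_{I_i}$ gives the claimed limit up to $O(\epsilon)$, and then I let $\epsilon\to0$. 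Note that the right-hand side counts $\Lambda_\eta$ rather than reduced vectors, consistent with the statement.

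I expect the main obstacle to be the geometric bookkeeping in the first step. Getting the rotation and flow conventions to match the slope $y/x$ and the strip $0<x<1$ exactly, including the sign of the slope and the direction of $g_{-t}$ in \ref{ass:circle-limit}, is delicate. It also requires checking that the error terms are uniform in $\theta$, which holds because $I$ is bounded and so $\phi=O(R^{-2})$ uniformly. A secondary subtlety is that $\omega$ itself is fixed, so I cannot use a.e. statements in $\eta$ for the sandwich. This is why every limit is routed through \ref{ass:circle-limit} applied to explicitly defined sets, with the Siegel--Veech bound used only on the limiting measure $\mu$.
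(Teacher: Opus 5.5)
Your proposal is correct and is essentially the paper's proof. You renormalize by $g_{-t}r_{-\theta}$ with $t=2\log R$, show that the image of the counting wedge differs from $T_{I_i}$ only inside a fixed set of small area (your $S_\epsilon$ plays the role of the paper's $E^*_{t_0}=\bigcup_{t\ge t_0}E_t$), bound the $\theta$-measure of the event that $\Lambda_{\eta_\theta}$ meets this set via \ref{ass:circle-limit} and the Siegel--Veech formula, and finally apply \ref{ass:circle-limit} to the exact-count indicator.

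Like the paper, you apply \ref{ass:circle-limit} to indicator functions exactly as that assumption is worded. If circle-limit convergence is only available for continuous test functions (as in Eskin--Masur), one would also have to check that the events involved are $\mu$-continuity sets; the paper's argument leaves this point implicit as well.
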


\begin{proof}
    Let $\lambda$ denote the normalized Lebesgue measure on the circle.

 For an interval $I$, consider $$A^\theta _R(I):=\{v\in\mathbb R^2: \|v\|\le R,\arg(v)-\theta \in \frac{1}{R^2}I\}.$$
    The main idea, as in Theorem 3.2 of \cite{AthreyaChaika2012},  is to show that for any interval $I$, $g_{-t}r_{-\theta}A^\theta _R(I)$ is close to $T_I $
    for $R\gg0$ and $t=2\log(R)$. In particular, counting vectors in
    $\Lambda_\omega\cap A^\theta _R(I) $ is close to counting vectors in  $g_{-t}r_{-\theta}\Lambda_\omega \cap T_I.$

    Thus, if we take disjoint intervals $I_1$,...,$I_m$, and for each $i$, take $f_{n_i}:\mathcal M\to[0,1]$ to be the indicator function of 
    $$\mathcal M_{I_i, n_i} := \{\omega \in \mathcal{M} : \vert{}\Lambda_\omega \cap T_{I_i}\vert{} = n_i\},$$
    then we can relate our counting question to a dynamical one via
    \begin{equation}\label{eqn: key identity}
        \lambda(\theta : N(\omega, \theta, R, I_i)= n_i
    \text{ for }1\leq i\leq m) \approx \int_0^{2\pi} \prod_{i=1}^mf_{n_i}(g_{-t} r_{-\theta}\omega) d\lambda(\theta) 
    \end{equation}
      where $\approx$ denotes that the difference goes to 0 as $R\to \infty$ (or equivalently $t\to\infty$). By using our assumption on expanding circless~\ref{ass:circle-limit}, we would have that 
     \begin{align*}
     \lim_{R\to\infty} \lambda(\theta : N(\omega, \theta, R, I_i)= n_i
    \text{ for }1\leq i\leq m)& =\lim_{t\to\infty} \int_0^{2\pi} \prod_{i=1}^mf_{n_i}(g_{-t} r_{-\theta}\omega) d\lambda(\theta)\\
    &= \int_\mathcal M \prod_{i=1}^mf_{n_i}(\omega) d\mu\\
    &=\mu
    \left\{
    \eta\in\mathcal M:
    \#\left(\Lambda_\eta\cap T_{I_i}\right)=n_i
    \text{ for }1\leq i\leq m
    \right\}
    \end{align*}
    as required.

   The majority of the proof amounts to showing the $\approx$ above for a single interval, that is, Equation~\eqref{eqn: key identity} for a single interval. To this end, define a measure by $\int_{\mathcal{M}}f d\nu_{t} := \int_0^{2\pi}f(g_{-t} r_{-\theta}\omega) d\lambda(\theta)$. We suppress the dependence on $\omega.$

     Let $I=[a,b]$ where $b\ge a\ge 0.$ The case where we allow negative slopes follows by symmetry. Let $E_t$ denote the symmetric difference of $g_{-t}r_{-\theta}A^\theta_R(I)$ and $T_I$ and note that this does not depend on $\theta$ because we have rotated the wedge $r_{-\theta}A^\theta_R(I)$ by $-\theta$.  We will take $t=2\log(R)$ throughout.
For $t_0\in\mathbb R$, consider 
$$E_{t_0}^*:=\cup_{t\ge t_0}E_t.$$ We claim that as $t_0\to\infty$, the Lebesgue measure of this set, that we denote by $m$, tends to 0.

Indeed, note that $E_{t_0}^*\subset U_{t_0}\cup H_{t_0}\cup L_{t_0}$ where we choose $t_0$ so large so that $\frac{b}{R_{t_0}^2}<1$ and where
\begin{itemize}
    \item $U_{t_0}$ is the triangle with vertices $(0,0)$,$(1,b)$, and $(1,R_{t_0}^2\tan\left(\frac{b}{R_{t_0}^2}\right))$ that, informally, is the triangle that captures the error at the top between the largest possible slope of the wedges in $E_{t_0}^*$ at the top of $E_{t_0}$ and the top of $T_I$,
    \item $L_{t_0}$ is the triangle with vertices $(0,0)$,$(1,a)$, and $(1,R_{t_0}^2\tan\left(\frac{a}{R_{t_0}^2}\right))$ that, informally, is the triangle that captures the error at the bottom between the smallest possible slope of the wedges in $E_{t_0}^*$ at the bottom of $E_{t_0}$ and the bottom of $T_I$, and
    \item $H_{t_0}$ which is the rectangle  $[\cos(\frac{b}{R_{t_0}^2}),1]\times [0,R_{t_0}^2\tan\left(\frac{b}{R_{t_0}^2}\right)]$ that, informally, is the horizontal rectangle that captures the error of the wedges in $E_{t_0}^*$ between the left most wedge and the boundary at $x=1$.
\end{itemize}

Since these are simple geometric shapes, we may estimate their contribution. We have
\begin{itemize}
    \item $U_{t_0}$ has height of length $R_{t_0}^2\tan\left(\frac{b}{R_{t_0}^2}\right)-b=R_{t_0}^2\left(\tan\left(\frac{b}{R_{t_0}^2}\right)-\frac{b}{R_{t_0}^2}\right)$ and base of length 1 so that   
    $$m(U_{t_0})=\frac{R_{t_0}^2}{2}\left (\tan\left(\frac{b}{R_{t_0}^2}\right)-\frac{b}{R_{t_0}^2}\right)\le  \frac{R_{t_0}^2}{2}\left(\frac{b}{R_{t_0}^2}\right)^3 =\frac{b^3}{2R_{t_0} ^4},$$
    \item a similar argument on $L_{t_0}$ shows that  $$m(L_{t_0})\le\frac{a^3}{2R_{t_0} ^4},$$ and
    \item $H_{t_0}$ has a base of length of $1-\cos(\frac{b}{R_{t_0}^2})\le \frac{1}{2}\left(\frac{b}{R_{t_0}^2}\right)^2 $ and height of length $R_{t_0}^2\tan\left(\frac{b}{R_{t_0}^2}\right)\le2R_{t_0}^2\frac{b}{R_{t_0}^2}$ to see
    $$m(H_{t_0})\le \frac{b^3}{R_{t_0}^4}.$$
\end{itemize}
 As such, since $E_{t_0}^*\subset U_{t_0}\cup H_{t_0}\cup L_{t_0}$, then the volume of $E_{t_0}^*$ tends to 0 as $t_0\to\infty.$

Now we prove
$$\lim_{t\to\infty }\nu_t\{\omega:\hat{F_t}(\omega)=k\}=\lim_{t\to\infty }\nu_t\{\omega:\hat{\chi}_{T_I}(\omega)=k\}$$
where $F_t$ is the indicator function of $g_{-t}r_{-\theta}A^\theta_R(I)$. We note that, by construction, $\hat{F_t}(g_{-t}r_{-\theta}\omega)=N(\omega,\theta,R,I)$.

Let $I_t=\{\omega\in\mathcal M: \hat F_t\ne\hat \chi_{T_I}\}$ and $I^*_{t_0}=\cup _{t\ge t_0}I_t$.

Note if $\omega\in I_{t_0}^*$, then there is some $t\ge t_0$ such that $\hat F_t(\omega)\ne\hat \chi_{T_I}(\omega)$ and so there is some holonomy vector so that $v\in E_t$. Hence, by the Siegel-Veech formula,

$$\mu(I_{t_0}^*)\le \int_\mathcal M \hat{\chi}_{E_{t_0}^*}\,d\mu= c(\mathcal M)m(E_{t_0}^*).$$

Fix $\varepsilon>0$ and, by potentially choosing $t_0$ larger, assume $m(E_{t_0}^*)<\varepsilon/c(\mathcal M)$ for all $\theta\in[0,2\pi)$, and thus,
$$\mu(I_{t_0}^*)<\varepsilon.$$
Since $\nu_t\to\mu$, we can choose $t_1>t_0$ so that for all $t>t_1,$ we have $\nu_t(I_{t}^*)\le 2\varepsilon$. Now, since $I_t\subseteq I_{t_0}^*$ for $t>t_1>t_0$, then $$\nu_t(I_t)\le \nu_t(I_{t_0}^*)\le 2\varepsilon.$$
We have
$$\nu_t(\omega: \hat F_t(\omega) = k) = \nu_t(\omega \notin I_t : \hat F_t = k) + \nu_t(\omega \in I_t : \hat F_t(\omega) = k)$$ 
and that the second term is bounded above by $\nu_t(I_t)$ and, hence, by $2\varepsilon.$ The first term can be written as 
\begin{align*}
\nu_t(\omega \notin I_t : \hat F_t = k)&=\nu_t(\omega \notin I_t : \hat{\chi}_{T_I}(\omega) = k)\\
&=\nu_t(\omega: 
\hat{\chi}_{T_I}(\omega) = k) - \nu_t(\omega \in I_t : \hat{\chi}_{T_I}(\omega) = k).
\end{align*}

Recalling our goal of 
$$\lim_{t\to\infty }\nu_t\{\omega:\hat{F_t}(\omega)=k\}=\lim_{t\to\infty }\nu_t\{\omega:\hat{\chi_{T_I}}(\omega)=k\},$$
we may consider their difference and use the above observations to deduce
$$|\nu_t\{\omega:\hat{F_t}(\omega)=k\}-\nu_t\{\omega:\hat{\chi_{T_I}}(\omega)=k\}| \le \nu_t\{\omega\in I_t:\hat{\chi}_{T_I}(\omega)=k\}+2\varepsilon.$$
By using the bound on $\nu_t(I_t)$ we get
$$|\nu_t(\omega: \hat F_t(\omega) = k) - \nu_t(\omega: \hat{\chi}_{T_I}(\omega) = k)|\le 4\varepsilon.$$ Passing to the limit, we obtain that the limits can differ by no more than $4\varepsilon$ and hence, must be the same since $\varepsilon$ is arbitrary.

Now assume that $I_1,\ldots,I_m$ are pairwise disjoint bounded closed intervals in $\mathbb R$. The change from one interval to multiple is minor. Let $F_{t,i}$ denote the indicator function of $g_{-t}r_{-\theta}A_R ^\theta(I_i)$ and $I_{t,i}=\{\omega\in \mathcal M: \hat{F}_{t,i}\ne \hat{\chi}_{T_{I_i}}\}$. Let $\varepsilon>0$. By the single interval case, we have that there is some $t_1(i)$ such that  for $t>t_1(i)$, we have $\nu_t(I_{t,i})\le2\varepsilon.$ Letting $t_1$ be the max of the finitely many $t_1(i)$, we have that, for $\tilde I_t :=\cup_{i=1}^m I_{t,i}$, the bound 
$$\nu_t(\tilde I_t)\le \sum_{i=1} ^m \nu_t(I_{t,i})\le2m\varepsilon.$$

Then using this bound as in the single interval case, we can show that the difference

$$|\nu_t(\omega:\hat{F}_{t,i}(\omega)=n_i,\forall i)-\nu_t(\omega:\hat{\chi}_{T_{I_i}}(\omega)=n_i,\forall i)|\le 4m\varepsilon.$$
As such, by taking limits, we have the result
$$\lim_{t\to\infty}\nu_t(\omega:\hat{F}_{t,i}(\omega)=n_i,\forall i) =\lim_{t\to\infty}\nu_t(\omega:\hat{\chi}_{T_{I_i}}(\omega)=n_i,\forall i) .$$

Of course, $\hat{F}_{t,i}(g_{-t}r_{-\theta}\omega)=N(\omega,\theta,R,I_i)$ and so
$$        \lambda(\theta : N(\omega, \theta, R, I_i)= n_i
    \text{ for }1\leq i\leq m) =\nu_t(\omega:\hat{F}_{t,i}(\omega)=n_i,\forall i)$$
and 
$$\nu_t(\omega:\hat{\chi}_{T_{I_i}}(\omega)=n_i,\forall i)=\int_\mathcal M\prod_{i=1}^mf_{n_i}\,d\nu_t=\int_0^{2\pi} \prod_{i=1}^mf_{n_i}(g_{-t} r_{-\theta}\omega) d\lambda(\theta).$$ Thus,  this establishes Equation~\eqref{eqn: key identity} and finishes the proof.
\end{proof}

\begin{proof}[Proof of Theorem~\ref{thm:fixed-stratum-point-process}]

This follows by noticing that Proposition~\ref{prop:joint-count-convergence} amounts to computing the finite dimensional distributions of the point processes $R^2\xi^R_\omega$  and $\xi_\mathcal M.$ We supply some details for completeness. 

Recall that  we define direction point process by
\[
\xi_\omega^R(\theta):= 
\widetilde{\Theta}^R_\omega - \theta 
= 
\left\{\varphi-\theta: \varphi\in\widetilde\Theta_\omega^R\right\}.
\]
where $\omega\in\mathcal M$, $R>0$, and $\widetilde \Theta_\omega^R$ is the $2\pi$-periodic extension of saddle connection directions on $\omega$ of length at most $R$. 

It suffices to consider intervals since they generate the Borel $\sigma$-algebra.  If $I_1,\ldots,I_m$ are pairwise bounded closed intervals, then the finite dimensional distribution of $R^2\xi^R_\omega$ is given by
\begin{align*}\frac{1}{2\pi}\text{Leb}(\{\theta : (R^2\xi^R_\omega (\theta) )(I_i)&=n_i, \text{ for }1\le i\le m\})= \frac{1}{2\pi}\text{Leb}(\{\theta : \xi^R_\omega (\theta )\left(\frac{1}{R^2}I_i\right)=n_i, \text{ for }1\le i\le m\})\\
&=\frac{1}{2\pi}\text{Leb}(\{\theta : \#\{\varphi-\theta\in\frac{1}{R^2}I_i:\varphi \in \widetilde\Theta_\omega^R \}= n_i, \text{ for }1\le i\le m\})\\
&=\frac{1}{2\pi}\text{Leb}(\{\theta : \#\{\varphi\in\frac{1}{R^2}I_i+\theta:\varphi \in \widetilde\Theta_\omega^R \}= n_i, \text{ for }1\le i\le m\})\\
&=\frac{1}{2\pi}\text{Leb}(\{\theta : \#\{v\in\Lambda_\omega:|v|<R, \varphi=\arg v\in \frac{1}{R^2}I_i+\theta \}= n_i, \text{ for }1\le i\le m\})\\
&=\frac{1}{2\pi}\text{Leb}(\{\theta:N(\omega,\theta,R,I_i)=n_i \text{ for }1\le i\le m\})
\end{align*}
where the second to last equality follows since $\varphi\in \widetilde\Theta_\omega^R$ means exactly that there is a saddle connection $v$ of length at most $R$ about $\theta$.

Similarly, we recall that, for $V=(0,1)\times\mathbb R$ and  $\omega$ distributed according to $\mu_{\mathcal M}$, we defined the slope point process $\xi_{\mathcal M}$ by
\[
\xi_{\mathcal M}(\omega) = \left\{ \frac{v_y}{v_x}: v=(v_x,v_y)\in\Lambda^{\mathrm{red}}_\omega\cap V \right\}.
\]

For pairwise disjoint closed and bounded intervals $I_1,\ldots,I_m$ in $\mathbb R$, if
 the horizontal of the saddle connection $v$ is between 0 and 1 and that the slope of $v$ is in one of the $I_i$, we have that $v\in T_{I_i}$. As such, the finite dimensional distribution of $\xi_\mathcal M$ is given by
 $$\mu\{\eta\in\mathcal M:(\xi_\mathcal M)(\eta)(I_i)=n_i\text{ for }1\le i\le m \}=\mu
    \left\{
    \eta\in\mathcal M:
    \#\left(\Lambda_\eta\cap T_{I_i}\right)=n_i
    \text{ for }1\leq i\leq m
    \right\}.$$

We can thus apply Proposition~\ref{prop:joint-count-convergence} and since the finite dimensional distributions of $R^2\xi_\omega ^R$ converge to $\xi_\mathcal M$, Theorem~\ref{thm:characterization-point-process-convergence} gives that     
$$R^2\xi^{R}_{\omega} 
    \xrightarrow{d} \xi_{\mathcal M},
    \qquad\text{as }R\to\infty$$
 
\end{proof}

\section{Gap distributions}

\subsection{Direction and slope gap distributions}
Recall that $\mu$ is an ergodic measure on $\mathcal{H}^1(\kappa)$ and $\mathcal{M}$ is its support. For $\omega\in\mathcal M$ and $R>0$, $\xi^R_\omega$ denotes the direction point process and $P_{\omega,R}$ denotes its distribution on $\mathcal{N}_s(\mathbb R)$. 

Write 
\[ 
\widetilde\Theta_\omega^R = \{\theta_i:i\in\mathbb Z\}, \qquad \theta_i<\theta_{i+1}, \qquad \theta_0\le0<\theta_1,
\]
and extend the gaps in~\eqref{equ:original-gap-sequence} periodically by setting 
\[
\Delta_i^\omega:=\theta_{i+1}-\theta_i, \qquad i\in\mathbb Z.
\]

We first relate these gaps to the gap-sequence distribution of $\xi^R_\omega$; see Section~\ref{subsec:Palm-distribution}. Let $P_{\omega,R}^0$ denote the Palm distribution of $\xi^R_\omega$ on $\mathcal{N}^0_s(\mathbb R)$.

\begin{lemma}\label{lem:expression-related-to-counting}
    For every $\omega\in\mathcal{M}$, every $R>0$ such that $D_R(\omega)>0$, every $m\in\mathbb N$, $j_1,\ldots,j_m\in\mathbb Z$ and $t_1,\ldots,t_m\ge 0$,
    \begin{equation}\label{equ:expression-related-to-counting}
        \begin{aligned}
            &P_{\omega,R}^0\left(\Delta_{j_1}\xi\le t_1,\ldots, \Delta_{j_m}\xi\le t_m\right)\\
            &=
            \frac{1}{D_R(\omega)} \#\left\{ 1\le i\le D_R(\omega): \Delta^\omega_{i+j_\ell}\le t_\ell \text{ for }1\le \ell\le m \right\}.
        \end{aligned}
    \end{equation}
\end{lemma}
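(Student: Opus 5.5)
This is a direct specialization of Lemma~\ref{lem:periodicPP}(3). Set
\[
\zeta^R_\omega:=\sum_{\varphi\in\widetilde\Theta^R_\omega}\delta_\varphi .
\]
As in the proof of Lemma~\ref{lem:angle-point-process}, this is a simple $2\pi$-periodic counting measure with $\zeta^R_\omega([0,2\pi))=D_R(\omega)>0$. Moreover $\xi^R_\omega(\theta)=\tau_\theta\zeta^R_\omega$ with $\theta$ uniform on $[0,2\pi)$. So the hypotheses of Lemma~\ref{lem:periodicPP} hold with $L=2\pi$ and $N=D_R(\omega)$. Part (3) of that lemma then gives
\[
P^0_{\omega,R}\bigl(\Delta_{j_1}\le t_1,\ldots,\Delta_{j_m}\le t_m\bigr)=\frac1{D_R(\omega)}\#\bigl\{1\le i\le D_R(\omega):\Delta_{i+j_\ell}\zeta^R_\omega\le t_\ell\ \text{for all }\ell\bigr\}.
\]

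It remains to identify $\Delta_i\zeta^R_\omega$ with $\Delta_i^\omega$. The labeling of the points of $\zeta^R_\omega$ in the definition of the gap sequence is $x_i(\zeta)<x_{i+1}(\zeta)$ with $x_0\le 0<x_1$. The labeling $\theta_i$ of $\widetilde\Theta^R_\omega$ uses exactly the same convention, so $x_i(\zeta^R_\omega)=\theta_i$ and therefore $\Delta_i\zeta^R_\omega=\theta_{i+1}-\theta_i=\Delta_i^\omega$ for all $i\in\mathbb Z$.

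One point needs care: whether the indices $1\le i\le D_R(\omega)$ match the original gaps in \eqref{equ:original-gap-sequence}, where $\theta_1$ is the smallest direction in $[0,2\pi)$. If $0\in\Theta^R_\omega$, then $\theta_0=0$, and the periodic labeling is shifted by one relative to \eqref{equ:original-gap-sequence}. This does not affect the count. By $2\pi$-periodicity, $\Delta^\omega_{i+D_R(\omega)}=\Delta^\omega_i$, so the set of tuples $(\Delta^\omega_{i+j_1},\ldots,\Delta^\omega_{i+j_m})$ as $i$ ranges over any $D_R(\omega)$ consecutive integers is the same. The count is therefore independent of the labeling offset.

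The proof of Lemma~\ref{lem:periodicPP}(3) itself used that the points of $\zeta\cap[0,L)$ are $x_1,\dots,x_N$. This is valid when $x_0=0$: the points in $[0,L)$ are then $x_0,\dots,x_{N-1}$, which is again a full period, and the same invariance argument applies. Beyond checking this labeling convention, I expect no real obstacle; the lemma follows by substituting these identifications.
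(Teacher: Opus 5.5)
Your proposal is correct and matches the paper's proof: both realize $\xi^R_\omega(\theta)=\tau_\theta\zeta^R_\omega$ with $\theta$ uniform on $[0,2\pi)$ and apply Lemma~\ref{lem:periodicPP}(3) with $L=2\pi$ and $N=D_R(\omega)$. Your extra check of the case $0\in\Theta^R_\omega$ is a valid refinement of a detail the paper passes over. In that case the points in $[0,2\pi)$ are $x_0,\dots,x_{N-1}$ rather than $x_1,\dots,x_N$, and periodicity of the gaps makes the count independent of this offset.
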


\begin{proof}
    In the proof of Lemma~\ref{lem:angle-point-process}, we showed that $\xi^R_\omega=\tau_\theta\zeta^R_\omega$, where $\theta$ is uniformly distributed on $[0,2\pi)$. The claim now follows directly from Lemma~\ref{lem:periodicPP}~(3).
\end{proof}

We now turn to the gap-sequence distribution of the slope point process $\xi_\mathcal{M}$. Recall that $P_\mathcal{M}$ and $E_{\mathcal M}$ denote its distribution and expectation on $\mathcal{N}_s(\mathbb R)$. We write
\[
P_{\mathcal M}^0\quad \text{and}\quad E_{\mathcal M}^0
\]
for the Palm distribution of $\xi_\mathcal{M}$ and its expectation on $\mathcal{N}_s^0(\mathbb R)$, respectively.

The gap-sequence distribution of $\xi_\mathcal{M}$ is $\Delta_\ast P_{\mathcal M}^0$ by definition. For $m\in\mathbb N$ and $j_1,\ldots,j_m\in\mathbb Z$, following~\eqref{equ:finite-dimensional-distribution}, define
\begin{equation}\label{equ:slope-pp-distribution-function}
    G_{\mathcal{M};\, j_1,\ldots,j_m}(t_1,\ldots,t_m):= P^0_{\mathcal{M}}(\Delta_{j_\ell}\zeta\le t_\ell,\ 1\le \ell\le m),
\end{equation}
where $t_1,\ldots,t_m\in\mathbb R$.

For later use, we record an expression for this distribution function. Recall that
\[
    \mathbb S_\omega
    :=
    \left\{
        \frac{y}{x}:(x,y)\in\Lambda_\omega^{\mathrm{red}}\cap V
    \right\}.
\]
We write this as
\[
\mathbb S_\omega=(s_i(\omega))_{i\in\mathbb Z},
\qquad
s_i(\omega)<s_{i+1}(\omega),
\qquad
s_0(\omega)\le 0<s_1(\omega)
\]
and have that the slope point process may be written as $\xi_{\mathcal M}(\omega) = \sum_{i\in\mathbb Z}\delta_{s_i(\omega)}$.

Let $\lambda_\mathcal{M}$ denote the intensity of $\xi_\mathcal{M}$. Taking an interval $I$ with $\operatorname{Leb}(I)=1$ and setting $B:=I$ in the refined Campbell formula~\eqref{equ:computation-Palm-distribution}, we obtain
\begin{equation}\label{equ:rephrase-slope-gap-distribution} 
\begin{aligned} 
&G_{\mathcal M;\,j_1,\ldots,j_m}(t_1,\ldots,t_m) \\ 
&\quad
= 
\frac{1}{\lambda_{\mathcal M}} 
E_{\mathcal M} \left[ \sum_{s_i(\omega)\in I} 
\boldsymbol{1}_{
    \left\{
    \Delta_{j_\ell}\zeta\le t_\ell
    \text{ for }1\le\ell\le m
    \right\}
}
\left(\tau_{s_i(\omega)}\xi_{\mathcal M}(\omega)\right) \right] \\ 
&\quad
= 
\frac{1}{\lambda_{\mathcal M}} 
\int_{\mathcal M} 
\#\big\{s_i(\omega)\in I: 
s_{i+j_\ell+1}(\omega) - s_{i+j_\ell}(\omega) 
\le t_\ell \text{ for }1\le\ell\le m \big\}
\,d\mu(\omega).
\end{aligned}
\end{equation}

\subsection{Existence of joint gap distributions}\label{subsec:convergence-joint-gap-distributions}

We first record the following convergence of intensities.

\begin{lemma}\label{lem:convergence-of-intensities}
For $\mu$-almost every $\omega\in\mathcal{M}$, the intensity of $R^2\xi^{R}_\omega$ converges to the intensity of $\xi_{\mathcal{M}}$ as $R\to\infty$.
\end{lemma}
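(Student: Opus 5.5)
The plan is to compute both intensities explicitly from results already established and then compare them. By Lemma~\ref{lem:angle-point-process}, the direction point process $\xi^R_\omega$ is simple and stationary with intensity $D_R(\omega)/(2\pi)$. Using the rescaling rule recorded at the end of Section~\ref{subsec:Palm-distribution}, with $a=R^2$, the intensity of $R^2\xi^R_\omega$ is
\[
\lambda_R(\omega)=\frac{D_R(\omega)}{2\pi R^2}.
\]
On the other side, Lemma~\ref{lem:slope-point-process} gives the intensity of $\xi_{\mathcal M}$ as $\lambda_{\mathcal M}=c^{\mathrm{red}}(\mu)/2$.

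It therefore suffices to show that, for $\mu$-almost every $\omega$,
\[
\lim_{R\to\infty}\frac{D_R(\omega)}{\pi R^2}=c^{\mathrm{red}}(\mu).
\]
The key observation is that directions of saddle connections of length at most $R$ are in bijection with rays through $\Lambda_\omega$ meeting $B(0,R)$. Each such ray contains exactly one reduced vector, namely its shortest one, and that vector has length at most $R$ precisely when the ray contains some saddle connection of length at most $R$. Hence
\[
D_R(\omega)=\#\{v\in\Lambda^{\mathrm{red}}_\omega:|v|\le R\}
\]
exactly, for every $R$. The required limit is then the quadratic asymptotic in Lemma~\ref{lem:SV-quadratic-short-holonomy-vectors}; this is the reformulation recorded right after that lemma.

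Combining the two computations,
\[
\lambda_R(\omega)=\frac12\cdot\frac{D_R(\omega)}{\pi R^2}\longrightarrow\frac{c^{\mathrm{red}}(\mu)}{2}=\lambda_{\mathcal M}
\]
on the full-measure set where the Eskin--Masur asymptotic for $\Lambda^{\mathrm{red}}_\omega$ holds.

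There is essentially no obstacle here. The only point needing care is the bookkeeping of orientations: directions range over $[0,2\pi)$, so both $v$ and $-v$ count as distinct directions. This matches $\Lambda^{\mathrm{red}}_\omega$, which is symmetric under $v\mapsto -v$ and contains both orientations. The slope process, by contrast, uses only $V$ (with $x>0$), which accounts for the factor $1/2$ in $\lambda_{\mathcal M}$. Checking that these two factors of $2$ match, and that the boundary convention $|v|<R$ versus $|v|\le R$ is irrelevant in the limit, completes the proof.
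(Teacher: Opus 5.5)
Your proposal is correct and follows the paper's proof essentially step for step: the intensity $D_R(\omega)/(2\pi R^2)$ from Lemma~\ref{lem:angle-point-process} and the rescaling rule, the exact identity $D_R(\omega)=\#(\Lambda^{\mathrm{red}}_\omega\cap B(0,R))$, the quadratic asymptotic of Lemma~\ref{lem:SV-quadratic-short-holonomy-vectors}, and the value $c^{\mathrm{red}}(\mu)/2$ from Lemma~\ref{lem:slope-point-process}. One small correction to your closing heuristic: the factor $1/2$ in $\lambda_{\mathcal M}$ is the area $\operatorname{Leb}_2(T_{[0,1]})=1/2$, which is the same sector-area factor as in $\pi R^2/(2\pi)$, while restricting to $V$ is what keeps $v$ and $-v$ from both being counted. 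This does not affect your argument, since it relies only on the lemmas.
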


\begin{proof}
By Lemma~\ref{lem:angle-point-process} and the properties of rescaled point processes recalled at the end of Section~\ref{subsec:Palm-distribution}, the intensity of $R^2\xi^{R}_\omega$ is $\frac{D_R(\omega)}{2\pi R^2}$. Since $D_R(\omega)$ counts the number of distinct directions of saddle connections of length at most $R$,
\[
D_R(\omega) = \#\left(\Lambda_\omega^{\mathrm{red}}\cap B(0,R)\right).
\]
Hence, Lemma~\ref{lem:SV-quadratic-short-holonomy-vectors} implies that for $\mu$-almost every $\omega$,
\[
    \lim_{R\to\infty}\frac{D_R(\omega)}{2\pi R^2}
    =
    \frac{c^{\mathrm{red}}(\mu)}{2}.
\]
By Lemma~\ref{lem:slope-point-process}, the right-hand side is precisely the intensity of $\xi_{\mathcal M}$.
\end{proof}

We now prove the convergence in Theorem~\ref{thm:joint-gap-distributions}(1) at every continuity point of $G_{\mathcal M;\,j_1,\ldots,j_m}$. We will see in the next section that $G_{\mathcal M;\,j_1,\ldots,j_m}$ is always continuous. 

\begin{proposition}\label{prop:convergence-joint-gap-distributions}
    Assume that $\mu$ satisfies~\ref{ass:circle-limit}. For $\mu$-almost every $\omega\in\mathcal{M}$, every collection of distinct $j_1,\ldots,j_m\in\mathbb Z$, if $G_{\mathcal M;\,j_1,\ldots,j_m}$ is continuous at $(t_1,\ldots,t_m)\in[0,\infty)^m$, then
    \begin{equation}\label{equ:convergence-joint-gap-distributions}
        \begin{aligned} 
        &\lim_{R\to\infty} \frac{1}{D_R(\omega)} 
        \#\left\{ 1\le i\le D_R(\omega): R^2\Delta_{i+j_\ell}\le t_\ell \text{ for }1\le \ell\le m \right\} \\ 
        &\hspace{5cm} 
        =
        G_{\mathcal{M};\, j_1,\ldots,j_m}(t_1,\ldots,t_m). 
        \end{aligned}
    \end{equation}
\end{proposition}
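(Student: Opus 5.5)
The plan is to combine the point process convergence of Theorem~\ref{thm:fixed-stratum-point-process} with the convergence of intensities in Lemma~\ref{lem:convergence-of-intensities}, and then apply Lemma~\ref{lem:continuous-mapping-theorem}. Fix a full-measure set of $\omega\in\mathcal M$ on which both statements hold. Since the conditions involve only countably many objects, one set works simultaneously for all $m$ and all $j_1,\ldots,j_m$. Along any sequence $R_n\to\infty$, the processes $R_n^2\xi^{R_n}_\omega$ are simple and stationary by Lemma~\ref{lem:angle-point-process} and the rescaling remarks. Their intensities converge to $\lambda_{\mathcal M}=c^{\mathrm{red}}(\mu)/2\in(0,\infty)$. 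The limit $\xi_{\mathcal M}$ is simple and stationary by Lemma~\ref{lem:slope-point-process}. Lemma~\ref{lem:continuous-mapping-theorem} therefore gives $\Delta_\ast P^0_{R_n^2\xi^{R_n}_\omega}\to\Delta_\ast P^0_{\mathcal M}$ weakly on $[0,\infty)^{\mathbb Z}$. Since the sequence $R_n$ is arbitrary, this convergence holds as $R\to\infty$.

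Next, I will identify the left-hand side of~\eqref{equ:convergence-joint-gap-distributions} with a finite-dimensional distribution of the prelimit Palm measures. The Palm distribution of $R^2\xi^R_\omega$ is the pushforward of $P^0_{\omega,R}$ under $\zeta\mapsto R^2\zeta$. Its gaps are $R^2\Delta_i$. By Lemma~\ref{lem:expression-related-to-counting}, when $D_R(\omega)>0$ (which holds for large $R$),
\[
P^0_{R^2\xi^R_\omega}\bigl(\Delta_{j_\ell}\le t_\ell,\ 1\le\ell\le m\bigr)
=\frac{1}{D_R(\omega)}\#\bigl\{1\le i\le D_R(\omega): R^2\Delta^\omega_{i+j_\ell}\le t_\ell \text{ for } 1\le\ell\le m\bigr\}.
\]
This is exactly the quantity in~\eqref{equ:convergence-joint-gap-distributions}.

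To finish, I will project the weak convergence to coordinates $(j_1,\ldots,j_m)$ using the continuous mapping theorem. This gives weak convergence of the laws of $(\Delta_{j_1},\ldots,\Delta_{j_m})$ on $[0,\infty)^m$, and the limit has distribution function $G_{\mathcal M;\,j_1,\ldots,j_m}$. By the standard multivariate portmanteau fact, weak convergence implies convergence of distribution functions at every continuity point of the limiting distribution function. This yields the claim at continuity points $(t_1,\ldots,t_m)$.

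Two points require care. The first is a boundary issue: the set $\{u\le t\}$ has boundary contained in $\{u_\ell=t_\ell \text{ for some } \ell\}$, and continuity of $G$ at $t$ guarantees that this boundary is null for the limit law. At coordinates with $t_\ell=0$, both sides vanish because gaps are positive, so that case is harmless. The second point is the main technical obstacle: checking the hypotheses of Lemma~\ref{lem:continuous-mapping-theorem}. This amounts to verifying that the limit intensity is positive and finite, which follows from $c^{\mathrm{red}}(\mu)>0$ in Lemma~\ref{lem:SV-quadratic-short-holonomy-vectors}.
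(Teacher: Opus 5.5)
Your proposal is correct and follows essentially the same route as the paper. The point process convergence from Theorem~\ref{thm:fixed-stratum-point-process} and the intensity convergence from Lemma~\ref{lem:convergence-of-intensities} feed into Lemma~\ref{lem:continuous-mapping-theorem}, and then the portmanteau theorem at continuity points together with Lemma~\ref{lem:expression-related-to-counting} (after rescaling by $R^2$) identifies the limit. Your extra remarks on sequences $R_n\to\infty$, the boundary of the orthant, and the case $t_\ell=0$ are harmless elaborations of that same argument.
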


\begin{proof}
By Theorem~\ref{thm:fixed-stratum-point-process}, which uses Assumption~\ref{ass:circle-limit}, for $\mu$-almost every $\omega\in\mathcal M$,
\[
    R^2\xi^R_\omega
    \xrightarrow{d}
    \xi_{\mathcal M}
    \qquad\text{as }R\to\infty.
\]
By Lemma~\ref{lem:convergence-of-intensities}, their intensities converge as well. Recall that $P_{R^2\xi^R_\omega}^0$ denotes the Palm distribution of
$R^2\xi^R_\omega$. By Lemma~\ref{lem:continuous-mapping-theorem}, we have the weak convergence:
\begin{equation}\label{equ:large-R-gap-convergence}
    \Delta_\ast P_{R^2\xi^R_\omega}^0
    \longrightarrow
    \Delta_\ast P_{\mathcal M}^0.
\end{equation}

Since $G_{\mathcal{M};\, j_1,\ldots,j_m}$ is continuous  at $(t_1,\ldots,t_m)\in[0,\infty)^m$, the Portmanteau theorem implies that as $R\to\infty$,
\[
    \begin{aligned}
    \left(\Delta_\ast P_{R^2\xi^R_\omega}^0\right)
    \left(
    u_{j_1}\le t_1,\ldots,u_{j_m}\le t_m
    \right)
    &= 
    \left(\Delta_\ast P_{\omega,R}^0\right)
    \left(
    R^{2}u_{j_1}\le t_1,\ldots, R^{2}u_{j_m}\le t_m
    \right)\\
    &\longrightarrow
    G_{\mathcal{M};\, j_1,\ldots,j_m}(t_1,\ldots,t_m).
\end{aligned}
\]
By Lemma~\ref{lem:expression-related-to-counting}
\[
\begin{aligned}
&\left(\Delta_\ast P_{\omega,R}^0\right)
\left(
R^{2}u_{j_1}\le t_1,\ldots, R^{2}u_{j_m}\le t_m
\right)\\
&=
\frac{1}{D_R(\omega)} 
\#\left\{ 1\le i\le D_R(\omega): R^2\Delta_{i+j_\ell}\le t_\ell \text{ for }1\le \ell\le m \right\}.
\end{aligned}
\]
Then the above convergence gives ~\eqref{equ:convergence-joint-gap-distributions}.
\end{proof}

\section{Regularity of gap distributions}

\subsection{Lipschitz continuity of gap distributions}\label{subsec:gap-regularity}

We continue to work with the ergodic measure $\mu$ on $\mathcal{H}^1(\kappa)$ and its support $\mathcal{M}$.

We study the regularity of the finite-dimensional distribution functions $G_{\mathcal{M};\, j_1,\ldots,j_m}$, as promised in the proof of Proposition~\ref{prop:convergence-joint-gap-distributions}. We begin with the one-dimensional distribution $G_{\mathcal M;\,0}$.

\begin{proposition}\label{prop:one-dimensional-Lipschitz}
    The function $G_{\mathcal M;\,0}$ is continuous on $[0,\infty)$, and for every $0<a\le t<t'$,
    \[
    0\le G_{\mathcal M;\,0}(t')-G_{\mathcal M;\,0}(t)
    \le
    2e
    \left(2+\frac{1}{a}\right)
    (t'-t).
    \]
\end{proposition}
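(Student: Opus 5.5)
The plan is to work with the Palm/Campbell expression \eqref{equ:rephrase-slope-gap-distribution} for $m=1$, $j_1=0$. For $0<a\le t<t'$ and a unit interval $I$ it gives
\[
G_{\mathcal M;\,0}(t')-G_{\mathcal M;\,0}(t)=\frac{1}{\lambda_{\mathcal M}}\int_{\mathcal M}F_{t,t'}(\omega)\,d\mu(\omega),
\qquad
F_{t,t'}(\omega):=\#\{i:\ s_i(\omega)\in I,\ t<s_{i+1}(\omega)-s_i(\omega)\le t'\}.
\]
Here $\lambda_{\mathcal M}=c^{\mathrm{red}}(\mu)/2$ by Lemma~\ref{lem:slope-point-process}. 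Monotonicity, and hence the lower bound $0$, is immediate. Continuity at $t=0$ follows from simplicity: $P^0_{\mathcal M}(\Delta_0=0)=0$, so $G_{\mathcal M;\,0}(t)\to0$ as $t\downarrow0$. Continuity on $(0,\infty)$ will follow from the Lipschitz bound, so everything reduces to bounding $\int F_{t,t'}\,d\mu$ by $\lambda_{\mathcal M}\cdot 2e(2+1/a)(t'-t)$.

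The key step is to use $g_r$-invariance of $\mu$ to replace $F_{t,t'}$ by its average $\int_0^1F_{t,t'}(g_r\omega)\,dr$, and then to bound this average pointwise in $\omega$ by a Siegel--Veech-integrable counting function. Since $g_r(x,y)=(e^{r/2}x,e^{-r/2}y)$, the slopes of $g_r\omega$ are $e^{-r}$ times the slopes of the vectors $(x,y)\in\Lambda^{\mathrm{red}}_\omega$ with $0<x<e^{-r/2}$. The condition $e^{-r}s\in I$ forces $s\in e^rI$, and $e^rI\subset I^\ast$, a fixed interval of length at most $e$. A pair $(v,v')$ of vectors of $\omega$ contributes at time $r$ only if three things hold: both lie in the strip $\{x<e^{-r/2}\}$, they are consecutive in slope there, and $e^{-r}(s'-s)\in(t,t']$. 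For a fixed pair, the set of $r$ satisfying the last condition has length at most $\log(t'/t)\le (t'-t)/t\le (t'-t)/a$.

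So I will bound $\int_0^1F_{t,t'}(g_r\omega)\,dr$ by $\frac{t'-t}{a}$ times the number of pairs that are ever consecutive for $r\in[0,1]$, plus a boundary term. Consecutiveness changes only when a vector leaves the strip as $r$ increases, that is, when a vector with $x\in[e^{-1/2},1)$ drops out. I will charge each pair to its left vector. A vector $v$ of slope in $I^\ast$ with $x<1$ is the left end of one pair initially, and of one new pair each time its current right neighbour exits. I would try to show that this gives at most $2$ pairs per vector on average, via a telescoping or charging argument that assigns each exit to the exiting vector. Integrating over $\mu$ with Lemma~\ref{lem:SV-quadratic-short-holonomy-vectors} then gives
\[
\int_{\mathcal M}\#\{v\in\Lambda^{\mathrm{red}}_\omega:\ 0<x<1,\ y/x\in I^\ast\}\,d\mu=\lambda_{\mathcal M}\,\mathrm{Leb}(I^\ast)\le e\,\lambda_{\mathcal M}.
\]
This is where the factor $e$ comes from. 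The dependence on the fixed $t$ enters only through the factor $1/a$.

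The main obstacle is this combinatorial control of pairs whose consecutiveness changes during $r\in[0,1]$, together with the pairs that are consecutive only on a partial $r$-interval. I expect these transitions to produce the additive constant $2$, in addition to $1/a$. The alternative, in case the charging bound on the time-set is too lossy, is to differentiate in $r$. The rate at which the gap $e^{-r}(s'-s)$ sweeps through $(t,t']$ is at least $t$, which gives the $1/a$ part. Each exit event contributes an error counted by vectors in the thin strip $x\in[e^{-1/2},1)$ times the window length $(t'-t)$, which gives the $O(1)$ part. The final bound comes from multiplying out: per-vector pair count at most $2$, window length at most $e$, and time per pair at most $(2+1/a)(t'-t)$. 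Dividing by $\lambda_{\mathcal M}$ then yields the stated constant $2e(2+1/a)$.
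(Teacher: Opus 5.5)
Your overall route matches the paper's: the Campbell expression over a unit interval, averaging over $r\in[0,1]$ using $g_r$-invariance, the per-pair time bound $\log(t'/t)\le (t'-t)/t$, and charging newly consecutive pairs to vectors that leave the strip. The gap is in the combinatorial step you flag, and as you set it up it cannot be closed. You charge to vectors with slope in the window $I^\ast$ of length $e$. But a pair $(v,w)$ becomes consecutive at some time $r\in(0,1]$ only because some $u$ with $s(v)<s(u)<s(w)$ exits. The only control on $s(w)$ comes from the gap condition, $s(w)\le s(v)+e^{r}t'\le e(1+t')$. So the exiting vectors, and the right endpoints $w$, range over slopes in $(0,e(1+t'))$, not over $I^\ast$.

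Nothing in your argument prevents a single $v$ with slope in $(0,1)$ from having successive right neighbours $u_1,u_2,\ldots$, with slopes $s(v)+e^{r_i}(t+t')/2$ and exit times just after $r_i$. Each of these produces a distinct relevant pair, and when $t>e$ none of the $u_i$ lies in $I^\ast$. Hence ``at most $2$ pairs per vector with slope in $I^\ast$'' fails pointwise, and you give no averaging mechanism that would rescue it.

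The fix is the paper's:
\begin{enumerate}
\item Put all $v,w$ of relevant pairs, and all charged exiting vectors, in $K=\{0<x<1,\ 0\le y/x\le e(1+t')\}$.
\item Bound the initially consecutive pairs by $\#(\Lambda^{\mathrm{red}}_\omega\cap K)$.
\item For the remaining pairs, send $(v,w)$ to the minimal-slope vector $u$ among those strictly between $v$ and $w$ that exit at the last such exit time. This map is injective: $v$ and $w$ are recovered as the nearest neighbours in slope of $u$ among vectors surviving past $\rho(u)$, and $\Lambda^{\mathrm{red}}_\omega$ has one vector per direction.
\end{enumerate}
The Siegel--Veech formula then gives $G_{\mathcal M;0}(t')-G_{\mathcal M;0}(t)\le 2e(1+t')(t'-t)/t$.

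This also corrects your bookkeeping of the constant. The fixed $t$ enters not only through $1/a$ but through the window length $e(1+t')$. The additive $2$ comes from $(1+t+h)/t\le 2+1/t$ when $h=t'-t\le t$. It does not come from transitions or from partially consecutive pairs; the per-pair time bound already covers those, since it never uses consecutiveness. When $h>t\ge a$ the bound above is useless, and you need the separate trivial estimate $G_{\mathcal M;0}(t')-G_{\mathcal M;0}(t)\le 1<h/a$. Your closing computation uses a per-pair time of $(2+1/a)(t'-t)$. That contradicts your own correct bound $(t'-t)/a$, and it is matched to the target constant rather than derived.
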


In what follows, we use the abbreviation $\lambda = \lambda_\mathcal{M}$ for the intensity of $\xi_{\mathcal M}$. Recall that the slope point process $\xi_{\mathcal M}$ is defined by
\[
    \xi_{\mathcal M}(\omega)
    =
    \sum_{s\in\mathbb S_\omega}\delta_s,
\]
for $\omega\in\mathcal M$, where $\mathbb S_\omega$ is the set of slopes of holonomy vectors in $\Lambda^{\mathrm{red}}_\omega\cap V$, with $V=\{(x,y)\in\mathbb R^2:0<x<1\}$.

Recall that $\mathbb{S}_{\omega}$ denotes the set of slopes of holonomy vectors in $\Lambda^{\mathrm{red}}_\omega\cap V$, and $\xi_\mathcal M(\omega)$ is the corresponding counting measure. We write $\mathbb{S}_{\omega}=(s_i(\omega))_{i\in\mathbb Z}$ with $s_i(\omega)<s_{i+1}(\omega)$ and $s_0(\omega)\le 0<s_1(\omega)$.

Fix $t>0$ and $h\ge 0$. Taking $I:=(0,1)$ in~\eqref{equ:rephrase-slope-gap-distribution}, we have
\[
G_{\mathcal M;0}(t + h) - G_{\mathcal M;0}(t) = 
\frac{1}{\lambda} 
E_\mathcal{M}
\big[
N_{t,h}(\omega)
\big],
\]
where
\[
N_{t,h}(\omega)
:= 
\#A_{t,h}(\omega),
\qquad
A_{t,h}(\omega)
:=
\big\{ 
s_i(\omega)\in (0,1): s_{i+1}(\omega)-s_{i}(\omega)\in (t,t+h]
\big\}.
\]
Since $\mu$ is invariant under 
$g_r
=
\begin{pmatrix}
e^{r/2} & 0\\
0 & e^{-r/2}
\end{pmatrix}
$, we may average over $r\in[0,1]$ and obtain
\begin{equation}\label{equ:1-step-rewrite-difference}
G_{\mathcal M;0}(t + h) - G_{\mathcal M;0}(t) = 
\frac{1}{\lambda} 
E_\mathcal{M}
\left[
\int_0^1
N_{t,h}\left(g_{r}\,\omega\right)\,
dr
\right].
\end{equation}

We now estimate the integral in~\eqref{equ:1-step-rewrite-difference}. Write a vector $v\in\mathbb R^2$ as $v=(v_x,v_y)$, and if $v_x\ne 0$, denote its slope by
\[
s(v):=\frac{v_y}{v_x}.
\]
Then $s(g_rv) = e^{-r}s(v)$.

We say that two slopes $s<s'$ are \emph{consecutive} in $\mathbb{S}_{\omega}$ if $s=s_i(\omega)$ and $s' = s_{i+1}(\omega)$ for some $i\in\mathbb Z$.

For $v,w\in\Lambda_\omega^{\mathrm{red}}$, let
\[
R_{t,h}^\omega(v,w)
:=
\left\{
r\in[0,1]:
\begin{array}{l}
s(g_rv)\in A_{t,h}(g_r\omega) \text{ and } s(g_rv)<s(g_rw)\\
\text{ are consecutive slopes in } \mathbb S_{g_r\omega}
\end{array}
\right\}.
\]
Define
\[
\mathcal E_{t,h}(\omega) := \left\{ (v,w)\in\left(\Lambda_\omega^{\mathrm{red}}\right)^2: R_{t,h}^\omega(v,w)\ne\varnothing \right\}.
\]
For $(v,w)\in \mathcal E_{t,h}(\omega)$ and $r\in R_{t,h}^\omega(v,w)$, the condition $s(g_rv)\in A_{t,h}(g_r\omega)$ implies that 
\begin{equation}\label{equ:consecutive-gap-condition}
0<e^{-r}s(v)<1, \qquad t<e^{-r}\bigl(s(w)-s(v)\bigr)\le t+h. 
\end{equation}

\begin{lemma}\label{lem:bound-integral-of-counting}
    For $t>0$ and $h\ge 0$, we have
    \[
    \int_0^1
    N_{t,h}\left(g_{r}\,\omega\right)\,
    dr
    \le
    \#\mathcal{E}_{t,h}(\omega)\frac{h}{t}.
    \]
\end{lemma}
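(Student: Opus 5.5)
The plan is to decompose $N_{t,h}(g_r\omega)$ along pairs of holonomy vectors and apply Fubini. Since $g_r$ maps $\Lambda^{\mathrm{red}}_\omega$ bijectively onto $\Lambda^{\mathrm{red}}_{g_r\omega}$ and preserves the strip condition only partially, it is convenient to index everything by vectors of $\omega$. For each $r\in[0,1]$, every element of $A_{t,h}(g_r\omega)$ is a slope $s_i(g_r\omega)$ that has a successor $s_{i+1}(g_r\omega)$. Both slopes come from unique reduced vectors in $V$, and these vectors are $g_rv$ and $g_rw$ for unique $v,w\in\Lambda^{\mathrm{red}}_\omega$. Thus
\[
N_{t,h}(g_r\omega)=\sum_{(v,w)\in\mathcal E_{t,h}(\omega)}\boldsymbol 1_{R^\omega_{t,h}(v,w)}(r),
\]
where uniqueness of $(v,w)$ for a given slope uses reducedness: there is only one reduced vector per ray. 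Integrating over $r\in[0,1]$ and exchanging sum and integral (all terms are nonnegative) gives
\[
\int_0^1 N_{t,h}(g_r\omega)\,dr=\sum_{(v,w)\in\mathcal E_{t,h}(\omega)}\operatorname{Leb}\bigl(R^\omega_{t,h}(v,w)\bigr).
\]

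It then remains to show $\operatorname{Leb}(R^\omega_{t,h}(v,w))\le h/t$ for each pair. By \eqref{equ:consecutive-gap-condition}, every $r\in R^\omega_{t,h}(v,w)$ satisfies $t<e^{-r}d\le t+h$, where $d:=s(w)-s(v)>0$. Hence $R^\omega_{t,h}(v,w)$ is contained in
\[
\{r: \log(d/(t+h))\le r<\log(d/t)\},
\]
an interval of length
\[
\log\frac{t+h}{t}=\log\Bigl(1+\frac ht\Bigr)\le \frac ht.
\]
Summing over the pairs yields the lemma.

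The one step needing care is the first identity. Since $s(g_r v)=e^{-r}s(v)$ is monotone in $s(v)$, $g_r$ preserves the order of slopes. Consecutiveness is defined within $\mathbb S_{g_r\omega}$, which depends on $r$ through the strip $V$, but the definition of $R^\omega_{t,h}$ already builds this in, so no compatibility issue arises. If $\mathcal E_{t,h}(\omega)$ is infinite, the inequality holds trivially.
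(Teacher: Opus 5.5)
Your proof is correct and follows essentially the same route as the paper. Both write $N_{t,h}(g_r\omega)=\sum_{(v,w)\in\mathcal{E}_{t,h}(\omega)}\chi_{R^\omega_{t,h}(v,w)}(r)$, integrate over $r\in[0,1]$, and bound each $R^\omega_{t,h}(v,w)$ by the interval $\{r: t<e^{-r}(s(w)-s(v))\le t+h\}$, whose length is $\log(1+h/t)\le h/t$. The only cosmetic difference is that the paper dominates $\chi_{R^\omega_{t,h}(v,w)}$ by the indicator of this interval before integrating, which avoids having to discuss the measurability of $R^\omega_{t,h}(v,w)$ itself.
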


\begin{proof}
    For each $(v,w)\in \mathcal{E}_{t,h}(\omega)$, we have
    \[
    R_{t,h}^\omega(v,w)\subset I_{t,h}(v,w):= \{r:\,t<e^{-r}(s(w) - s(v))\le t+h\}.
    \]
    It follows that
    \[
    N_{t,h}(g_r\omega) = \# A_{t,h}(g_r\omega) = \sum_{(v,w)\in\mathcal{E}_{t,h}(\omega)}\chi_{R^\omega_{t,h}(v,w)}(r)\le \sum_{(v,w)\in \mathcal{E}_{t,h}(\omega)}\chi_{I_{t,h}(v,w)}(r),
    \]
    and hence
    \[
    \int_0^1
    N_{t,h}\left(g_{r}\,\omega\right)\,
    dr
    \le
    \sum_{(v,w)\in \mathcal{E}_{t,h}(\omega)}
    \mathrm{Leb}(I_{t,h}(v,w)).
    \]
    Note that
    \[
    \mathrm{Leb}(I_{t,h}(v,w))
    =
    \log \left(\frac{t+h}{t}\right).
    \]
    Therefore,
    \[
    \int_0^1
    N_{t,h}\left(g_{r}\,\omega\right)\,
    dr
    \le 
    \sum_{(v,w)\in \mathcal{E}_{t,h}(\omega)}
    \log \left(\frac{t+h}{t}\right) 
    \le
    \#\mathcal{E}_{t,h}(\omega)\frac{h}{t}. 
    \]
\end{proof}

It remains to bound $\#\mathcal{E}_{t,h}(\omega)$. For that, we introduce a domain
\[
K_{t,h}
:=
\left\{
(x,y)\in\mathbb R^2:
0<x<1,\; 0\le\frac{y}{x}\le e(1+t + h)
\right\}.
\]

\begin{lemma}\label{lem:bound-pair-subset}
    For $t\ge 0$ and $h\ge 0$, we have
    \[
    \#\mathcal{E}_{t,h}(\omega)
    \le 
    2\,\#
    \left(
    \Lambda^{\mathrm{red}}_\omega \cap K_{t,h}
    \right).
    \]
\end{lemma}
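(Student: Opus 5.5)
The plan is to define an injection-like map from $\mathcal E_{t,h}(\omega)$ into $\Lambda^{\mathrm{red}}_\omega\cap K_{t,h}$ that is at most two-to-one. Concretely, I will show that for every $(v,w)\in\mathcal E_{t,h}(\omega)$ both $v$ and $w$ lie in $K_{t,h}$, and that $w$ is determined by $v$ (and $v$ by $w$). Then the map $(v,w)\mapsto v$ is injective into $\Lambda^{\mathrm{red}}_\omega\cap K_{t,h}$, and the factor $2$ gives slack; alternatively one can count via both coordinates.

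\emph{Step 1: $v,w\in K_{t,h}$.} Fix $r\in R^\omega_{t,h}(v,w)$. Since $s(g_rv)$ and $s(g_rw)$ belong to $\mathbb S_{g_r\omega}$, the vectors $g_rv,g_rw$ lie in $V$, i.e.\ $0<e^{r/2}v_x<1$ and similarly for $w$. Hence $0<v_x<e^{-r/2}\le 1$ and $0<w_x<1$. By \eqref{equ:consecutive-gap-condition}, $0<e^{-r}s(v)<1$, so $0<s(v)<e^r\le e\le e(1+t+h)$. Also $e^{-r}s(w)=e^{-r}s(v)+e^{-r}(s(w)-s(v))\le 1+t+h$, so $0<s(w)\le e^r(1+t+h)\le e(1+t+h)$. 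Thus both $v$ and $w$ are in $K_{t,h}$.

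\emph{Step 2: determination.} Given $v$, the partner $w$ must be such that $s(g_rw)$ is the successor of $s(g_rv)$ in $\mathbb S_{g_r\omega}$ for some $r\in[0,1]$. The difficulty is that $\mathbb S_{g_r\omega}$ changes with $r$ (vectors enter or leave $V$ as $r$ varies), so the successor may depend on $r$. This is the main obstacle. I would handle it by noting that $g_r$ preserves the order of slopes, and that membership in $g_r^{-1}V$ is the condition $0<v_x<e^{-r/2}$, a shrinking strip. If $r<r'$ both yield successors $w,w'$ of $v$ with $w\neq w'$, then $w,w'$ both lie in the strip for $r$ (the strip for $r'$ is contained in it), so both slopes are present in $\mathbb S_{g_r\omega}$, and $s(w)\le s(w')$ by the successor property at $r$. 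Symmetrically, at $r'$, $w$ is no longer in the strip but $w'$ is. So the successors are monotone in $r$ but need not be unique. If uniqueness fails, I would fall back on bounding $\#\mathcal E_{t,h}(\omega)$ by a charging argument. Each pair $(v,w)$ is consecutive at some $r$, and the pairs consecutive at a fixed $r$ form a chain in which each element of $\Lambda^{\mathrm{red}}_\omega\cap K_{t,h}$ appears at most twice (once as left and once as right endpoint). The hope is that, as $r$ increases, new pairs arise only when a vector leaves the strip. Each such event merges two gaps into one and creates one new pair, and it can be charged to the departing vector, which lies in $K_{t,h}$ and departs once.

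\emph{Step 3: count.} Summing, the pairs present at $r=0$ number at most $\#(\Lambda^{\mathrm{red}}_\omega\cap K_{t,h})$, restricted to those involving slopes in the relevant range. The pairs created later number at most one per departing vector, hence at most $\#(\Lambda^{\mathrm{red}}_\omega\cap K_{t,h})$ more. This gives the factor $2$. I need to check carefully that all vectors involved in these merges, namely the departing ones lying between $v$ and $w$ in slope, are in $K_{t,h}$. This holds because their slopes lie between $s(v)$ and $s(w)$, so Step 1's bound applies.
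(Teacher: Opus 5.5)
Your final charging argument is correct and is essentially the paper's proof. The paper splits $\mathcal{E}_{t,h}(\omega)$ into pairs already consecutive in $\mathbb{S}_\omega$, charged injectively to $v$, and pairs that become consecutive later. The latter are charged injectively to a vector $u$ between them that leaves the strip at the exit time $\rho(u)=-2\log u_x$ when the pair first becomes consecutive. Your initial plan, that $w$ is determined by $v$, is indeed false in general, and you were right to abandon it.

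The only point you gloss over is that several vectors between $s(v)$ and $s(w)$ can leave simultaneously (equal $x$-coordinates). In that case you must pick one of them, for example the one of minimal slope as the paper does. Injectivity then holds because each departing $u$ lies in exactly one gap of $\mathbb{S}_{g_{\rho(u)}\omega}$.
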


\begin{proof}
    We first claim that $\mathcal{E}_{t,h}(\omega)\subset \left(\Lambda^{\mathrm{red}}_\omega \cap K_{t,h}\right)^2$. Indeed, for $(v,w)\in \mathcal{E}_{t,h}(\omega)$ and $r\in R_{t,h}^\omega(v,w)$, the condition~\eqref{equ:consecutive-gap-condition} implies that
    \[
    0<s(v)<e^r\le e
    \]
    and
    \[
    s(w) = s(v) + e^r(t+h)\le e^r(1 + t+h)\le e(1 + t+h).
    \]
    By definition of $\xi_\mathcal{M}(g_r\omega)$, the vectors $g_rv$ and $g_rw$ are in $V$ and hence
    \[
    0< v_x,\;w_x<e^{-r/2}\le 1.
    \]
    It follows that $v,\; w\in \Lambda^{\mathrm{red}}_\omega \cap K_{t,h}$. This proves the claim.
    
    We next decompose $\mathcal{E}_{t,h}(\omega)$ as
    \[
    \mathcal{E}_{t,h}(\omega) = 
    \mathcal{E}_{t,h}^0(\omega)
    \bigsqcup 
    \mathcal{E}_{t,h}^1(\omega).
    \]
    The subset $\mathcal{E}_{t,h}^0(\omega)$ consists of $(v,w)\in \mathcal{E}_{t,h}(\omega)$ such that $s(v)<s(w)$ are already consecutive slopes in $\mathbb S_{\omega}$, and $\mathcal{E}_{t,h}^1(\omega)$ is the complement.

    For $\mathcal{E}_{t,h}^0(\omega)$, by consecutiveness, we have
    \[
    \#\mathcal{E}_{t,h}^0(\omega)
    \le
    \#\left(\Lambda^{\mathrm{red}}_\omega \cap K_{t,h}\right).
    \]

    To analyze $\#\mathcal{E}^1_{t,h}(\omega)$, we track along $(g_r\omega)_{0\le r\le 1}$ which slopes disappear and when new pairs become consecutive in $\mathbb{S}_{g_r\omega}$. Define
    \[
    \rho(u) = -2\log u_x
    \]
    for $u\in V$. Hence $\rho(u)$ is the time at which the orbit $(g_ru)_{r\ge 0}$ leaves $V$. In particular, for $u\in\Lambda_\omega^{\mathrm{red}}\cap K_{t,h}$, the slope $s(g_ru)$ belongs to $\mathbb S_{g_r\omega}$ if and only if $0\le r<\rho(u)$.
    
    For $(v,w)\in \mathcal{E}^1_{t,h}(\omega)$, since $s(v)$ and $s(w)$ are not consecutive in $\mathbb S_\omega$, there is at least one slope in $\mathbb S_\omega$ strictly between $s(v)$ and $s(w)$. Set
    \[
     r(v,w)
    :=
    \max\left\{
    \rho(u):
    u\in \Lambda_\omega^{\mathrm{red}}\cap K_{t,h},\;
    s(v)<s(u)<s(w)
    \right\}.
    \]
    Thus $r(v,w)$ is the first time at which $s(g_rv)$ and $s(g_rw)$ become consecutive slopes in $\mathbb S_{g_r\omega}$. Set
    \[
    D(v,w)
    :=
    \left\{
    u\in \Lambda_\omega^{\mathrm{red}}\cap K_{t,h}:
    s(v)<s(u)<s(w),\;
    \rho(u)=r(v,w)
    \right\}.
    \]
    Thus $D(v,w)$ consists of the holonomy vectors that have slope lying strictly between $s(v)$ and $s(w)$, and belong to $\mathbb S_{g_r\omega}$ if and only if $0\le r <  r(v,w)$.
    
    Then define
    \[
    \phi:\mathcal{E}_{t,h}^1(\omega)
    \longrightarrow
    \Lambda^{\mathrm{red}}_\omega \cap K_{t,h}
    \]
    by letting $\phi(v,w)$ be the holonomy vector $u\in D(v,w)$ such that
    \[
    s(u)=\min\{s(u'):u'\in D(v,w)\}.
    \]

    We claim that $\phi$ is injective. Let $u=\phi(v,w)$. Since $u\in D(v,w)$, we have $\rho(u)=r(v,w)$. Among the slopes in $\mathbb S_{g_{\rho(u)}\omega}$, $s(v)$ is the largest one below $s(u)$ and $s(w)$ is the smallest one above $s(u)$. More precisely,
    \[
    s(v)
    =
    \max\left\{
    s(z):
    z\in\Lambda_\omega^{\mathrm{red}}\cap K_{t,h},\;
    \rho(z)>\rho(u),\;
    s(z)<s(u)
    \right\},
    \]
    and
    \[
    s(w)
    =
    \min\left\{
    s(z):
    z\in\Lambda_\omega^{\mathrm{red}}\cap K_{t,h},\;
    \rho(z)>\rho(u),\;
    s(z)>s(u)
    \right\}.
    \]
    Hence $u=\phi(v,w)$ uniquely determines $s(v)$ and $s(w)$.
    Since there is a unique holonomy vector in each direction by definition of $\Lambda_\omega^{\mathrm{red}}$,
    $u=\phi(v,w)$ also uniquely determines $(v,w)$. Therefore $\phi$ is injective, and hence
    \[
    \#\mathcal{E}_{t,h}^1(\omega)
    \le
    \#\left(\Lambda^{\mathrm{red}}_\omega \cap K_{t,h}\right).
    \]
    
    Combining the two estimates, we obtain
    \[
    \#\mathcal{E}_{t,h}(\omega)
    \le
    2\#\left(\Lambda^{\mathrm{red}}_\omega \cap K_{t,h}\right).
    \]
\end{proof}

We now complete the proof of Proposition~\ref{prop:one-dimensional-Lipschitz}.

\begin{proof}
Combining~\eqref{equ:1-step-rewrite-difference}, Lemma~\ref{lem:bound-integral-of-counting} and Lemma~\ref{lem:bound-pair-subset}, we have
\[
\begin{aligned}
G_{\mathcal M;0}(t+h)-G_{\mathcal M;0}(t)
&=
\frac{1}{\lambda}
E_\mathcal M
\left[
\int_0^1 N_{t,h}(g_r\omega)\,dr
\right] \\
&\le
\frac{2}{\lambda}
E_\mathcal M
\left[
\#\left(
\Lambda_\omega^{\mathrm{red}}\cap K_{t,h}
\right)
\right]
\frac{h}{t}\\
&=
\frac{2}{\lambda} 
c^{\mathrm{red}}(\mu)
\operatorname{Leb}(K_{t,h})\frac{h}{t} 
= 
2e\,
(1+t+h)\frac{h}{t},
\end{aligned}
\]
where the second equality follows from the Siegel--Veech formula in
Lemma~\ref{lem:SV-quadratic-short-holonomy-vectors}, and the last equality
uses
\[
\operatorname{Leb}(K_{t,h})=\frac e2(1+t+h)
\]
and $\lambda=c^{\mathrm{red}}(\mu)/2$ from
Lemma~\ref{lem:slope-point-process}.

Now fix $a>0$ and consider $t\ge a$. If $0<h\le t$, then
\[
G_{\mathcal M;0}(t+h)-G_{\mathcal M;0}(t)
\le
2e
\left(2+\frac{1}{a}\right)h.
\]
If $h>t\ge a$, then
\[
G_{\mathcal M;0}(t+h)-G_{\mathcal M;0}(t) 
= 
P^0_\mathcal{M}\left(\Delta_0\zeta\in(t,t+h]\right)
\le
1
<
\frac{h}{a}.
\]
Therefore, we have
\[
0\le
G_{\mathcal M;0}(t')-G_{\mathcal M;0}(t)
\le
2e
\left(2+\frac{1}{a}\right)
(t'-t)
\]
for all $a\le t<t'$. Thus $G_{\mathcal M;0}$ is Lipschitz continuous on $[a,\infty)$ for every $a>0$. Its continuity at $0$ follows from the right continuity of distribution functions.
\end{proof}

We now turn to higher-dimensional distribution functions.

\begin{corollary}\label{cor:joint-gap-local-Lipschitz}
The function $G_{\mathcal M;\,j_1,\ldots,j_m}$ is continuous on $[0,\infty)^m$, and for every $a>0$ and $(t_\ell), (t'_\ell)\in[a,\infty)^m$, 
\[
    \left|G_{\mathcal M;\,j_1,\ldots,j_m}(t_1',\ldots,t_m')
    -
    G_{\mathcal M;\,j_1,\ldots,j_m}(t_1,\ldots,t_m)\right| 
    \le 
    2e
    \left(2+\frac{1}{a}\right) \sum_{\ell = 1}^{m} |t_\ell'-t_\ell|.
\]
\end{corollary}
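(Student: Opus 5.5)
The plan is to reduce the multivariate estimate to the one-dimensional bound of Proposition~\ref{prop:one-dimensional-Lipschitz} by changing one coordinate at a time. I would combine a telescoping decomposition with the elementary inequality $|P(A\cap B)-P(A\cap B')|\le P(B\triangle B')$. Shift-stationarity of the gap-sequence distribution, \eqref{equ:interval-stationarity}, is what makes every marginal look like $G_{\mathcal M;\,0}$.

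First, telescoping. Fix $a>0$ and $(t_\ell),(t'_\ell)\in[a,\infty)^m$. For $0\le k\le m$ define the intermediate points $u^{(k)}:=(t'_1,\ldots,t'_k,t_{k+1},\ldots,t_m)$. Then
\[
G_{\mathcal M;\,j_1,\ldots,j_m}(t')-G_{\mathcal M;\,j_1,\ldots,j_m}(t)=\sum_{k=1}^m\Big(G_{\mathcal M;\,j_1,\ldots,j_m}(u^{(k)})-G_{\mathcal M;\,j_1,\ldots,j_m}(u^{(k-1)})\Big).
\]
The points $u^{(k)}$ and $u^{(k-1)}$ differ only in the $k$-th coordinate. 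Using the definition \eqref{equ:slope-pp-distribution-function}, the $k$-th term is $P^0_{\mathcal M}(A_k\cap\{\Delta_{j_k}\le t'_k\})-P^0_{\mathcal M}(A_k\cap\{\Delta_{j_k}\le t_k\})$, where $A_k$ is the event imposing the other $m-1$ constraints. Its absolute value is at most
\[
P^0_{\mathcal M}\big(\Delta_{j_k}\in(\min(t_k,t'_k),\max(t_k,t'_k)]\big)=\big|G_{\mathcal M;\,j_k}(t'_k)-G_{\mathcal M;\,j_k}(t_k)\big|.
\]

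Second, stationarity. By \eqref{equ:interval-stationarity} with shift $-j_k$, we have $G_{\mathcal M;\,j_k}=G_{\mathcal M;\,0}$. Since both $t_k$ and $t'_k$ lie in $[a,\infty)$, Proposition~\ref{prop:one-dimensional-Lipschitz} bounds each term by $2e(2+1/a)|t'_k-t_k|$. Summing over $k$ gives the stated Lipschitz estimate.

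Third, continuity on all of $[0,\infty)^m$, including the boundary where some $t_\ell=0$. The same telescoping shows that $|G_{\mathcal M;\,j_1,\ldots,j_m}(t')-G_{\mathcal M;\,j_1,\ldots,j_m}(t)|\le\sum_k|G_{\mathcal M;\,0}(t'_k)-G_{\mathcal M;\,0}(t_k)|$ for arbitrary $t,t'\in[0,\infty)^m$; this step does not use the lower bound $a$. By Proposition~\ref{prop:one-dimensional-Lipschitz}, $G_{\mathcal M;\,0}$ is continuous on $[0,\infty)$, so the right-hand side tends to $0$ as $t'\to t$, which yields continuity everywhere.

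The argument is essentially routine. The step that deserves care is the stationarity identity $G_{\mathcal M;\,j}=G_{\mathcal M;\,0}$. It requires that $P^0_{\mathcal M}$ be concentrated on $\mathcal N^0_{s,\mathrm{bi}}(\mathbb R)$, so that all gaps are defined; this holds by \eqref{equ:bi-infinite-concentration}, since $\xi_{\mathcal M}$ is simple and stationary with finite positive intensity (Lemma~\ref{lem:slope-point-process}). With that in place, shift-stationarity follows from Theorem~\ref{thm:inversion-formula}.
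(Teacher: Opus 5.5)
Your proposal is correct and follows the paper's proof. The paper cites the marginal inequality $|G(t')-G(t)|\le\sum_\ell|G_{j_\ell}(t'_\ell)-G_{j_\ell}(t_\ell)|$ from Durante--Sempi, whereas you derive it directly by telescoping one coordinate at a time. After that, both arguments use \eqref{equ:interval-stationarity} to replace each marginal by $G_{\mathcal M;\,0}$ and invoke Proposition~\ref{prop:one-dimensional-Lipschitz} for both the Lipschitz bound and continuity.
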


\begin{proof}
By the inequality from~\cite[Lemma~1.2.16]{DuranteSempi2016}, we obtain
\begin{equation}\label{equ:distribution-differences}
\left| 
G_{\mathcal M;\,j_1,\ldots,j_m}(t_1',\ldots,t_m')
-
G_{\mathcal M;\,j_1,\ldots,j_m}(t_1,\ldots,t_m)\right| 
\le 
\sum_{\ell=1}^m \left| G_{\mathcal M;\,j_\ell}(t_\ell') - G_{\mathcal M;\,j_\ell}(t_\ell) \right|.
\end{equation} 
By~\eqref{equ:interval-stationarity}, we have $G_{\mathcal M;\,j_\ell}=G_{\mathcal M;\,0}$ for every $j_\ell$. 
Since $G_{\mathcal M;\,0}$ is continuous on $[0,\infty)$, the above inequality implies the continuity of $G_{\mathcal M;\,j_1,\ldots,j_m}$ on $[0,\infty)^m$. If moreover, $(t_\ell),(t_\ell')\in[a,\infty)^m$ for some $a>0$, then Proposition~\ref{prop:one-dimensional-Lipschitz} bounds the right-hand side by 
\[
2e
\left(2+\frac{1}{a}\right) \sum_{\ell=1}^m |t_\ell'-t_\ell|.
\]
\end{proof}

\subsection{Absolute continuity of higher-dimensional distribution functions}\label{subsec:gap-ab-c}

We now prove Theorem~\ref{thm:joint-gap-distributions}(3) in the general setting of $\mu$ and $\mathcal M$.

\begin{proposition}\label{prop:absolutely-continuous}
    For any distinct $j_1,\ldots,j_m\in\mathbb Z$, if $m\le 2$, then  $G_{\mathcal M;\, j_1,\ldots,j_m}$ is absolutely continuous. Moreover, if $m\ge \dim_{\mathbb R}\mathcal M$, then $G_{\mathcal M;\, j_1,\ldots,j_m}$  is not absolutely continuous.
\end{proposition}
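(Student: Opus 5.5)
The plan is to treat $m=1$ via the Lipschitz estimate, and the remaining cases via the Campbell expression~\eqref{equ:rephrase-slope-gap-distribution}. That expression realizes the law of $(\Delta_{j_1},\ldots,\Delta_{j_m})$ under $P^0_{\mathcal M}$ as a countable sum of pushforwards of $\mu$ under local ``slope difference maps''. Concretely, near any $\omega_0\in\mathcal M$ I fix a small period-coordinate neighbourhood $U$ on which $\mu$ is equivalent to Lebesgue measure on the linear subspace cutting out $\mathcal M$. There I enumerate the countably many tuples of relative homology classes $(\alpha_0,\ldots,\alpha_k)$ whose holonomies can realize the consecutive reduced slopes $s_{i+j_\ell}(\omega),s_{i+j_\ell+1}(\omega)$ with $s_i(\omega)\in I$. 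For each tuple the gaps are real-analytic functions
\[
F_{\alpha}(\omega)=\Bigl(s\bigl(\omega(\alpha_{b_\ell})\bigr)-s\bigl(\omega(\alpha_{a_\ell})\bigr)\Bigr)_{1\le\ell\le m}
\]
of $\omega\in U$. Hence for every Borel $Z\subset[0,\infty)^m$, the quantity $P^0_{\mathcal M}\bigl((\Delta_{j_\ell})_\ell\in Z\bigr)$ is at most $\lambda^{-1}\sum_{\alpha}\mu(U_\alpha\cap F_\alpha^{-1}Z)$, summed over countably many charts $U$. Moreover, it vanishes as soon as every $F_\alpha(U_\alpha)\cap Z$ is $\mu$-negligible.

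\textbf{Case $m=1$.} By Proposition~\ref{prop:one-dimensional-Lipschitz}, $G_{\mathcal M;0}$ is continuous on $[0,\infty)$, so the gap law has no atom at $0$, and it is Lipschitz on each $[a,\infty)$. A Lebesgue null set $Z\subset(0,\infty)$ is covered, inside each $[a,\infty)$, by intervals of arbitrarily small total length, so its mass is $0$. Stationarity~\eqref{equ:interval-stationarity} gives the same for every $j_1$.

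\textbf{Case $m=2$.} Here I avoid any analysis of the ambient deformation space and use only the $\mathrm{SL}(2,\mathbb R)$-directions. Since the action is locally free $\mu$-a.e., near a.e.\ point $\mu$ locally disintegrates as Haar measure on a small neighbourhood $W\subset\mathrm{SL}(2,\mathbb R)$ times a transversal measure. Along an orbit $g\mapsto g\omega$, the holonomies transform linearly, so the two gaps become
\[
\Psi(g)=\bigl(s(gw)-s(gu),\,s(gu')-s(gv)\bigr)
\]
for fixed plane vectors with at least three distinct directions, because $j_1\neq j_2$. It then suffices to show that $\Psi$ has rank $2$ off a real-analytic proper subvariety of $W$. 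Indeed, then $\Psi_\ast(\mathrm{Haar}|_W)\ll\mathrm{Leb}_2$ by the coarea formula or submersion theorem, and Fubini over the transversal gives absolute continuity.

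To verify the rank, note that the diagonal $g_r$ scales both coordinates by $e^{-r}$ (the radial direction), and $h_s$ kills both coordinates. The rotation direction $r_\theta$ changes slopes through $s\mapsto\tan(\arctan s+\theta)$. Its derivative moves the two differences by $(1+s_b^2)-(1+s_a^2)$-type quantities, which are not proportional to the radial vector unless a polynomial identity in the distinct slopes holds. Checking that this identity cannot hold identically for three or four distinct slopes is the step I expect to be the main obstacle. I would first try writing $\Psi$ explicitly in Iwasawa coordinates and computing one $2\times2$ minor.

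\textbf{Non-absolute continuity for $m\ge\dim_{\mathbb R}\mathcal M$.} Each $F_\alpha$ is invariant under the horocycle flow, since $h_s$ shifts all slopes by $-s$. Hence $F_\alpha$ factors locally through the quotient of $U$ by $N$-orbits, which is a manifold of dimension $\dim_{\mathbb R}\mathcal M-1<m$. The smooth image of a manifold of dimension less than $m$ in $\mathbb R^m$ is Lebesgue null, so $Z:=\bigcup_\alpha F_\alpha(U_\alpha)$, taken over countably many charts, is a Lebesgue null set. By the Campbell bound, the gap vector lies in $Z$ with $P^0_{\mathcal M}$-probability one. This contradicts the existence of an $L^1$ density.
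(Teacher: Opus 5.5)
Your outline follows the paper's proof. The refined Campbell formula reduces the law of $(\Delta_{j_1},\ldots,\Delta_{j_m})$ under $P^0_{\mathcal M}$ to countably many local slope-difference maps indexed by tuples of relative homology classes. The case $m=1$ comes from the local Lipschitz bound. For $m\ge\dim_{\mathbb R}\mathcal M$, $h_s$ annihilates slope differences, so the rank is at most $\dim_{\mathbb R}\mathcal M-1<m$ and Sard applies; your quotient-by-$N$-orbits phrasing is the same argument. For $m=2$ you use the paper's mechanism too, a rank-two minor along two $\mathrm{SL}(2,\mathbb R)$-directions. However, you leave that minor uncomputed, and it is the whole content of this case.

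The minor is settled in one line once you use the ordering the indices force, which your proposal never invokes; you only note that at least three distinct slopes occur. Take $j_1<j_2$ without loss of generality, and let $a<b$ be the slopes realizing $\Delta_{j_1}$ and $c<d$ those realizing $\Delta_{j_2}$. Since $j_1+1\le j_2$, you get $a<b\le c<d$.

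\begin{itemize}
\item The $g_r$-derivatives of $(b-a,\,d-c)$ are $(-(b-a),\,-(d-c))$.
\item The $r_\theta$-derivatives are $((b-a)(a+b),\,(d-c)(c+d))$.
\end{itemize}

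So the minor equals $-(b-a)(d-c)(c+d-a-b)$, which is strictly negative because $c+d>a+b$. Hence $\Psi$ is a submersion at every point where the tuple realizes the consecutive slopes, and your proper-subvariety fallback is unnecessary.

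The paper uses the upper unipotent $u_r$ instead of $r_\theta$. The two generators differ by the horocycle generator, which kills both differences, so the minors agree up to sign.

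Your local Haar-times-transversal disintegration along $\mathrm{SL}(2,\mathbb R)$-orbits is a harmless variant of the paper's appeal to $\mu$ lying in the Lebesgue class on $\mathcal M$. It has one small advantage: the $m=2$ case then uses only $\mathrm{SL}(2,\mathbb R)$-invariance and local freeness of the action.
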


As defined in the introduction, absolute continuity of $G_{\mathcal M;j_1,\ldots,j_m}$ means that it admits an $L^1$ density. Since 
\[
G_{\mathcal M;j_1,\ldots,j_m}(t_1,\ldots,t_m) = P^0_{{\mathcal M}}\left(\Delta_{j_1}\xi\le t_1,\ldots,\Delta_{j_m}\xi\le t_m\right),
\]
this is equivalent to 
\[
P^0_{{\mathcal M}}
\circ
\left(\Delta_{j_1},\ldots,\Delta_{j_m}\right)^{-1}
\ll
\operatorname{Leb}_m.
\]
We will prove the latter condition directly.

If $m=1$, local Lipschitz continuity of a distribution function implies the absolute continuity. In higher dimensions, this implication fails. We therefore treat the joint distributions separately.

We first study a local map on $\mathcal{M}$ defined by slope differences. Let $\mathcal{H}(\kappa)$ be the ambient stratum of $\mathcal{M}$. Let
\[
    \Phi:U\longrightarrow V\subset H^1(S,\Sigma;\mathbb C)
\]
be a period coordinate chart on $\mathcal{H}(\kappa)$, and identify $U$ with $V$ via $\Phi$. By the definition of an affine invariant submanifold, after shrinking $U$ if necessary, $U\cap\mathbb R_{>0}\mathcal M$ is a finite union of linear subspaces of $H^1(S,\Sigma;\mathbb C)$ intersected with $V$.

For $\omega\in U$ and $\alpha\in H_1(S,\Sigma;\mathbb Z)$, write $\int_\alpha\omega = \left(x_\alpha(\omega),y_\alpha(\omega)\right)$. Whenever $x_\alpha(\omega)\ne0$, write
\[
    s_\alpha(\omega)
    :=
    s\left(\int_\alpha\omega\right)
    =
    \frac{y_\alpha(\omega)}{x_\alpha(\omega)}.
\]
For $\alpha,\beta\in H_1(S,\Sigma;\mathbb Z)$ such that $x_\alpha(\omega),x_\beta(\omega)\ne0$, define
\[
    d_{\alpha,\beta}(\omega)
    := s_\beta(\omega) - s_\alpha(\omega).
\]
For every $\alpha_1,\beta_1,\ldots,\alpha_m,\beta_m\in H_1(S,\Sigma;\mathbb Z)$, define the open subset
\[
    V_{\alpha_1,\beta_1,\ldots,\alpha_m,\beta_m}
    :=
    \left\{
    \omega\in U:
    x_{\alpha_\ell}(\omega)x_{\beta_\ell}(\omega)\ne0
    \text{ for all }1\le\ell\le m
    \right\},
\]
and define
\[
    \boldsymbol d_{\alpha_1,\beta_1,\ldots,\alpha_m,\beta_m}
    :=
    \left(
    d_{\alpha_1,\beta_1},
    \ldots,
    d_{\alpha_m,\beta_m}
    \right)
    :
    V_{\alpha_1,\beta_1,\ldots,\alpha_m,\beta_m}\cap\mathcal{M}
    \longrightarrow\mathbb R^m.
\]

For $m=2$, let $A_{\alpha_1,\beta_1,\alpha_2,\beta_2}\subset V_{\alpha_1,\beta_1,\alpha_2,\beta_2}$ be the set of $\omega$ satisfying
\[
    s_{\alpha_1}(\omega)
    <
    s_{\beta_1}(\omega)
    \le
    s_{\alpha_2}(\omega)
    <
    s_{\beta_2}(\omega).
\]

\begin{lemma}\label{lem:two-slope-differences-submersion}
    Fix any period coordinate chart $(U,\Phi)$ and $\alpha_1,\beta_1,\ldots,\alpha_m,\beta_m\in H_1(S,\Sigma;\mathbb Z)$.
    \begin{enumerate}
        \item For every Borel set $E\subset\mathbb R^2$ with $\operatorname{Leb}_2(E)=0$, the set
        \[
        A_{\alpha_1,\beta_1,\alpha_2,\beta_2}
        \bigcap
        \;(d_{\alpha_1,\beta_1,\alpha_2,\beta_2})^{-1}(E)
        \]
        has $\mu$-measure zero.
        \item If $m\ge \dim_\mathbb R\mathcal{M}$, then the image of $\boldsymbol d_{\alpha_1,\beta_1,\ldots,\alpha_m,\beta_m}$ has Lebesgue measure zero in $\mathbb R^m$.
    \end{enumerate}
\end{lemma}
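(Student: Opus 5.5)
Both parts rest on the same observation: in a period chart, $\mathbb R_{>0}\mathcal M\cap U$ is a finite union of linear subspaces $L$ of $H^1(S,\Sigma;\mathbb C)$ defined over $\mathbb R$. Each such $L$ has the form $L=W\otimes\mathbb C=\{X+iY: X,Y\in W\}$ for a real subspace $W\subset H^1(S,\Sigma;\mathbb R)$. The measure $\mu$ restricted to $U$ is, up to a smooth density, the disintegration of Lebesgue measure on the cone $L$ along the area-one hypersurface. Since everything is invariant under positive rescaling (slopes are scale invariant), it suffices to work on the cone $L$ with Lebesgue measure. The slope maps $s_\alpha=y_\alpha/x_\alpha$ are homogeneous of degree $0$, so they descend to the area-one locus. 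In coordinates $(X,Y)\in W\times W$, the map $x_\alpha=\alpha(X)$ is a linear functional of $X$ only, and $y_\alpha=\alpha(Y)$ is the same functional applied to $Y$.

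\textbf{Part (1).} Fix $X$ with $x_{\alpha_\ell},x_{\beta_\ell}\neq 0$ and view $\boldsymbol d=(d_{\alpha_1,\beta_1},d_{\alpha_2,\beta_2})$ as a function of $Y\in W$. For fixed $X$ this map is linear in $Y$:
\[
d_{\alpha,\beta}(Y)=\frac{\beta(Y)}{\beta(X)}-\frac{\alpha(Y)}{\alpha(X)}.
\]
Fubini on $W\times W$ then reduces the claim to showing that, for almost every $X$ in the relevant region, the linear map $Y\mapsto\boldsymbol d(Y)\in\mathbb R^2$ is surjective. Once that holds, preimages of null sets are null in $Y$. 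This uses that pushforward of Lebesgue measure under a surjective linear map is absolutely continuous, and that restricting to area one and rescaling preserves null sets.

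Surjectivity amounts to linear independence of the two functionals $\ell_j=\beta_j/\beta_j(X)-\alpha_j/\alpha_j(X)$ restricted to $W$. On $A_{\alpha_1,\beta_1,\alpha_2,\beta_2}$ the vectors of $\alpha_1,\beta_1$ have distinct slopes, so $\ell_1\neq 0$ on $W$ (evaluate at $Y$ equal to the given point). The same holds for $\ell_2$. Suppose they were dependent, say $\ell_2=c\,\ell_1$ on $W$. Evaluating at the real part ($Y=X$) gives nothing, since both vanish there. Instead I use the ordering $s_{\alpha_1}<s_{\beta_1}\le s_{\alpha_2}<s_{\beta_2}$. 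If $\ell_2=c\ell_1$ held identically, then all four slopes would move rigidly together under the $N$-action. I expect to rule this out as follows: the dependence condition is a real-algebraic condition in $X$. Either it fails on a full-measure set of $X$, or it holds on an open set. In the latter case the slopes are locked in a linear relation. I will then argue, using only the strict inequalities and that the relation must hold for all $Y\in W$, that this relation is incompatible with the ordering, for instance by testing $Y$ along directions where some $\alpha_j$ vanishes.

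This is the main obstacle. Dependence might genuinely occur, for example when $\alpha_2,\beta_2$ are parallel to $\alpha_1,\beta_1$ on the whole of $\mathcal M$. So I may need to pass to the complementary situation and show that such degenerate configurations contribute only a $\mu$-null set to $A$ in the use later on. For this I would use that the sets in the definition of $A$ involve strict slope orderings of reduced vectors, so the degenerate locus forces equalities.

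\textbf{Part (2).} This is a dimension count. The map $\boldsymbol d$ is the restriction to $V\cap\mathcal M$ of a smooth (rational) function. It is constant along the rays $\mathbb R_{>0}\omega$. On each linear piece of $\mathbb R_{>0}\mathcal M$ it is also invariant under the diagonal flow up to scaling: $g_r$ multiplies all slopes by $e^{-r}$, so $\boldsymbol d(g_r\omega)=e^{-r}\boldsymbol d(\omega)$. The source is a manifold of dimension $\dim_\mathbb R\mathcal M$, since $\boldsymbol d$ factors through the area-one locus. The map is smooth, and invariance under the positive real scaling of $\omega$ is already used up. My plan is therefore to show that the rank of $d\boldsymbol d$ is at most $\dim_{\mathbb R}\mathcal M-1$. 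This holds because $\boldsymbol d$ factors through the quotient of $\mathcal M$ by the rotation-free direction: the horocycle $h_s$ shifts all slopes by the same $s$, hence fixes all differences, so $\boldsymbol d\circ h_s=\boldsymbol d$. Then $\boldsymbol d$ factors through a space of dimension at most $\dim_\mathbb R\mathcal M-1<m$, and Sard's theorem (or the fact that smooth images of lower-dimensional manifolds in $\mathbb R^m$ are Lebesgue null) gives that the image has measure zero. Since the piecewise-linear structure has finitely many pieces in $U$, taking a finite union finishes the argument.
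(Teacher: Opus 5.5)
\textbf{Part (2)} is correct and is the paper's argument: $\boldsymbol d\circ h_s=\boldsymbol d$ puts the horocycle direction in the kernel of the differential, so the rank is at most $\dim_{\mathbb R}\mathcal M-1<m$, and Sard's theorem gives a null image.

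\textbf{Part (1)} has a genuine gap, exactly at the step you flag. Your reduction needs the linear map $Y\mapsto(\ell_1(Y),\ell_2(Y))$ on $W$ to be surjective for almost every real part $X$. This can fail for every $X$. Take $\mathcal M$ a Teichm\"uller curve. Then $W=\operatorname{span}(X,Y)$ at every point. Since $\ell_j(X)=0$ and $\ell_j(Y)=d_{\alpha_j,\beta_j}(\omega)$, you get $\ell_2=\frac{d_{\alpha_2,\beta_2}(\omega)}{d_{\alpha_1,\beta_1}(\omega)}\,\ell_1$ on $W$ for every $\omega\in A_{\alpha_1,\beta_1,\alpha_2,\beta_2}$. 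So the degenerate locus is all of $A_{\alpha_1,\beta_1,\alpha_2,\beta_2}$, and it is fully compatible with the strict ordering. Neither proposed way out can work: dependence does not contradict the ordering, and the degenerate configurations are not $\mu$-null. At fixed $X$ the only variations here are scaling $Y$ and the shear $Y\mapsto Y-sX$ (the $h_s$ direction). These give rank one, and the ordering hypothesis in the definition of $A_{\alpha_1,\beta_1,\alpha_2,\beta_2}$ never enters your argument.

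The missing idea is to vary the real parts as well. The paper uses the two $\mathrm{SL}(2,\mathbb R)$-directions $g_t$ and $u_r=\begin{pmatrix}1&r\\0&1\end{pmatrix}$, both tangent to $\mathcal M$. The second moves $X\mapsto X+rY$ and sends a slope $s$ to $s/(1+rs)$. Write $a<b\le c<d$ for the four slopes. The derivatives of $(d_{\alpha_1,\beta_1},d_{\alpha_2,\beta_2})$ along these two directions form the matrix
\[
-\begin{pmatrix} b-a & (b-a)(a+b)\\ d-c & (d-c)(c+d)\end{pmatrix},
\]
with determinant $(b-a)(d-c)(c+d-a-b)$. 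This is positive precisely because of the ordering. Hence $\boldsymbol d_{\alpha_1,\beta_1,\alpha_2,\beta_2}$ is a submersion on the smooth part of $A_{\alpha_1,\beta_1,\alpha_2,\beta_2}$. Since $\mu$ is in the Lebesgue class, preimages of $\operatorname{Leb}_2$-null sets are $\mu$-null.
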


\begin{proof}
For~(1), since the singular locus of $\mathcal{M}$ has $\mu$-measure zero, it suffices to prove that intersection of the smooth locus with $A_{\alpha_1,\beta_1,\alpha_2,\beta_2} \bigcap\;(d_{\alpha_1,\beta_1,\alpha_2,\beta_2})^{-1}(E)$ has $\mu$-measure zero. Let $\omega$ be a point in this intersection.

We first show that $d_{\alpha_1,\beta_1,\alpha_2,\beta_2}$ is a submersion at $\omega$. Write
\[
    a:=s_{\alpha_1}(\omega),
    \qquad
    b:=s_{\beta_1}(\omega),
    \qquad
    c:=s_{\alpha_2}(\omega),
    \qquad
    d:=s_{\beta_2}(\omega).
\]
Since $\omega\in A_{\alpha_1,\beta_1,\alpha_2,\beta_2}$, we have $a<b\le c<d$. Consider 
$g_t
=
\begin{pmatrix}
e^{t/2} & 0\\
0 & e^{-t/2}
\end{pmatrix}$
and 
$u_r
=
\begin{pmatrix}
1 & r\\
0 & 1
\end{pmatrix}$.
By a direct computation,
\[ 
    \left. \frac{d}{dt} \right|_{t=0} d_{\alpha_1,\beta_1}(g_t\omega) 
    = 
    \left. \frac{d}{dt} \right|_{t=0} \left(e^{-t}b-e^{-t}a\right) 
    = -(b-a) = -d_{\alpha_1,\beta_1}(\omega),
\]
and
\[
    \begin{aligned}
    \left. \frac{d}{dr} \right|_{r=0} d_{\alpha_1,\beta_1}(u_r\omega) 
    &= \left. \frac{d}{dr} \right|_{r=0} \left( \frac{b}{1+rb} - \frac{a}{1+ra} \right)\\ 
    &= -b^2+a^2 = -d_{\alpha_1,\beta_1}(\omega)(a+b),
    \end{aligned}
\]
Similarly,
\[
    \left.
    \frac{d}{dt}
    \right|_{t=0}
    d_{\alpha_2,\beta_2}(g_t\omega)
    =
    -d_{\alpha_2,\beta_2}(\omega)\quad\text{and}\quad
    \left.
    \frac{d}{dr}
    \right|_{r=0}
    d_{\alpha_2,\beta_2}(u_r\omega)
    =
    -d_{\alpha_2,\beta_2}(\omega)(c+d).
\]

Restricted to the tangent subspace spanned by $\left.\frac{d}{dt}\right|_{t=0}g_t\omega$ and $\left.\frac{d}{dt}\right|_{r=0}u_r\omega$, the tangent map $D\boldsymbol d_{\alpha_1,\beta_1,\alpha_2,\beta_2}(\omega)$ is represented by
\[
    -
    \begin{pmatrix}
    d_{\alpha_1,\beta_1}(\omega)
    &
    d_{\alpha_1,\beta_1}(\omega)(a+b)
    \\
    d_{\alpha_2,\beta_2}(\omega)
    &
    d_{\alpha_2,\beta_2}(\omega)(c+d)
    \end{pmatrix}.
\]
Its determinant is $d_{\alpha_1,\beta_1}(\omega)d_{\alpha_2,\beta_2}(\omega)(c+d-a-b)$. Since $a<b\le c<d$, the determinant is positive and hence $\boldsymbol d_{\alpha_1,\beta_1,\alpha_2,\beta_2}$ is a submersion at $\omega$.

Note that $\mu$ belongs to the Lebesgue measure class on $\mathcal M$ and the preimage under a submersion of a set of Lebesgue measure zero has Lebesgue measure zero. The above shows that the preimage $(d_{\alpha_1,\beta_1,\alpha_2,\beta_2})^{-1}(E)$ within  $A_{\alpha_1,\beta_1,\alpha_2,\beta_2}$ has $\mu$-measure zero. Then the first assertion follows.

For~(2), 
let
$h_s=\begin{pmatrix}
1&0\\ 
-s&1
\end{pmatrix}$.
For every $\omega\in V_{\alpha_1,\beta_1,\ldots,\alpha_m,\beta_m}\cap\mathcal M$, $d_{\alpha_\ell,\beta_\ell}(h_s\omega)=d_{\alpha_\ell,\beta_\ell}(\omega)$ for each $\ell$. If $\omega$ is also in the smooth locus of $\mathcal{M}$, then the tangent map $D\boldsymbol d_{\alpha_1,\beta_1,\ldots,\alpha_m,\beta_m}(\omega)$ is defined and the tangent vector $\left.\frac{d}{ds}\right|_{s=0}h_s\omega$ belongs to its kernel. Consequently,
\[
\operatorname{rank}
    D\boldsymbol d_{\alpha_1,\beta_1,\ldots,\alpha_m,\beta_m}(\omega)
    \le
    \dim_{\mathbb R}\mathcal M-1
    <m.
\]
Hence, by Sard's theorem, the image of the smooth locus under $\boldsymbol d_{\alpha_1,\beta_1,\ldots,\alpha_m,\beta_m}$ has Lebesgue measure zero in $\mathbb R^m$. The singular locus is locally contained in a finite union of lower-dimensional linear subspaces in period coordinates, and hence the images also have Lebesgue measure zero by Sard's theorem. Therefore, the entire image of $\boldsymbol d_{\alpha_1,\beta_1,\ldots,\alpha_m,\beta_m}$ has Lebesgue measure zero in $\mathbb R^m$.
\end{proof}

We are ready to prove Proposition~\ref{prop:absolutely-continuous}.

\begin{proof}
Fix an interval $I\subset\mathbb R$ with $\operatorname{Leb}(I)=1$. By the refined Campbell theorem,
\begin{equation}\label{equ:Campbell-higher-dim-distribution}
\begin{aligned}
    &\lambda
    P^0_{{\mathcal M}}
    \big(
    \left(\Delta_{j_1}\zeta,\ldots,\Delta_{j_m}\zeta\right)\in B
    \big)\\
    &\qquad=
    \int_{\mathcal M}
    \sum_{s\in\mathbb S_\omega\cap I}
    \chi_B
    \Big(
    \Delta_{j_1}\big(\tau_s\xi_{\mathcal M}(\omega)\big),
    \ldots,
    \Delta_{j_m}\big(\tau_s\xi_{\mathcal M}(\omega)\big)
    \Big)
    \,d\mu(\omega)
\end{aligned}
\end{equation}
for every $B\in\mathcal B(\mathbb R^m)$. 

Fix a period coordinate chart $(U,\Phi)$. For every $\omega\in U\cap\mathcal M$ and $s\in\mathbb S_\omega\cap I$, there are relative homology classes $\alpha_\ell,\beta_\ell$, $1\le\ell\le m$, represented by saddle connections contributing to $\Delta_{j_\ell}(\tau_s\xi_{\mathcal M}(\omega))$, such that
\[
    \Delta_{j_\ell}(\tau_s\xi_{\mathcal M}(\omega))
    =
    d_{\alpha_\ell,\beta_\ell}(\omega).
\]
Then
\begin{equation}\label{equ:equal-slope-difference}
    \Big(
    \Delta_{j_1}\big(\tau_s\xi_{\mathcal M}(\omega)\big),\ldots,
    \Delta_{j_m}\big(\tau_s\xi_{\mathcal M}(\omega)\big)
    \Big)
    =
    \boldsymbol d_{\alpha_1,\beta_1,\ldots,\alpha_m,\beta_m}(\omega).
\end{equation}

We first prove the conclusion for $m\le2$. For $m=1$, local Lipschitz continuity already implies absolute continuity, so it remains to consider $m=2$. Let $E\subset(0,\infty)^2$ be any Borel set with $\operatorname{Leb}_2(E)=0$. We aim to show that
\[
    P^0_{{\mathcal M}}
    \big(
    \left(\Delta_{j_1}\zeta,\Delta_{j_2}\zeta\right)\in E
    \big)
    =0.
\]

We may assume that $j_1<j_2$. For $\omega\in U\cap\mathcal{M}$, fix the classes $\alpha_1,\beta_1,\alpha_2,\beta_2$ appearing above. Since $j_1<j_2$,  we have
\[
s_{\alpha_1}(\omega) < s_{\beta_1}(\omega)
\le s_{\alpha_2}(\omega) < s_{\beta_2}(\omega)
\]
and hence $\omega\in A_{\alpha_1,\beta_1,\alpha_2,\beta_2}\cap\mathcal{M}$.  Therefore, whenever $\chi_E
\Big(
\Delta_{j_1}\big(\tau_s\xi_{\mathcal M}(\omega)\big),
\Delta_{j_2}\big(\tau_s\xi_{\mathcal M}(\omega)\big)
\Big)$ 
is positive, $\omega$ is contained in
\[
    \bigcup_{\alpha_1,\beta_1,\alpha_2,\beta_2\in H_1(S,\Sigma;\mathbb Z)}
    \left(A_{\alpha_1,\beta_1,\alpha_2,\beta_2}\bigcap\;
    (\boldsymbol d_{\alpha_1,\beta_1,\alpha_2,\beta_2})^{-1}(E)\right).
\]
By Lemma~\ref{lem:two-slope-differences-submersion}~(1), each set in the union has $\mu$-measure zero. Since $H_1(S,\Sigma;\mathbb Z)$ is countable, the union also has $\mu$-measure zero. Thus, $\chi_E
    \Big(
    \Delta_{j_1}\big(\tau_s\xi_{\mathcal M}(\omega)\big),
    \Delta_{j_2}\big(\tau_s\xi_{\mathcal M}(\omega)\big)
    \Big)$
vanishes for $\mu$-almost every $\omega\in U\cap\mathcal M$. 

Taking $B:= E$ in~\eqref{equ:Campbell-higher-dim-distribution} and taking a countable cover of $\mathcal{M}$ by period coordinate domains, the right-hand side is zero in each domain. This proves $P^0_{{\mathcal M}}\big( \left(\Delta_{j_1}\zeta,\Delta_{j_2}\zeta\right)\in E \big) = 0$ and hence the absolute continuity for $m=2$.

We next prove the conclusion for $m\ge \dim_\mathbb R\mathcal{M}$. We aim to construct a subset $B_0\subset\mathbb R^m$ with $\mathrm{Leb}_m(B_0)=0$ such that
\[
P^0_{\mathcal M}\left(
\left(\Delta_{j_1}\zeta,\ldots,\Delta_{j_m}\zeta\right)\in B_0
\right)>0.
\]

For each period coordinate chart $(U,\Phi)$, take
\[
    B_U:=
    \bigcup_{\alpha_1,\beta_1,\ldots,\alpha_m,\beta_m\in H_1(S,\Sigma;\mathbb Z)}
    \boldsymbol d_{\alpha_1,\beta_1,\ldots,\alpha_m,\beta_m}
    \left(
    V_{\alpha_1,\beta_1,\ldots,\alpha_m,\beta_m}\cap\mathcal{M}
    \right)\subset \mathbb R^m.
\]
By Lemma~\ref{lem:two-slope-differences-submersion}~(2), each subset in the union has Lebesgue measure zero, and hence $B_U$ has Lebesgue measure zero.

Take a countable cover $(U_n)_{n\in\mathbb N}$ of $\mathcal{M}$ by period coordinate domains, and choose a Borel set $B_0\subset \mathbb R^m$ of Lebesgue measure zero containing $\bigcup_{n\in\mathbb N} B_{U_n}$.
Then by~\eqref{equ:equal-slope-difference}, 
\[
    \left(
    \Delta_{j_1}(\tau_s\xi_{\mathcal M}(\omega)),\ldots,
    \Delta_{j_m}(\tau_s\xi_{\mathcal M}(\omega))
    \right)\in B_0
\]
for every $\omega\in\mathcal M$ and $s\in\mathbb S_\omega\cap I$. By~\eqref{equ:Campbell-higher-dim-distribution},
\[
    P^0_{\mathcal M}
    \big(
        \left(\Delta_{j_1}\zeta,\ldots,\Delta_{j_m}\zeta\right)\in B_0
    \big)
    =1.
\]
Since $\mathrm{Leb}_m(B_0) = 0$,  $G_{\mathcal M;\, j_1,\ldots,j_m}$ is not absolutely continuous.
\end{proof}

\subsection{Proof of Theorem~\ref{thm:joint-gap-distributions} and Theorem~\ref{thm:general-joint-gap-distributions-in-intro}}

We now prove full version of Theorem~\ref{thm:general-joint-gap-distributions-in-intro}.

\begin{theorem}\label{thm:general-joint-gap-distributions}
Assume that $\mu$ satisfies~\ref{ass:circle-limit}. Then for every $m\in\mathbb N$ and any distinct $j_1,\ldots,j_m\in\mathbb Z$, the following statements hold.
\begin{enumerate}
    \item
    For $\mu$-almost every $\omega\in\mathcal M$ and every
    $t_1,\ldots,t_m\ge0$,
    \[
    \begin{aligned}
    &\lim_{R\to\infty}
    \frac{1}{D_R(\omega)}
    \#\left\{
    1\le i\le D_R(\omega):
    R^2\Delta^\omega_{i+j_\ell}\le t_\ell
    \text{ for }1\le\ell\le m
    \right\}
    \\
    &\hspace{4cm}
    =
    G_{\mathcal M;\,j_1,\ldots,j_m}
    (t_1,\ldots,t_m).
    \end{aligned}
    \]

    \item The function $G_{\mathcal M;\,j_1,\ldots,j_m}$ is continuous on $[0,\infty)^m$ and Lipschitz continuous on $[a,\infty)^m$ for every $a>0$.

    \item
    The function $G_{\mathcal M;\,j_1,\ldots,j_m}$ is absolutely continuous for $m\le2$, but is not absolutely continuous for $m\ge\dim_{\mathbb R}\mathcal M$.
\end{enumerate}
\end{theorem}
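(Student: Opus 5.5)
The theorem is an assembly of results already proved, so I will combine them in order.

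For (2), I will invoke Corollary~\ref{cor:joint-gap-local-Lipschitz} directly. It gives continuity of $G_{\mathcal M;\,j_1,\ldots,j_m}$ on $[0,\infty)^m$. It also gives the explicit Lipschitz bound with constant $2e(2+1/a)$ on $[a,\infty)^m$ for each $a>0$. This corollary comes from Proposition~\ref{prop:one-dimensional-Lipschitz} through the copula-type inequality~\eqref{equ:distribution-differences} and shift stationarity~\eqref{equ:interval-stationarity}.

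For (1), I will upgrade Proposition~\ref{prop:convergence-joint-gap-distributions}, which gives convergence only at continuity points of $G_{\mathcal M;\,j_1,\ldots,j_m}$. By (2), every point of $[0,\infty)^m$ is a continuity point, so convergence holds for every $(t_1,\ldots,t_m)$. The one subtlety is the order of quantifiers. Proposition~\ref{prop:convergence-joint-gap-distributions} supplies a full-measure set of $\omega$ that does not depend on $m$, the indices, or the $t_\ell$. That set is the one on which both conclusions hold:
\begin{itemize}
\item the point process convergence of Theorem~\ref{thm:fixed-stratum-point-process}, which uses Assumption~\ref{ass:circle-limit};
\item the intensity convergence of Lemma~\ref{lem:convergence-of-intensities}.
\end{itemize}
On that set, Lemma~\ref{lem:continuous-mapping-theorem} yields weak convergence of the gap-sequence distributions~\eqref{equ:large-R-gap-convergence}. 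The Portmanteau theorem, applied to the orthant $\{u_{j_\ell}\le t_\ell\}$, then gives the limit. This requires that the orthant's boundary be null for $\Delta_\ast P^0_{\mathcal M}$. That holds because each one-dimensional marginal equals $G_{\mathcal M;\,0}$ and is continuous, so no hyperplane $\{u_{j_\ell}=t_\ell\}$ carries mass. I will state explicitly that the exceptional null set is uniform in all parameters. Finally, Lemma~\ref{lem:expression-related-to-counting} identifies the left-hand side with the normalized count, using that $D_R(\omega)>0$ for large $R$.

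For (3), I will cite Proposition~\ref{prop:absolutely-continuous}. It proves absolute continuity for $m\le 2$ via the submersion argument of Lemma~\ref{lem:two-slope-differences-submersion}(1). It proves failure of absolute continuity for $m\ge\dim_{\mathbb R}\mathcal M$ via the Sard-type argument of Lemma~\ref{lem:two-slope-differences-submersion}(2).

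The deduction of Theorem~\ref{thm:joint-gap-distributions} then follows by taking $\mu=\mu_{\mathcal C}$, for which Assumption~\ref{ass:circle-limit} holds by Eskin--Masur. Note that $\Lambda^{\mathrm{red}}_\omega=\Lambda_\omega$ almost surely there by Lemma~\ref{lem:SV-quadratic-short-holonomy-vectors}, so $\xi_{\mathcal M}=\xi_{\mathcal C}$. The remaining item is strict positivity on $(0,\infty)^m$, which I expect to be the only real obstacle. For it, I will rely on a separate positivity lemma adapted from Athreya--Chaika (Lemma~\ref{lem:positivity-gap-distributions}). Its approach is to exhibit, by a local period-coordinate construction, a positive-measure set of surfaces whose consecutive slopes near $0$ have prescribed small gaps, and then feed that set into~\eqref{equ:rephrase-slope-gap-distribution}.
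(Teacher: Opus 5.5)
Your proposal is correct and follows the paper's proof. Part (2) is Corollary~\ref{cor:joint-gap-local-Lipschitz}. Part (1) is Proposition~\ref{prop:convergence-joint-gap-distributions}, extended to every point by the continuity in (2). Part (3) is Proposition~\ref{prop:absolutely-continuous} together with the one-dimensional Lipschitz estimate. Your two extra remarks are accurate refinements: the exceptional null set is uniform in $m$, the indices and the $t_\ell$, and the orthant boundary is $\Delta_\ast P^0_{\mathcal M}$-null. The aside on strict positivity is outside this statement. There the paper does not use a period-coordinate construction; it uses $\widetilde p_{2k+2}(\sigma)>0$ from Athreya--Chaika.
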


\begin{proof}
Assertion~(2) follows from Corollary~\ref{cor:joint-gap-local-Lipschitz}. Assertion~(1) follows from Proposition~\ref{prop:convergence-joint-gap-distributions} and the continuity in assertion~(2). Assertion~(3) follows from Proposition~\ref{prop:one-dimensional-Lipschitz} and Proposition~\ref{prop:absolutely-continuous}.
\end{proof}

We next return to Theorem~\ref{thm:joint-gap-distributions}. We specialize to the Masur--Veech measure $\mu_\mathcal C$ on a connected component $\mathcal C$ of a stratum $\mathcal H^1(\kappa)$. It remains to prove the strict positivity assertion in Theorem~\ref{thm:joint-gap-distributions}~(2). The following proof generalizes the positivity argument in~\cite[Section~3.2]{AthreyaChaika2012} for one-dimensional gap distributions. Let
\[
    \widetilde p_k(\sigma) 
    := 
    \mu_\mathcal{C}\left\{\omega:\#(\Lambda_\omega\cap T_{(-\sigma,\sigma)}) \ge k\right\},\qquad k\in\mathbb N,\ \sigma>0,
\]
where $T_{(-\sigma,\sigma)}$ is a triangle as defined in~\eqref{equ:triangle-in-plane}. By~\cite[Lemma~5.4]{AthreyaChaika2012}, $\widetilde p_k(\sigma)>0$ for every $k\in\mathbb N$ and $\sigma>0$.

\begin{lemma}\label{lem:positivity-gap-distributions}
    Fix a connected component $\mathcal{C}$ of $\mathcal{H}^1(\kappa)$. For every $m\in\mathbb N$ and $j_1,\ldots,j_m\in\mathbb Z$, the distribution function $G_{\mathcal{C};j_1,\ldots,j_m}$ is strictly positive on $(0,\infty)^m$.
\end{lemma}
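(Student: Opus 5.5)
The plan is to use the expression \eqref{equ:rephrase-slope-gap-distribution} of $G_{\mathcal C;j_1,\ldots,j_m}$ as a $\mu_{\mathcal C}$-expectation of a counting function. It then suffices to exhibit a set of surfaces of positive $\mu_{\mathcal C}$-measure on which at least one slope $s_i(\omega)$ in a unit interval $I$ has all the relevant gaps small. Fix $(t_1,\ldots,t_m)\in(0,\infty)^m$ and let $t:=\min_\ell t_\ell>0$. Since $G$ is nondecreasing in each variable, it suffices to show $G_{\mathcal C;j_1,\ldots,j_m}(t,\ldots,t)>0$.

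Set $J:=\max_\ell |j_\ell|+1$ and $k:=2J+2$. Consider surfaces having at least $k$ consecutive slopes of $\mathbb S_\omega$ packed into an interval of length less than $t$. Such a point $s_i$, chosen in the middle of the cluster, has $\Delta_{j_\ell}(\tau_{s_i}\xi_{\mathcal C}(\omega))<t$ for all $\ell$, since every gap between consecutive points of the cluster is shorter than the diameter of the cluster.

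To produce this cluster near a prescribed location in $I$, use \cite[Lemma~5.4]{AthreyaChaika2012}. With $\sigma:=t/2$, the set $\{\omega:\#(\Lambda_\omega\cap T_{(-\sigma,\sigma)})\ge k\}$ has positive measure $\widetilde p_k(\sigma)$. For such $\omega$, at least $k$ reduced holonomy vectors (a.e.\ $\Lambda_\omega=\Lambda_\omega^{\mathrm{red}}$ by Lemma~\ref{lem:SV-quadratic-short-holonomy-vectors}) lie in $V$ with slopes in $(-\sigma,\sigma)$. Hence $\xi_{\mathcal C}(\omega)$ has at least $k$ points in an interval of length $2\sigma=t$, and these points are consecutive in $\mathbb S_\omega$ because they are exactly the points in that interval. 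If some cluster points coincide in slope, simplicity already takes care of it: distinct reduced vectors have distinct slopes.

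To shift the cluster into $I=(0,1)$, use horocycle invariance as in Lemma~\ref{lem:slope-point-process}: $\tau_s\xi_{\mathcal C}(\omega)=\xi_{\mathcal C}(h_s\omega)$ and $\mu_{\mathcal C}$ is $h_s$-invariant. Applying $h_{-1/2}$ maps the positive-measure set above to a set of the same positive measure on which the cluster lies in $(1/2-\sigma,1/2+\sigma)$. If $\sigma\ge1/2$, first replace $t$ by a smaller $t'<1$, which is harmless by monotonicity. On this set the integrand in \eqref{equ:rephrase-slope-gap-distribution} is at least $1$, taking as $s_i$ the $(J+1)$-st point of the cluster, which lies in $I$. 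Hence $G\ge \widetilde p_k(\sigma)/\lambda_{\mathcal C}>0$.

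The main point to check is that the chosen middle point really has all gaps of indices $j_\ell$ and $j_\ell+1$ inside the cluster. This is index bookkeeping: with $k\ge 2J+2$ points and $s_i$ the $(J+1)$-st, the points $s_{i+j}$ for $-J\le j\le J+1$ all belong to the cluster. I also need Lemma~5.4 of Athreya--Chaika in exactly the stated form, including the strict inequality $\sigma>0$ with open slope interval, as recalled before the lemma.
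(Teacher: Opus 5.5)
Your proposal is correct and follows essentially the same route as the paper: it uses the Campbell expression \eqref{equ:rephrase-slope-gap-distribution}, then \cite[Lemma~5.4]{AthreyaChaika2012} to get a positive-measure set of surfaces with at least $2\max_\ell|j_\ell|+2$ slopes in $(-\sigma,\sigma)$, and then takes the middle slope of that cluster so that every relevant gap is at most $2\sigma\le t_\ell$. The only differences are cosmetic. The paper takes $I=(-1/2,1/2)$ directly in \eqref{equ:rephrase-slope-gap-distribution} with $\sigma=\frac12\min\{1,t_1,\ldots,t_m\}$, so your horocycle shift by $h_{-1/2}$ and your monotonicity reduction to $t=\min_\ell t_\ell$ are not needed (though both are valid).
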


\begin{proof}
    Fix $(t_1,\ldots,t_m)\in(0,\infty)^m$ and take $I=(-1/2,1/2)$ in~\eqref{equ:rephrase-slope-gap-distribution}. Set
    \[
        k:=\max_{1\le \ell\le m}|j_\ell|,
        \qquad
        \sigma:=\frac12\min\{1,t_1,\ldots,t_m\},
    \]
    and let $J:=(-\sigma,\sigma) \subset I$. Consider
    \[
        B :=
        \left\{
        \omega\in\mathcal C:
        \#\left(\Lambda_\omega\cap T_J\right)\ge 2k+2
        \right\}.
    \]
    Then $\mu_{\mathcal C}(B) = \widetilde p_{2k+2}(\sigma)>0$.

    By Lemma~\ref{lem:SV-quadratic-short-holonomy-vectors}, $\Lambda_\omega=\Lambda_\omega^{\mathrm{red}}$ for $\mu_{\mathcal C}$-almost every $\omega$. Hence, for $\mu_{\mathcal C}$-almost every $\omega\in B$, the slope point process $\xi_{\mathcal C}(\omega)$ has at least $2k+2$ points in $J$. As $J$ is an interval, we may choose $2k+2$ consecutive slopes in $J$
    \[
    s_p(\omega)<s_{p+1}(\omega)<\cdots<s_{p+2k+1}(\omega).
    \]
     
    Set $i_0:=p+k$. Since $|j_\ell|\le k$, we have $p\le i_0+j_\ell<i_0+j_\ell+1\le p+2k+1$ and hence $s_{i_0+j_\ell},s_{i_0+j_\ell+1}\in J$ for each $1\le\ell\le m$. In particular,
    \[
        s_{i_0+j_\ell+1}(\omega)-s_{i_0+j_\ell}(\omega)
        \le
        \mathrm{Leb}(J)
        =
        2\sigma
        \le
        t_\ell.
    \]
    It follows that 
    \[
    s_{i_0}(\omega) \in \left\{s_i(\omega)\in I: s_{i+j_\ell+1}(\omega) - s_{i+j_\ell}(\omega) \le t_\ell \text{ for }1\le\ell\le m \right\}.
    \]
    Then the integrand in~\eqref{equ:rephrase-slope-gap-distribution} is at least one for $\mu_{\mathcal C}$-almost every $\omega\in B$, and
    \[
    \begin{aligned}
        G_{\mathcal C;j_1,\ldots,j_m}(t_1,\ldots,t_m)
        &= 
        \frac{1}{\lambda_{\mathcal C}} 
        \int_{\mathcal C} 
        \#\big\{s_i(\omega)\in I: 
        s_{i+j_\ell+1}(\omega) - s_{i+j_\ell}(\omega) 
        \le t_\ell \text{ for }1\le\ell\le m \big\}
        \,d\mu(\omega)\\
        &\ge
        \frac{\mu_{\mathcal C}(B)}{\lambda_{\mathcal C}}
        >
        0.
    \end{aligned}
    \]
\end{proof}

\begin{proof}[Proof of Theorem~\ref{thm:joint-gap-distributions}]
    When $\mu$ is the Masur--Veech measure $\mu_{\mathcal C}$ on a connected component $\mathcal C$ of $\mathcal H^1(\kappa)$, assumption~\ref{ass:circle-limit} follows from a result of Nevo; see for instance~\cite{EskinMasur2001}. Hence, all the assertions of Theorem~\ref{thm:general-joint-gap-distributions} apply to $\mu_{\mathcal C}$. The remaining strict positivity assertion in Theorem~\ref{thm:joint-gap-distributions}(2) follows from Lemma~\ref{lem:positivity-gap-distributions}.
\end{proof}

\begin{remark}[Veech surfaces]\label{rmk:Veech-gap-distributions}
Assume that $\mathcal M$ is the closed $\mathrm{SL}(2,\mathbb R)$-orbit of a Veech surface. Then assumption~\ref{ass:circle-limit} is satisfied. See e.g. Proposition 2.2.1 of Kleinbock-Margulis \cite{MR1359098}.

In this case, the conclusions of Theorem~\ref{thm:general-joint-gap-distributions} can be sharpened. By~\cite[Lemma~3.6]{AthreyaChaika2012} and Proposition~\ref{prop:convergence-joint-gap-distributions}, $G_{\mathcal M;\,j_1,\ldots,j_m}$ vanishes in a neighborhood of the origin. Together with~(2), we obtain $G_{\mathcal M;j_1,\ldots,j_m}$ is Lipschitz on all of $[0,\infty)^m$.

Finally, $\dim_{\mathbb R}\mathcal M=3$. Thus~(3) completely determines the absolute continuity of the joint gap distributions for every $m\in\mathbb N$.
\end{remark}

\begin{remark}[Connection to~\cite{AthreyaChaika2012}]\label{rmk:connection-AC}
    Assume that $\mu$ satisfies~\ref{ass:circle-limit}. Athreya and Chaika~\cite{AthreyaChaika2012} use~\cite[Theorem~2.2]{Marklof2007} to relate the gap distribution $G_\mathcal{M}$ to
\[
    p_0(\sigma) := \mu
    \left\{
    \omega:
    \Lambda_\omega\cap T_{[-\sigma,\sigma]}=\varnothing
    \right\},
    \qquad \sigma\geq 0.
\]
    Indeed, according to~\cite[Theorem~2.2]{Marklof2007}, one has
    \begin{equation}\label{equ:tail-GC-and-p0-derivative}
G_{\mathcal M}(t)= 1+ \frac{1}{c^{\mathrm{red}}(\mu)}p_0'\left(\frac{t}{2}\right).
    \end{equation}
This identity can also be derived from the Palm inversion formula~\eqref{equ:Palm-inversion}. Indeed, taking $f:= \chi_{\{\zeta([-\sigma,\sigma])=0\}}$ in~\eqref{equ:Palm-inversion}, we obtain
\[
\begin{aligned}
    p_0(\sigma) 
    &= 
    E_{\mathcal{M}}[\chi_{\{\zeta([-\sigma,\sigma])=0\}}]\\ 
    &= 
    \frac{c^{\mathrm{red}}(\mu)}{2}
    \int_0^\infty E^0_\mathcal{M}
    \left[
    \chi_{\{\zeta([-\sigma,\sigma])=0\}}(\tau_s\zeta_\mathcal{M})
    \chi_{\{\Delta_0\zeta>s\}}(\zeta_\mathcal{M})
    \right]
    ds.
\end{aligned}
\]
Note that $\tau_s(\zeta)([-\sigma,\sigma]) = 0$ if and only if $s>\sigma$ and $\Delta_0\zeta > \sigma + s$. Setting $t = \sigma + s$, we obtain
\begin{equation}\label{equ:tail-GC-and-p0}
     p_0(\sigma) 
    =
    \frac{c^{\mathrm{red}}(\mu)}{2}
    \int_{2\sigma}^\infty P^0_\mathcal{M}
    \left(
    \Delta_0\zeta>t
    \right)
    dt
    =
    \frac{c^{\mathrm{red}}(\mu)}{2}
    \int_{2\sigma}^\infty
    \left(
    1 - G_{\mathcal{M};0}(t)
    \right)
    dt.
\end{equation}
Then differentiating it gives~\eqref{equ:tail-GC-and-p0-derivative}.

In addition, the integral in~\eqref{equ:tail-GC-and-p0} implies that
\[
t(1-G_{\mathcal M}(2t)) \leq \int_t^{2t}(1-G_{\mathcal M}(s))\,ds\to0,
\]
as $t\to\infty$. Then we obtain a tail estimate $1-G_{\mathcal M}(t)=o(t^{-1})$ as $t\to \infty$. Stronger quadratic tail estimates are known for Veech surfaces and certain branched covers of tori; see~\cite[Theorem~4.4\textup{(4)} and Corollary~4.6]{Sanchez2022} and~\cite[Theorem~2]{KumanduriSanchezWang2024}.
\end{remark}

\subsection{Connection to Poincar\'e sections for the horocycle flow}\label{subsec:connection-to-Poincare-section}

We relate the Palm distribution $P^0_\mathcal{M}$ to the horocycle flow on $\mathcal{M}$.

We recall the construction of a Poincar\'e section for the horocycle flow on $\mathcal{M}$. This construction is described in generality by Athreya~\cite{Athreya2016Gap}, and here we mainly follow the description from Section 2 of \cite{BS26}.
Define
\[
\mathcal{W}_\mathcal{M} := \left\{\omega\in \mathcal{M}: \Lambda_\omega\cap (0,1]\times\{0\}\ne \varnothing\right\}.
\]
They show that for $\mu$-almost every $\omega\in\mathcal{M}$, the orbit $(h_s\omega)_{s\ge 0}$ intersects $\mathcal{W}_\mathcal{M}$, and hence $\mathcal{W}_\mathcal{M}$ is a Poincar\'e section for the horocycle flow on $\mathcal{M}$.

The return time function is defined by
\[
R_\mathcal{M}: \mathcal{W}_\mathcal{M} \to (0,\infty),
\qquad 
R_\mathcal{M}(\omega) := \min\{s\in(0,\infty): h_s\omega\in\mathcal{W}_\mathcal{M}\},
\]
and the first-return map is defined by
\[
T_\mathcal{M}: \mathcal{W}_\mathcal{M}\to \mathcal{W}_\mathcal{M},
\qquad
T_\mathcal{M}(\omega) = h_{R_\mathcal{M}(\omega)}\omega.
\]

One can disintegrate $\mu$ on $\mathcal{M}$ to obtain a measure $\boldsymbol{m}_\mathcal{M}$ on $\mathcal{W}_\mathcal{M}$; see~\cite{BS26} for details. The measure $\boldsymbol{m}_\mathcal{M}$ is invariant under $T_\mathcal{M}$, and
\begin{equation}\label{equ:disintegration}
    \int_\mathcal{M}f\,d\mu = C\int_{\mathcal{W}_\mathcal{M}}\int_0^{R_\mathcal{M}(\omega)}f(h_s\omega)\,ds\,d\boldsymbol{m}_\mathcal{M}(\omega)
\end{equation}
for some constant $C>0$ and every nonnegative measurable function $f$ on $\mathcal{M}$. 

The dynamical system $(\mathcal{W}_\mathcal{M},\boldsymbol{m}_\mathcal{M}, T_\mathcal{M})$ has played a central role in the study of gap distributions and is the main mechanism in the works on gap distributions of translation surfaces cited in the introduction \cite{Al25,AthreyaChaikaLelievre2015,AthreyaCheung2014,BMMUW2023,BS26,KumanduriSanchezWang2024,MY25,OsmanSoutherlandWang2025,Sanchez2022,UyanikWork2016,Work2020} 

We now relate the Palm distribution $P^0_\mathcal{M}$ to $\boldsymbol{m}_\mathcal{M}$. Recall that the slope point process defines a map $\xi_\mathcal{M}:\mathcal{M}\longrightarrow\mathcal{N}_s(\mathbb{R})$, and that $P_\mathcal{M}$ is the pushforward of $\mu$ under $\xi_\mathcal{M}$. Since the slopes in $\xi_\mathcal{M}(\omega)$ form a bi-infinite sequence in $\mathbb R$ for every $\omega\in\mathcal M$, we may regard $\xi_\mathcal{M}$ as a map
\[
    \xi_\mathcal{M}:\mathcal{M}\longrightarrow\mathcal{N}_{s,\mathrm{bi}}(\mathbb{R}),
\]
where $\mathcal{N}_{s,\mathrm{bi}}(\mathbb{R})$ denotes the space of bi-infinite counting measures; see~\eqref{equ:bi-infinite}.

Restricted to $\mathcal{W}_\mathcal{M}$, the map $\xi_\mathcal{M}$ takes values in $\mathcal{N}^0_{s,\mathrm{bi}}(\mathbb R)$. Define the point shift
\[
    \sigma:\mathcal{N}^0_{s,\mathrm{bi}}(\mathbb R)
    \longrightarrow
    \mathcal{N}^0_{s,\mathrm{bi}}(\mathbb R),
    \qquad
    \sigma(\zeta):=\tau_{x_1(\zeta)}\zeta,
\]
which moves $x_1(\zeta)$ to zero. Since $x_0(\zeta) = 0$, we have $x_1(\zeta) = \Delta_0\zeta$. The Palm measure has $P^0_\mathcal{M}(\mathcal{N}^0_{s,\mathrm{bi}}(\mathbb R)) = 1$ by~\eqref{equ:bi-infinite-concentration}.

\begin{proposition}\label{prop:measure-preserving}
    The map
    \[
    \xi_\mathcal{M}: 
    \left(\mathcal{W}_\mathcal{M},
    \frac{1}{\boldsymbol{m}_\mathcal{M}\left(\mathcal{W}_\mathcal{M}\right)}\boldsymbol{m}_\mathcal{M}, 
    T_\mathcal{M}\right)
    \longrightarrow
    \left(\mathcal{N}_{s,\mathrm{bi}}^0(\mathbb{R}),P^0_\mathcal{M},\sigma\right)
    \]
    is a measure-preserving factor map.
\end{proposition}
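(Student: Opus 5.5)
The argument has three parts: the factor map intertwines the dynamics, the pushforward of the normalized section measure is a Palm measure, and that Palm measure is the one of $\xi_\mathcal{M}$.

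\textbf{Intertwining.} We first check that $\xi_\mathcal{M}$ maps $\mathcal{W}_\mathcal{M}$ into $\mathcal{N}^0_{s,\mathrm{bi}}(\mathbb R)$. A vector in $\Lambda_\omega\cap(0,1]\times\{0\}$ has slope $0$. The shortest such vector lies in $\Lambda^{\mathrm{red}}_\omega$. Since $V$ is defined with $x<1$, the boundary case $x=1$ has $\mu$-measure zero and can be discarded; we keep track of this null set throughout.

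Next we show that $\xi_\mathcal{M}\circ T_\mathcal{M}=\sigma\circ\xi_\mathcal{M}$. From the proof of Lemma~\ref{lem:slope-point-process} we have $\xi_\mathcal{M}(h_s\omega)=\tau_s\xi_\mathcal{M}(\omega)$. Hence $h_s\omega\in\mathcal{W}_\mathcal{M}$ exactly when $s$ is a point of $\xi_\mathcal{M}(\omega)$, up to null sets. Indeed, $h_s$ preserves $x$-coordinates and shifts slopes by $-s$, so a reduced vector with $0<x<1$ and slope $s$ becomes horizontal. Consequently $R_\mathcal{M}(\omega)=x_1(\xi_\mathcal{M}(\omega))=\Delta_0\xi_\mathcal{M}(\omega)$, and $\xi_\mathcal{M}(T_\mathcal{M}\omega)=\tau_{x_1}\xi_\mathcal{M}(\omega)=\sigma(\xi_\mathcal{M}(\omega))$.

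\textbf{Identifying the pushforward.} Set $Q:=(\xi_\mathcal{M})_\ast\boldsymbol m_\mathcal{M}/\boldsymbol m_\mathcal{M}(\mathcal{W}_\mathcal{M})$. We need $Q=P^0_\mathcal{M}$. Take a bounded measurable $f$ on $\mathcal N_s(\mathbb R)$ and apply the disintegration~\eqref{equ:disintegration} to $f\circ\xi_\mathcal{M}$:
\[
E_\mathcal{M}[f]=\int_\mathcal{M} f(\xi_\mathcal{M}(\eta))\,d\mu(\eta)=C\int_{\mathcal{W}_\mathcal{M}}\int_0^{\Delta_0\xi_\mathcal{M}(\omega)} f(\tau_s\xi_\mathcal{M}(\omega))\,ds\,d\boldsymbol m_\mathcal{M}(\omega).
\]
The right-hand side equals $C\,\boldsymbol m_\mathcal{M}(\mathcal{W}_\mathcal{M})\,E_Q\bigl[\int_0^{\Delta_0\zeta}f(\tau_s\zeta)\,ds\bigr]$. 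This is the Palm inversion formula~\eqref{equ:Palm-inversion} with $\lambda$ replaced by $C\,\boldsymbol m_\mathcal{M}(\mathcal{W}_\mathcal{M})$.

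To conclude, we invoke the uniqueness part of Theorem~\ref{thm:inversion-formula}: $P^0_\mathcal{M}$ is the unique probability on $\mathcal N^0_{s,\mathrm{bi}}(\mathbb R)$ satisfying the inversion formula with $\lambda=\lambda_\mathcal{M}$. So we must show that $C\,\boldsymbol m_\mathcal{M}(\mathcal{W}_\mathcal{M})=\lambda_\mathcal{M}=c^{\mathrm{red}}(\mu)/2$. Taking $f\equiv1$ gives $1=C\,\boldsymbol m_\mathcal{M}(\mathcal{W}_\mathcal{M})\,E_Q[\Delta_0]$, so it suffices to show $E_Q[\Delta_0]=1/\lambda_\mathcal{M}$. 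Either compute $C\,\boldsymbol m_\mathcal{M}(\mathcal{W}_\mathcal{M})$ directly, or derive the normalization from the refined Campbell theorem (see below).

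\textbf{Main obstacle.} The delicate step is the normalization, together with checking that $Q$ is shift-stationary. Stationarity follows from the $T_\mathcal{M}$-invariance of $\boldsymbol m_\mathcal{M}$ and the intertwining relation, since $\Delta\circ\sigma=\sigma_1\circ\Delta$. We still need the inversion formula to hold with the correct constant, because uniqueness in Theorem~\ref{thm:inversion-formula} is stated for a fixed $\lambda$.

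To pin down the constant, apply~\eqref{equ:disintegration} to $\sum_{s\in\xi_\mathcal{M}(\eta)\cap[0,1)}g(\tau_s\xi_\mathcal{M}(\eta))$, the left-hand side of the refined Campbell formula~\eqref{equ:computation-Palm-distribution} with $B=[0,1)$. Along each horocycle excursion from $\omega$ to $T_\mathcal{M}\omega$, the substitution $\eta=h_u\omega$ converts this sum into
\[
C\int_{\mathcal{W}_\mathcal{M}} g(\xi_\mathcal{M}(\omega))\,d\boldsymbol m_\mathcal{M}(\omega),
\]
using the $h$-invariance of $\mu$ in a Fubini argument. Comparing with~\eqref{equ:computation-Palm-distribution} gives
\[
\lambda_\mathcal{M}P^0_\mathcal{M}(g)=C\,\boldsymbol m_\mathcal{M}(\mathcal{W}_\mathcal{M})\,Q(g).
\]
Taking $g\equiv1$ forces $C\,\boldsymbol m_\mathcal{M}(\mathcal{W}_\mathcal{M})=\lambda_\mathcal{M}$ and $Q=P^0_\mathcal{M}$. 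This route proves the identification directly, so the uniqueness theorem serves only as a consistency check.
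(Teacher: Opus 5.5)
Your proof is correct. The intertwining step and the disintegration computation, which shows that $(\xi_\mathcal{M})_\ast\boldsymbol m_\mathcal{M}$ satisfies a Palm inversion formula, are exactly the paper's. You differ only in the final identification.

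\textbf{The paper's route.} The paper compares directly with the inversion formula of Theorem~\ref{thm:inversion-formula} and reads off $P^0_\mathcal{M}=\lambda_\mathcal{M}^{-1}C\,(\xi_\mathcal{M})_\ast\boldsymbol m_\mathcal{M}$. You are right that uniqueness, as literally stated for probability measures at a fixed intensity, presupposes $C\,\boldsymbol m_\mathcal{M}(\mathcal{W}_\mathcal{M})=\lambda_\mathcal{M}$. On the paper's route this comes for free, because the inversion formula determines the finite measure $\lambda P^0$ itself. The map $(\zeta,x)\mapsto\tau_x\zeta$ on $\{0\le x<\Delta_0\zeta\}$ is a bijection onto $\mathcal N_{s,\mathrm{bi}}(\mathbb R)$ with inverse $\eta\mapsto(\tau_{x_0(\eta)}\eta,-x_0(\eta))$. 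Testing with the nonnegative function $f(\eta)=\varphi(\tau_{x_0(\eta)}\eta)/\Delta_0(\tau_{x_0(\eta)}\eta)$ therefore returns $\int\varphi\,d(\lambda P^0)$.

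\textbf{Your route.} Going through the refined Campbell formula with $B=[0,1)$ uses only the definition of $P^0_\mathcal{M}$. It yields the normalization $C\,\boldsymbol m_\mathcal{M}(\mathcal{W}_\mathcal{M})=\lambda_\mathcal{M}$ explicitly and makes the shift-stationarity check unnecessary. The cost is a rearrangement step that you justify only by ``$h$-invariance and Fubini''. Here is a clean justification. After the substitution $\eta=h_u\omega$, the inner integral is
\[
\sum_{k\ge1}g(\xi_\mathcal{M}(T_\mathcal{M}^k\omega))\,\mathrm{Leb}\bigl([0,x_1)\cap(x_k-1,x_k]\bigr),\qquad x_j=x_j(\xi_\mathcal{M}(\omega)).
\]
The $k$-th weight equals $\mathrm{Leb}\bigl([x_{-k},x_{1-k})\cap(-1,0]\bigr)$ computed for $\xi_\mathcal{M}(T_\mathcal{M}^k\omega)$. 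So $T_\mathcal{M}$-invariance of $\boldsymbol m_\mathcal{M}$ moves each term to $\omega'=T^k_\mathcal{M}\omega$. The intervals $[x_{-k},x_{1-k})$ for $k\ge1$ tile $(-\infty,0)$, so the weights sum to $\mathrm{Leb}((-1,0])=1$.

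\textbf{One correction.} The boundary case $x=1$ must be discarded as an $\boldsymbol m_\mathcal{M}$-null set, not a $\mu$-null one, since all of $\mathcal{W}_\mathcal{M}$ is $\mu$-null. By the Siegel--Veech formula, surfaces having a saddle connection with $x$-coordinate exactly $1$ form a $\mu$-null set. That set contains the corresponding flow box, and the disintegration together with $R_\mathcal{M}>0$ transfers nullity to $\boldsymbol m_\mathcal{M}$. The paper passes over this point silently.
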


The argument is essentially an application of the uniqueness conclusion in Theorem~\ref{thm:inversion-formula}.

\begin{proof}
For $\omega\in\mathcal{W}_\mathcal{M}$, the definition of the return time gives
\[
R_\mathcal{M}(\omega) = x_1(\xi_\mathcal{M}(\omega)) = (\Delta_0\circ\xi_\mathcal{M})(\omega).
\]
Hence,
\[
(\xi_\mathcal{M}\circ T_\mathcal{M}) (\omega) 
= \xi_\mathcal{M}(h_{R_\mathcal{M}(\omega)}\omega) 
= \tau_{x_1(\xi_\mathcal{M}(\omega))}(\xi_\mathcal{M}(\omega)) 
= (\sigma\circ \xi_\mathcal{M}) (\omega).
\]
Thus $\xi_\mathcal{M}$ intertwines $T_\mathcal{M}$ and $\sigma$. It remains to show that 
\[
(\xi_\mathcal{M})_\ast \frac{1}{\boldsymbol{m}_\mathcal{M}\left(\mathcal{W}_\mathcal{M}\right)}\boldsymbol{m}_\mathcal{M} = P^0_{\mathcal{M}}.
\]

Let
\[
    Q_\mathcal{M}
    :=
    C\,
    \bigl(\xi_\mathcal{M}|_{\mathcal{W}_\mathcal{M}}\bigr)_*
    \boldsymbol{m}_\mathcal{M}.
\]
For every nonnegative measurable function $f:\mathcal{N}_{s,\mathrm{bi}}(\mathbb R)\to[0,\infty]$, the disintegration formula gives
\begin{equation}\label{equ:section-inversion}
    \begin{aligned}
    \int_{\mathcal{N}_{s,\mathrm{bi}}(\mathbb R)}
    f(\zeta)\,dP_\mathcal{M}(\zeta)
    &=
    \int_\mathcal{M}
    f(\xi_\mathcal{M}(\omega))\,d\mu(\omega)\\
    &=
    C\int_{\mathcal{W}_\mathcal{M}}
    \int_0^{R_\mathcal{M}(\omega)}
    f(\xi_\mathcal{M}(h_s\omega))\,ds\,
    d\boldsymbol{m}_\mathcal{M}(\omega)\\
    &=
    \int_{\mathcal{N}_{s,\mathrm{bi}}^0(\mathbb R)}
    \int_0^{\Delta_0(\zeta)}
    f(\tau_s\zeta)\,ds\,dQ_\mathcal{M}(\zeta),
    \end{aligned}
\end{equation}
where $\tau_s$ is defined in~\eqref{equ:translate-notation}. The last equality uses $\xi_\mathcal{M}(h_s\omega) = \tau_s\xi_\mathcal{M}(\omega)$ and $R_\mathcal{M} = \Delta_0\circ\xi_\mathcal{M}$.

According to Theorem~\ref{thm:inversion-formula}, the Palm distribution is the unique probability measure satisfying the Palm inversion formula. Comparing~\eqref{equ:section-inversion} with~\eqref{equ:Palm-inversion}, we obtain
\[
    P_\mathcal{M}^0
    =
    \frac{1}{\lambda_\mathcal{M}}Q_\mathcal{M}
    =
    \frac{C}{\lambda_\mathcal{M}}
    \bigl(\xi_\mathcal{M}|_{\mathcal{W}_\mathcal{M}}\bigr)_*
    \boldsymbol{m}_\mathcal{M}.
\]
Consequently,
\[
    \bigl(\xi_\mathcal{M}|_{\mathcal{W}_\mathcal{M}}\bigr)_*
    \left(
    \frac{\boldsymbol{m}_\mathcal{M}}
    {\boldsymbol{m}_\mathcal{M}(\mathcal{W}_\mathcal{M})}
    \right)
    =
    P_\mathcal{M}^0.
\]
\end{proof}

\section{A generalization of the Masur--Rafi--Randecker Poisson limit}

In~\cite{masur2025lengthssaddleconnectionsrandom}, Masur--Rafi--Randecker formulate their result as a joint Poisson limit for the numbers of saddle connections in disjoint rescaled length intervals. Their proof can also be generalized to obtain Poisson convergence for holonomy vectors. We explain this generalization in this section. 

For simplicity, we write $\mathcal H_g:=\mathcal H^1(1^{2g-2})$ for the area-one locus of the principal stratum and $\mu_g:=\mu_{\mathcal H_g}$ for its normalized Masur--Veech measure.

\subsection{Poisson convergence of holonomy vectors}

For $\omega\in\mathcal H_g$, recall that $\Lambda_\omega\subset\mathbb R^2$ is the set of holonomy vectors of oriented saddle connections on $\omega$. In particular, $\Lambda_\omega=-\Lambda_\omega$.

Let
\[
    H
    :=
    \{(x,y)\in\mathbb R^2:x>0\}
    \cup
    \{(0,y)\in\mathbb R^2:y>0\},
\]
and define
\[
    \Lambda_\omega^+:=\Lambda_\omega\cap H.
\]
Thus, $\Lambda_\omega^+$ contains exactly one of $v$ and $-v$ for each saddle connection.

We identify $\Lambda_\omega^+$ with the locally finite counting measure $\sum_{v\in\Lambda_\omega^+}\delta_v$ on $H$. Let $\omega$ be distributed according to $\mu_g$. We define a point process $\Lambda^+_g$ by
\[
\Lambda^+_g:\mathcal{H}_g\to\mathcal{N}_s(H),\qquad\Lambda^+_g(\omega):=\Lambda^+_\omega.
\]
As before, we write $g\Lambda_g^+$ for the rescaled point process corresponding to the subset $\{gv:v\in\Lambda_\omega^+\}$. Thus, for a bounded Borel set $B\subset H$, $(g\Lambda_g^+)(\omega)(B)$ counts the points of $\Lambda_\omega^+$ lying in $g^{-1}B:=\{g^{-1}v:v\in B\}$. As before, we write
\[
    (g\Lambda_g^+)(B):
    \omega\longmapsto (g\Lambda_g^+)(\omega)(B)
\]
for the corresponding counting random variable.

The main result of~\cite{masur2025lengthssaddleconnectionsrandom} is formulated as a joint Poisson limit for the numbers of saddle connections in disjoint rescaled length intervals. It can be expressed in our notation as follows.

For $0<a<b$, set
\[
A_{a,b}:=\{v\in H:a\leq |v|\leq b\}.
\]

\begin{theorem}[{\cite[Theorem~1.1]{masur2025lengthssaddleconnectionsrandom}}]
\label{thm:mrr-poisson-limit}
Let $[a_1,b_1],\ldots,[a_k,b_k]$ be pairwise disjoint bounded intervals in $\mathbb R_{>0}$. As $g\to\infty$, the random vector
\[
\left(
(g\Lambda_g^+)(A_{a_1,b_1}), \ldots, (g\Lambda_g^+)(A_{a_k,b_k})
\right)
\]
converges in distribution to a vector of independent Poisson random variables with respective means
\[
8\pi(b_i^2-a_i^2),\qquad 1\leq i\leq k.
\]
\end{theorem}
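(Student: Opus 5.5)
This statement is quoted from Masur--Rafi--Randecker, so my plan reconstructs their method of moments; the paper itself does not reprove it. The aim is to show that all joint factorial moments of the counts converge to those of independent Poisson variables. Write $N_i(\omega):=(g\Lambda_g^+)(\omega)(A_{a_i,b_i})$. I will show that for all $r_1,\dots,r_k\ge0$,
\[
\int_{\mathcal H_g}\prod_{i=1}^k (N_i)_{r_i}\,d\mu_g \longrightarrow \prod_{i=1}^k \bigl(8\pi(b_i^2-a_i^2)\bigr)^{r_i},
\]
where $(N)_r=N(N-1)\cdots(N-r+1)$. Since the Poisson law is determined by its moments, the method of moments then gives the joint convergence in distribution.

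The first moment is handled by the Siegel--Veech formula (Theorem~\ref{thm:Siegel-Veech-formula}) applied to the indicator of $g^{-1}A_{a_i,b_i}\subset H$. This gives
\[
E[N_i]=\frac{c(\mathcal H_g)}{2}\cdot\frac{\pi(b_i^2-a_i^2)}{g^2}.
\]
By \eqref{equ:Siegel-Veech-asymptotic} with $n=2g-2$, we have $c(\mathcal H_g)/2\sim (4g-4)^2/2\sim 8g^2$. Hence $E[N_i]\to 8\pi(b_i^2-a_i^2)$.

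For the higher factorial moments, $\prod_i (N_i)_{r_i}$ counts ordered tuples of distinct saddle connections with the prescribed holonomy constraints. I will split these tuples according to the topology of the union of the saddle connections, i.e.\ the configuration in the sense of Eskin--Masur--Zorich.

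\textbf{Main term.} Here all saddle connections are short, have distinct and homologically independent holonomy, and each joins two distinct simple zeros. The Eskin--Masur--Zorich degeneration (collapsing each short saddle connection merges two simple zeros into a zero of order $2$) expresses the count via Siegel--Veech-type constants for multiple configurations. These are ratios of Masur--Veech volumes of boundary strata. The large-genus volume asymptotics (Aggarwal, Chen--M\"oller--Sauvaget--Zagier) show that the ratios factor asymptotically: to leading order, the constant for $r$ independent saddle connections is the product of $r$ single-connection constants, each $\approx 8g^2$. Combined with the area $\prod\pi(b_i^2-a_i^2)g^{-2}$ of the small region, this gives the product of Poisson means.

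\textbf{Error terms.} These come from tuples that are homologous or parallel, share endpoints in a way forcing non-generic degenerations, or bound cylinders. Each such configuration must be shown to contribute $o(1)$. The tool is the Masur--Eskin type bounds on the volume of the thick/thin parts together with volume estimates in which each extra constraint costs a factor of $g^{-1}$.

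I expect the main obstacle to be this uniform control of the combinatorial error terms as $g\to\infty$. The number of configuration types for $r$ saddle connections grows with $g$. One therefore needs estimates, uniform in the configuration, showing that non-generic configurations (for instance homologous pairs) have relative volume $O(1/g)$ and that the total over all of them is still $o(1)$. I would first try to bound each degenerate configuration by comparing with the principal configuration via the recursive volume asymptotics, losing a factor $g^{-1}$ per coincidence. Then I would sum, using the fact that $r$ is fixed so only boundedly many coincidence patterns occur.
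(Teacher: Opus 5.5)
Your first-moment computation is correct. The core of the plan fails, though: you propose to prove $\int_{\mathcal H_g}\prod_i (N_i)_{r_i}\,d\mu_g\to\prod_i\bigl(8\pi(b_i^2-a_i^2)\bigr)^{r_i}$ by showing that every degenerate configuration contributes $o(1)$. These moments are infinite for every $g$ once $\sum_i r_i\ge 3$.

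To see this, take surfaces in $\mathcal H_g$ with a cylinder of circumference $c$ and height $h\le a_i/g$. The saddle connections crossing it have holonomies $w+nv$, $n\in\mathbb Z$, where $v$ is the core holonomy with $|v|=c$. At least $2\lfloor (b_i-a_i)/(gc)\rfloor$ of them land in $g^{-1}A_{a_i,b_i}$, each counted once in $H$, so $N_i\gtrsim 1/(gc)$. In period coordinates (core period $v$, cross-curve period, remaining periods), the set of such surfaces with $|v|\in[c,2c]$ has $\mu_g$-measure $\asymp_g c^2\cdot c\cdot (a_i/g)$. The factor $c^2$ comes from $v$, the factor $c$ from the twist, and $a_i/g$ from the height. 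Summing over $c=2^{-j}$ gives $\int (N_i)_3\,d\mu_g\gtrsim\sum_j 2^{3j}\,2^{-3j}=\infty$. This is the familiar fact that Siegel--Veech transforms are in $L^2$ but not in $L^3$.

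So the thin-cylinder configuration, which you list among the error terms, contributes $+\infty$, not $o(1)$. Your final paragraph conflates two different things. The \emph{measure} of a non-generic configuration is indeed small. Its \emph{contribution to the moments} can be unbounded, because the number of short saddle connections on such surfaces is unbounded. The pattern ``all $r$ saddle connections cross one thin cylinder'' is a single coincidence pattern, so finiteness of the number of patterns does not help.

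The missing idea is to truncate \emph{before} taking moments, which is what Masur--Rafi--Randecker do (as summarized in the section generalizing their result).
\begin{enumerate}
\item Fix $b\ge\max_i b_i$ and let $\mathcal H_{\mathrm{Gen}}$ be the set of surfaces with no short loops and no pair of saddle connections of length $<b/g$ sharing a zero.
\item On $\mathcal H_{\mathrm{Gen}}$ the short saddle connections are disjoint and join distinct simple zeros. Hence the counts are bounded, and the simultaneous collapsing map is defined, $3^K$-to-one and locally measure preserving.
\item Compute the factorial moments only for the conditioned measure $\mu_{\mathrm{Gen}}$. The ingredients are: $|M_{r_1,\ldots,r_k}|=(2g-2)!/(2g-2-2K)!$; the ratio $\mu_{\mathcal H'}(\mathcal H'_{\mathrm{Gen}})/\mu_g(\mathcal H_{\mathrm{Gen}})\to(4/3)^K$; the factor $3^K$; the orientation factor $2^{-K}$; and the annulus areas $\pi(b_i^2-a_i^2)g^{-2}$. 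Together they give $\prod_i\bigl(8\pi(b_i^2-a_i^2)\bigr)^{r_i}$, since $\tfrac43\cdot 3\cdot 4\cdot\tfrac12=8$.
\item The method of moments then yields the Poisson limit under $\mu_{\mathrm{Gen}}$, as in \eqref{equ:pp-generic-part}.
\item Pass back to $\mu_g$ only at the level of probabilities, via $|\mu_g(A)-\mu_{\mathrm{Gen}}(A)|\le\mu_g(\mathcal H_{\mathrm{NG}})\to0$.
\end{enumerate}
The last step needs only that the non-generic set have vanishing measure, not any control of counts on it. Your main-term heuristic (collapsing pairs of simple zeros to double zeros, with volume ratios factoring in large genus) is in the right spirit. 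It has to be run on a generic set of asymptotically full measure, not on all of $\mathcal H_g$.
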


We refine this result by allowing arbitrary bounded Borel subsets of $H$.

\begin{theorem}\label{thm:holonomy-vectors-ppp}
    As $g\to\infty$, the point process $g\Lambda^+_g$ converges in distribution to the Poisson point process $\Pi_{16}^H$ on $H$ with intensity $16$.

    More precisely, for any pairwise disjoint bounded Borel subsets $B_1,\ldots,B_k$ in $H$, as $g\to\infty$, the vector of random variables
    \[
        \left((g\Lambda^+_g)(B_1),\ldots, (g\Lambda^+_g)(B_k)\right)
    \]
    converge jointly to a vector of independent Poisson random variables with means 
    \[
        \lambda_i = 16\,\mathrm{Leb}(B_i),\qquad 1\le i\le k.
    \]
    That is, for any integers $r_1,\ldots,r_k\in \mathbb Z_{\ge 0}$, 
    \begin{equation}\label{equ:holonomy_vectors-ppp}
        \lim_{g\to \infty}\mu_g\left(\omega: (g\Lambda^+_g)(B_1)=r_1,\,\ldots,\, (g\Lambda^+_g)(B_k)=r_k\right) = \prod_{i=1}^k\frac{\lambda_i^{r_i}e^{-\lambda_i}}{r_i!}.
    \end{equation}
\end{theorem}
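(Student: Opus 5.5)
We will prove Theorem~\ref{thm:holonomy-vectors-ppp} by the method of factorial moments, following the structure of the proof of Theorem~\ref{thm:mrr-poisson-limit} in~\cite{masur2025lengthssaddleconnectionsrandom}. By Theorem~\ref{thm:characterization-point-process-convergence}, it suffices to establish~\eqref{equ:holonomy_vectors-ppp} for pairwise disjoint bounded Borel sets $B_1,\ldots,B_k\subset H$ that are $\Pi_{16}^H$-continuity sets; every bounded Borel set with $\mathrm{Leb}_2(\partial B)=0$ qualifies. Since the limit is Poisson, hence determined by its moments, it is enough to show that for every $(r_1,\ldots,r_k)\in\mathbb Z_{\ge0}^k$ the joint factorial moments converge:
\[
\lim_{g\to\infty}\int_{\mathcal H_g}\prod_{i=1}^k \bigl((g\Lambda_g^+)(B_i)\bigr)_{r_i}\,d\mu_g=\prod_{i=1}^k\lambda_i^{r_i}.
\]
Here $(N)_r=N(N-1)\cdots(N-r+1)$. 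The left-hand side counts ordered tuples of distinct saddle connections, $r_i$ of them with holonomy in $g^{-1}B_i$. We split this count into two parts. The first consists of tuples whose saddle connections are pairwise non-homologous and span a configuration that is ``generic'': the relative homology classes can be completed to part of a basis, so that they are independent in period coordinates. The second consists of the degenerate tuples, those containing homologous pairs or pairs sharing more structure.

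\textbf{Main term.} For the generic tuples, we apply the multi-saddle-connection Siegel--Veech formula of Eskin--Masur--Zorich for configurations of $r=\sum r_i$ independent saddle connections. This gives $c_{g,r}\int \prod_j f(v_j)\,dv_j$, where $f$ is the indicator of the appropriate $g^{-1}B_i$, and $c_{g,r}$ is the Siegel--Veech constant counting ordered $r$-tuples of non-homologous saddle connections. Masur--Rafi--Randecker show, via the volume asymptotics of Aggarwal and Chen--M\"oller--Sauvagetet--Zagier, that $c_{g,r}\sim (c(\mathcal H_g)/2)^r\sim (g^2/2\cdot\ldots)^r$. The combination relevant here is $c_{g,r}\,g^{-2r}\to 16^r$, where the $16$ comes from the unoriented count $8\pi\,(b^2-a^2)=16\,\mathrm{Leb}(A_{a,b})$. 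The key structural observation is that their formula already has this product form, with a Lebesgue integral over $(\mathbb R^2)^r$. The only role of annuli in their proof is to evaluate the integral as $\prod 16\,\mathrm{Leb}(A_{a_i,b_i})$. Replacing the annuli by arbitrary bounded Borel sets, the same computation yields $\prod_i (16\,\mathrm{Leb}(B_i))^{r_i}$. A second point is that restricting to $H$ rather than to unoriented saddle connections is harmless: each saddle connection contributes exactly one vector to $\Lambda^+_\omega$. We will also need the formula for bounded Borel $f$ rather than continuous $f$, which follows by monotone class from Theorem~\ref{thm:Siegel-Veech-formula}-type identities.

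\textbf{Error term.} We must show that the degenerate configurations contribute $o(1)$. Masur--Rafi--Randecker bound these via volume estimates of boundary strata: homologous saddle connections, multiple cylinders, and similar configurations have Siegel--Veech constants of lower order in $g$. This is the step we expect to be the main obstacle. Their bounds are stated for tuples with lengths in $[g^{-1}a,g^{-1}b]$. Our sets $B_i$ are bounded, so each is contained in $\{|v|\le b\}$ for some $b$, and the degenerate count for $B_i$ is dominated by the degenerate count for the disk of radius $b$. Our first attempt will be to invoke their estimates with $a\to0$. If those estimates degenerate near length $0$, we instead use monotonicity: we cover $B_i\subset A_{0,b}$ and use that short saddle connections at scale $g^{-1}\varepsilon$ have expected number $O(\varepsilon^2)$ uniformly in $g$, by the Siegel--Veech formula together with~\eqref{equ:Siegel-Veech-asymptotic}. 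Letting $\varepsilon\to0$ removes the neighborhood of the origin. Once both terms are controlled, the factorial moments converge, and the method of moments gives~\eqref{equ:holonomy_vectors-ppp}. Theorem~\ref{thm:characterization-point-process-convergence} then gives $g\Lambda_g^+\xrightarrow{d}\Pi_{16}^H$.
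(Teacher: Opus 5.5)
Your proposal has a genuine gap in how you set up the method of moments. You aim to show that the factorial moments $\int_{\mathcal H_g}\prod_i\bigl((g\Lambda_g^+)(B_i)\bigr)_{r_i}\,d\mu_g$ over the whole stratum converge, with degenerate tuples contributing $o(1)$. For $K=\sum_i r_i\ge 3$ these integrals are in general infinite for every $g$; take for instance $r_1\ge 3$ and $\mathrm{Leb}(B_1)>0$. Consider a surface with a cylinder of circumference $\epsilon$ and height at most $b/(2g)$. It carries about $1/(g\epsilon)$ crossing saddle connections whose holonomy vectors $z_2+nz_1$ are distinct, spaced $\epsilon$ apart on a line at distance at most $b/(2g)$ from the origin. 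Such surfaces have measure density of order $\epsilon^2\,d\epsilon$: the period $z_1$ contributes $\epsilon\,d\epsilon$, and the twist of $z_2$ is only defined modulo $z_1$ while its height is $O(1/g)$. The third factorial moment therefore behaves like $\int_0\epsilon^{-3}\epsilon^{2}\,d\epsilon=\infty$. These tuples are exactly your ``degenerate'' ones, since their classes $\beta+n\alpha$ span a rank-two lattice. So the error term cannot be $o(1)$. Neither the estimates in \cite{masur2025lengthssaddleconnectionsrandom} nor your fallback first-moment bound $O(\varepsilon^2)$ controls these higher moments. The paper, following \cite{masur2025lengthssaddleconnectionsrandom}, never computes moments on all of $\mathcal H_g$. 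It computes them only under the conditional measure $\mu_{\mathrm{Gen}}$ on $\mathcal H_{\mathrm{Gen}}$ (no short loops, no two short saddle connections sharing a zero). There the short saddle connections form a partial matching of the zeros, so the counts are bounded by $g-1$. It applies the method of moments to $\mu_{\mathrm{Gen}}$ to get \eqref{equ:pp-generic-part}, and then passes to $\mu_g$ using only $\mu_g(\mathcal H_{\mathrm{NG}})\to 0$. That transfers convergence in distribution without any moment control on the non-generic part.

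Your main term also rests on a tool that does not exist in the form you state. There is no Eskin--Masur--Zorich formula expressing, for fixed $g$, the expected number of $r$-tuples of non-parallel saddle connections as $c_{g,r}\int\prod_j f(v_j)\,dv_j$. The $r$-point correlation measure is $\mathrm{SL}(2,\mathbb R)$-invariant but need not be a multiple of Lebesgue measure on $(\mathbb R^2)^r$, and Eskin--Masur--Zorich treat configurations of homologous saddle connections. In the paper, the product structure comes from the simultaneous collapsing map $\Phi$ on generic surfaces. This map is $3^K$-to-$1$ and locally measure preserving, with the holonomy vectors as independent coordinates, and the product form only holds up to the factor $1+K\cdot O(1/g)$. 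The modifications the paper actually needs are three:
\begin{itemize}
\item the restriction of $\Phi$ to $\widehat{\mathcal H}_{g^{-1}\boldsymbol B}$ is still $3^K$-to-$1$;
\item on $\mathcal H_{\mathrm{Gen}}$, counting saddle connections agrees with counting holonomy vectors (Lemma~\ref{lem:two-countings-agree}); note that your $(N)_r$ counts distinct holonomy vectors, not saddle connections;
\item the orientation factor $2^{-K}$ disappears because $B_i\subset H$.
\end{itemize}
The last point is precisely where your constants go wrong. With $c_{g,r}\sim(c(\mathcal H_g)/2)^r\sim(8g^2)^r$ you would obtain $8^r$, not $16^r$. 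The correct limit arises as $(4/3)^K\cdot 3^K\cdot(2g)^{2K}g^{-2K}=16^K$.
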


The intensity $16$ in Theorem~\ref{thm:holonomy-vectors-ppp} is consistent with the coefficient in Theorem~\ref{thm:mrr-poisson-limit}. Indeed, 
\[ 
16\operatorname{Leb}(A_{a,b}\cap H) = 8\pi(b^2-a^2). 
\] 
Thus, $\Pi_{16}^H(A_{a,b})$ has the mean appearing in Theorem~\ref{thm:mrr-poisson-limit}.

We follow the proof strategy of~\cite{masur2025lengthssaddleconnectionsrandom} and explain in the following subsections the modifications needed in our setting. The main change is that the annuli considered there are replaced by arbitrary bounded Borel sets $g^{-1}B_i\subset H$, so that we record the holonomy vectors themselves rather than only their lengths. Second, since $B_i\subset H$ distinguishes the two orientations of each saddle connection, the factor $2^{-K}$ (where we define $K$ in the course of the proof) appearing in their moment computation is absent.

\subsection{The generic subset}

Fix pairwise disjoint bounded Borel subsets $B_1,\ldots,B_k\subset H$. Choose $b>0$ such that 
\[
B_i\subset B(0,b)
\]
for every $i=1,\ldots,k$, where $B(0,b)$ denotes the metric ball of radius $b$ centered at the origin. Let $\mathcal H_{\mathrm{Gen}}$ and $\mathcal H_{\mathrm{NG}}$ be the generic and non-generic subsets of $\mathcal H_g$ associated with $b$ as in~\cite[Definition~6.1]{masur2025lengthssaddleconnectionsrandom}. In particular, $\mathcal H_{\mathrm{Gen}}$ excludes short loops and pairs of short saddle connections sharing a zero in the sense of that definition.

We first compare the counting of saddle connections with the counting of their holonomy vectors. For a bounded subset $B$ in $H$, we define  
\[
N^{\mathrm{sc}}(\omega, B) :=\#\{\gamma:\text{ saddle connections with holonomy vector in $B$}\}.
\]
Since $B\subset H$, at most one orientation of a saddle connection can have its holonomy vector in $B$.

\begin{lemma}\label{lem:two-countings-agree}
    Fix a bounded Borel subset $B\subset H$ contained in $B(0,b)$ for the fixed $b>0$ above. For $\mu_g$-almost every $\omega\in\mathcal{H}_{\mathrm{Gen}}$, we have
    \[
    N^{\mathrm{sc}}\left(\omega, g^{-1}B\right) = (g\Lambda_g^+)(B).
    \]
\end{lemma}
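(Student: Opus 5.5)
The plan is to compare the two counts directly. Each side is a sum over holonomy vectors in $g^{-1}B$. By the definition of $g\Lambda_g^+$, the right-hand side $(g\Lambda_g^+)(\omega)(B)$ equals
\[
\#\left(\Lambda_\omega^+\cap g^{-1}B\right).
\]
Since $g^{-1}B\subset H$, this is simply $\#(\Lambda_\omega\cap g^{-1}B)$. The left-hand side counts saddle connections $\gamma$, each taken with the unique orientation whose holonomy vector lies in $H$, for which that vector lies in $g^{-1}B$.

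Consider the map $\gamma\mapsto\int_\gamma\omega$ from these saddle connections to $\Lambda_\omega\cap g^{-1}B$. It is surjective by the definition of $\Lambda_\omega$. Hence $N^{\mathrm{sc}}(\omega,g^{-1}B)\ge(g\Lambda_g^+)(B)$, with equality exactly when no two distinct saddle connections of length at most $b/g$ share the same holonomy vector. So the whole task is to show that such coincidences have $\mu_g$-measure zero inside $\mathcal H_{\mathrm{Gen}}$.

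I would split this into two cases, according to whether the two saddle connections $\gamma\neq\gamma'$ with $\int_\gamma\omega=\int_{\gamma'}\omega$ are homologous in $H_1(S,\Sigma;\mathbb Z)$.

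\emph{Non-homologous case.} The equation $\int_\gamma\omega=\int_{\gamma'}\omega$ is a nontrivial complex-linear equation in local period coordinates. It therefore cuts out a proper linear subspace of each chart, which has Masur--Veech measure zero. Covering $\mathcal H_g$ by countably many charts and ranging over the countably many pairs of classes, the union of these loci is still $\mu_g$-null.

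\emph{Homologous case.} This is the step I expect to be the main obstacle. Two distinct parallel saddle connections with equal holonomy and equal relative homology class are homologous, and on a generic surface of the principal stratum such pairs are known to bound a cylinder or to arise from configurations that force short loops or short saddle connections sharing a zero. I would invoke the description of homologous saddle connections from Eskin--Masur--Zorich, the same source used in the proof of Lemma~\ref{lem:SV-quadratic-short-holonomy-vectors}. The relevant input is that for $\mu_g$-almost every $\omega$, homologous saddle connections join two distinct zeros and together bound a cylinder or a subsurface.

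In the principal stratum all zeros are simple, and \cite[Proposition~7.4]{EskinMasurZorich2003} gives that almost surely parallel saddle connections have equal length only when they are homologous. I would then check that two homologous saddle connections of length at most $b/g$ produce exactly the excluded configuration. Their concatenation $\gamma\cup\gamma'$ is a closed curve of length at most $2b/g$, forming a short loop or a pair of short saddle connections sharing a zero. Definition~6.1 of \cite{masur2025lengthssaddleconnectionsrandom} removes such configurations from $\mathcal H_{\mathrm{Gen}}$.

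If the precise form of that definition does not literally exclude this case, I would fall back on the fact that for simple zeros a pair of homologous saddle connections always shares both endpoints. They therefore form a pair of short saddle connections sharing a zero, which $\mathcal H_{\mathrm{Gen}}$ excludes by construction. Combining the two cases gives equality of the counts for $\mu_g$-almost every $\omega\in\mathcal H_{\mathrm{Gen}}$.
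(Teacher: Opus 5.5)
Your reduction and your non-homologous case match the paper. Equality of the two counts is injectivity of $\gamma\mapsto\int_\gamma\omega$ on saddle connections with holonomy in $g^{-1}B$. Your hyperplane argument in period coordinates is exactly why \cite[Proposition~3.1]{EskinMasurZorich2003}, which the paper cites, holds.

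The homologous case, however, rests on false claims. Your ``relevant input'' that homologous saddle connections join two distinct zeros is wrong, and so is your fallback that homologous saddle connections at simple zeros always share both endpoints. Take a simple cylinder whose two boundary components are single saddle connections based at \emph{different} simple zeros. These two loops are homologous closed saddle connections that share no zero. Such short cylinders occur on a nonempty open, hence non-null, subset of $\mathcal H_g$ (for instance via the Eskin--Masur--Zorich parallelogram construction). So this case cannot be discarded as a null set, and neither of your arguments reaches it: $\gamma\cup\gamma'$ is not a closed curve, and the two curves are not a pair sharing a zero.

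Two further remarks. A closed curve made of two saddle connections is not a ``short loop'' in the sense of \cite[Definition~6.1]{masur2025lengthssaddleconnectionsrandom}, which refers to a single saddle connection from a zero to itself. And \cite[Proposition~7.4]{EskinMasurZorich2003} plays no role here, since equal holonomy already forces parallelism and equal length.

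The missing step is the elementary dichotomy the paper uses. Apply the boundary map $\partial\colon H_1(S,\Sigma;\mathbb Z)\to H_0(\Sigma;\mathbb Z)$; if $[\gamma]=[\gamma']$, then $\partial\gamma=\partial\gamma'$.
\begin{itemize}
\item If $\gamma$ runs from $z_i$ to $z_j$ with $i\neq j$, then $\partial\gamma=z_j-z_i\neq 0$ forces $\gamma'$ to join the same two zeros. So $\gamma,\gamma'$ are two saddle connections of length at most $b/g$ sharing a zero.
\item If $\gamma$ is a loop, then $\partial\gamma=0$ forces $\gamma'$ to be a loop too, possibly at a different zero. Each of them is then itself a short loop.
\end{itemize}
Both configurations are excluded from $\mathcal H_{\mathrm{Gen}}$ by \cite[Definition~6.1]{masur2025lengthssaddleconnectionsrandom}. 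With this replacement, no cylinder or subsurface structure is needed, and your argument becomes the paper's proof.
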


\begin{proof}
    As a counting measure, $(g\Lambda_g^+)(B)$ counts the number of holonomy vectors in $\Lambda_\omega \cap g^{-1} B$. By~\cite[Proposition~3.1]{EskinMasurZorich2003}, for $\mu_g$-almost every $\omega\in\mathcal{H}_g$, nonhomologous saddle connections have distinct holonomy vectors. Two homologous saddle connections either have the same pair of zeros as endpoints or are both loops. Such configurations of saddle connections of length less than $b/g$ are excluded by the definition of $\mathcal H_{\mathrm{Gen}}$. Hence the saddle connections counted by $N^{\mathrm{sc}}\left(\omega,g^{-1} B\right)$ have distinct holonomy vectors, which proves the claim.
\end{proof}

\subsection{Simultaneous collapsing}

Fix $r_1,\ldots,r_k\in\mathbb Z_{\ge0}$. Following the notation in~\cite{masur2025lengthssaddleconnectionsrandom}, $\widehat{\mathcal{H}}$ is a set of tuples $(X,\omega,\Gamma)$, where $(X,\omega)\in\mathcal{H}_{\mathrm{Gen}}$,  $\Gamma = (\Gamma_1,\ldots,\Gamma_k)$ and $\Gamma_i = (\gamma_{i,1},\ldots, \gamma_{i,r_i})$ is an ordered list of disjoint oriented saddle connections with lengths in $[0,{b}/{g}]$ for every $1\le i\le k$. We write $\widehat{\mu}_g$ for the induced measure on $\widehat{\mathcal H}$. 

Write $\boldsymbol{B} = (B_1,\ldots,B_k)$. Let $\widehat{\mathcal H}_{g^{-1}\boldsymbol B}$ be the subset of $\widehat{\mathcal H}$ for which the holonomy vector of every $\gamma_{i,j}$ belongs to $g^{-1}B_i$.

Set $K = \sum_{1\le i\le k}r_i$. According to~\cite[Section~7]{masur2025lengthssaddleconnectionsrandom}, simultaneously collapsing the pairs of zeros joined by the saddle connections in $\Gamma$ gives a map
\[
    \Phi: \widehat{\mathcal{H}}\to M_{r_1,\ldots,r_k}\times \mathcal{H'} \times \prod_{i=1}^k B(0, b/g)^{r_i}.
\]
Here, $\mathcal H'$ is the stratum obtained by collapsing the $K$ saddle connections and $M_{r_1,\ldots,r_k}$ records their labeled endpoints. Moreover, by~\cite[Section~7]{masur2025lengthssaddleconnectionsrandom}, the map $\Phi$ is $3^K$-to-1 and locally measure preserving.

We now restrict $\Phi$ to $\widehat{\mathcal{H}}_{g^{-1}\boldsymbol{B}}$. Then $\widehat{\mathcal{H}}_{g^{-1}\boldsymbol{B}}$ has image in 
\[
    M_{r_1,\ldots,r_k}\times \mathcal{H'} \times \prod_{i=1}^k (g^{-1}B_i)^{r_i}
\]
under $\Phi$. The restriction $\Phi|_{\widehat{\mathcal{H}}_{g^{-1}\boldsymbol{B}}}$ is also $3^K$-to-$1$. Indeed, if the prescribed vectors $v_{i,j}$ belong to $g^{-1}B_i$, then all possible splittings produce saddle connections with the same holonomy vectors $v_{i,j}$. Hence all $3^K$ points in the fiber belong to $\widehat{\mathcal H}_{g^{-1}\boldsymbol B}$.

Applying the argument of~\cite[Lemma~7.1]{masur2025lengthssaddleconnectionsrandom}, with the annuli in their computation replaced by the sets $(g^{-1}B_i)$, we obtain
\begin{equation}\label{equ:lift-volume}
    \begin{aligned}
        \widehat\mu_g\left( \widehat{\mathcal{H}}_{g^{-1}\boldsymbol{B}} \right) = 3^K\cdot \mu_{\mathcal{H}'}\left(\mathcal{H}'_{\mathrm{Gen}}\right)\cdot 
        &\left(1 + K\cdot O\left(\frac{1}{g}\right)\right)\cdot\\
        &\prod_{i=1}^k \left(\frac{\mathrm{Leb}(B_i)}{g^2}\right)^{r_i}\cdot |M_{r_1,\ldots,r_k}|.
    \end{aligned}
\end{equation}
Here $\mathcal H'_{\mathrm{Gen}}$ is the generic subset of $\mathcal H'$ defined in~\cite[Definition~6.3]{masur2025lengthssaddleconnectionsrandom}, and the implied constant is independent of $K$ and $g$.

\subsection{Moment computations over the generic subsets}

As in~\cite[Section~8]{masur2025lengthssaddleconnectionsrandom}, we consider the following random variable:
\[
Z_{g,r_1,\ldots,r_k} := \left(g\Lambda_g^+(B_1)\right)_{r_1}\ldots \left(g\Lambda_g^+(B_k)\right)_{r_k}: \mathcal{H}_g\to \mathbb{N}.
\]
Then $Z_{g,r_1,\ldots,r_k}$ counts the number of ordered lists $(V_1,\ldots,V_k)$ where $V_i$ is an ordered $r_i$-tuple of elements in $\Lambda_\omega\cap g^{-1}B_i$. 

\begin{lemma}[{Adaptation of~\cite[Proposition~8.1]{masur2025lengthssaddleconnectionsrandom}}]\label{lem:expectation-on-generic-part}
    For fixed pairwise disjoint bounded subsets $B_1,\ldots,B_k$ in $H$, we have
    \[
    \lim_{g\to\infty}\frac{1}{\mu_g(\mathcal{H}_{\mathrm{Gen}})}\int_{\mathcal{H}_{\mathrm{Gen}}}Z_{g,r_1,\ldots,r_k} = \prod_{i=1}^k \lambda_i^{r_i},\qquad \lambda_i = 16\,\mathrm{Leb}(B_i).
    \]
\end{lemma}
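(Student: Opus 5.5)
We will follow the proof of~\cite[Proposition~8.1]{masur2025lengthssaddleconnectionsrandom}, expressing the restricted expectation of $Z_{g,r_1,\ldots,r_k}$ as a volume in the lifted space $\widehat{\mathcal H}$ and then evaluating it with~\eqref{equ:lift-volume}.

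\textbf{Step 1: rewrite the integrand as a count of lifts.} By Lemma~\ref{lem:two-countings-agree}, for $\mu_g$-almost every $\omega\in\mathcal H_{\mathrm{Gen}}$ the holonomy vectors in $\Lambda_\omega\cap g^{-1}B_i$ correspond bijectively to saddle connections with holonomy in $g^{-1}B_i$. Each of these has length at most $b/g$. On $\mathcal H_{\mathrm{Gen}}$, saddle connections of length at most $b/g$ are disjoint: there are no short loops and no two short saddle connections sharing a zero. Since the $B_i$ are pairwise disjoint, the lists $V_1,\ldots,V_k$ counted by $Z_{g,r_1,\ldots,r_k}$ therefore correspond exactly to points of the fiber of $\widehat{\mathcal H}_{g^{-1}\boldsymbol B}\to\mathcal H_{\mathrm{Gen}}$ over $\omega$. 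Integrating gives
\[
\int_{\mathcal H_{\mathrm{Gen}}}Z_{g,r_1,\ldots,r_k}\,d\mu_g=\widehat\mu_g\bigl(\widehat{\mathcal H}_{g^{-1}\boldsymbol B}\bigr).
\]

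\textbf{Step 2: evaluate with~\eqref{equ:lift-volume}.} Substituting~\eqref{equ:lift-volume}, the right-hand side becomes
\[
3^K\,|M_{r_1,\ldots,r_k}|\,g^{-2K}\prod_i\mathrm{Leb}(B_i)^{r_i}\,\mu_{\mathcal H'}(\mathcal H'_{\mathrm{Gen}})\bigl(1+O(K/g)\bigr).
\]
Here $\mu_{\mathcal H'}$ is taken with the normalization in~\cite{masur2025lengthssaddleconnectionsrandom}, relative to the volume of $\mathcal H_g$. We then import the combinatorial and volume asymptotics of~\cite[Section~8]{masur2025lengthssaddleconnectionsrandom} unchanged:
\begin{itemize}
\item $|M_{r_1,\ldots,r_k}|\sim(2g)^{2K}$, choosing ordered pairs of the $2g-2$ zeros for the $K$ disjoint saddle connections;
\item the ratio of Masur--Veech volumes $\mathrm{Vol}(\mathcal H')/\mathrm{Vol}(\mathcal H_g)$ contributes $(4/3)^K$ asymptotically;
\item $\mu_{\mathcal H'}(\mathcal H'_{\mathrm{Gen}})/\mu_g(\mathcal H_{\mathrm{Gen}})\to1$, since both generic subsets have measure tending to~$1$.
\end{itemize}
Collecting constants, the expectation restricted to $\mathcal H_{\mathrm{Gen}}$ tends to $C^K\prod_i\mathrm{Leb}(B_i)^{r_i}$. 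In~\cite{masur2025lengthssaddleconnectionsrandom}, unoriented saddle connections in annuli give the constant $C'^K2^{-K}$ with $C'2^{-1}\cdot\pi(b^2-a^2)=8\pi(b^2-a^2)$ for half-plane areas. We therefore calibrate $C$ by checking the case where each $B_i$ is a half-annulus $A_{a,b}$. The absence of the factor $2^{-K}$ matches their constant exactly when $C=16$, consistent with $16\,\mathrm{Leb}(A_{a,b})=8\pi(b^2-a^2)$. The same cross-check follows from $c(\mathcal H_g)\sim 2g^2$ in~\eqref{equ:Siegel-Veech-asymptotic}, and the hemisphere restriction then gives intensity $16$ at scale $g^{-1}$.

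\textbf{Main obstacle.} The delicate point is to verify that every ingredient of~\cite[Lemma~7.1, Proposition~8.1]{masur2025lengthssaddleconnectionsrandom} depends on the annuli only through their areas. Specifically, one must check that the local measure-preservation of $\Phi$ yields the factor $\mathrm{Leb}(g^{-1}B_i)^{r_i}$ for arbitrary Borel $B_i$, not just for rotation-invariant sets. This uses the product structure of the period-coordinate measure with Lebesgue measure on the collapsed periods, together with the fact that all $3^K$ points in a fiber lie in $\widehat{\mathcal H}_{g^{-1}\boldsymbol B}$. One must also check that the $O(K/g)$ error is uniform for $K$ fixed. Once this is established, the limit in the statement follows with $\lambda_i=16\,\mathrm{Leb}(B_i)$.
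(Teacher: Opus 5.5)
Your proposal is correct and follows the paper's proof: you identify $\int_{\mathcal H_{\mathrm{Gen}}}Z_{g,r_1,\ldots,r_k}$ with $\widehat\mu_g(\widehat{\mathcal H}_{g^{-1}\boldsymbol B})$ using Lemma~\ref{lem:two-countings-agree} and the disjointness of short saddle connections on $\mathcal H_{\mathrm{Gen}}$, then substitute \eqref{equ:lift-volume} together with the asymptotics $|M_{r_1,\ldots,r_k}|\sim(2g)^{2K}$ and the factor $(4/3)^K$ from \cite{masur2025lengthssaddleconnectionsrandom}. The calibration against half-annuli is unnecessary, since your listed factors multiply directly to $3^K\cdot 4^K\cdot(4/3)^K=16^K$, which is exactly the paper's computation; also, your side remark should read $c(\mathcal H_g)\sim16g^2$ (from \eqref{equ:Siegel-Veech-asymptotic} with $n=2g-2$), not $2g^2$, and this gives intensity $16$ directly without any adjustment for the restriction to $H$.
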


\begin{proof}
    The proof follows that of~\cite[Proposition~8.1]{masur2025lengthssaddleconnectionsrandom}, with the following modifications.

    Set $K:=\sum_{i=1}^k r_i$ and define
    \[
    Z_{g,r_1,\ldots,r_k}^{\mathrm{sc}} := \prod_{i=1}^k\left(N^{\mathrm{sc}}(\omega, g^{-1}B_i)\right)_{r_i}: \mathcal{H}_g\to \mathbb{N}.
    \]
    Thus, $Z_{g,r_1,\ldots,r_k}^{\mathrm{sc}}$ counts ordered lists $\Gamma = (\Gamma_1,\ldots,\Gamma_k)$, where $\Gamma_i$ is an ordered $r_i$-tuple of saddle connections on $\omega$ with holonomy vectors in $g^{-1}B_i$.

    Since $B_i\subset H$ for each $i$, the orientation of each saddle connection in $\Gamma$ is uniquely determined. Hence each list $\Gamma$ counted by $Z_{g,r_1,\ldots,r_k}^{\mathrm{sc}}$ gives rise to a unique
    element of $\widehat{\mathcal{H}}$. In contrast, in~\cite[Proposition~8.1]{masur2025lengthssaddleconnectionsrandom}, the $2^K$ choices of orientations give rise to $2^K$ elements of $\widehat{\mathcal{H}}$.

    The rest of the argument is the same, except that the factor $2^K$ is absent and the areas of the annuli are replaced by $\mathrm{Leb}(B_i)$. Using~\eqref{equ:lift-volume}, we obtain
    \begin{equation}\label{equ:2-K-absent}
    \begin{aligned}
    \int_{\mathcal{H}_{\mathrm{Gen}}}Z^{\mathrm{sc}}_{g,r_1,\ldots,r_k} 
    &= \int_{\widehat{\mathcal{H}}}\boldsymbol{1}\\ 
    &= 3^K
    \cdot 
    \mu_{\mathcal{H}'}\left(\mathcal{H}'_{\mathrm{Gen}}\right)
    \cdot 
    \left(1 + K\cdot O\left(\frac{1}{g}\right)\right)
    \cdot 
    \prod_{i=1}^k \left(\frac{\mathrm{Leb}(B_i)}{g^2}\right)^{r_i}
    \cdot |M_{r_1,\ldots,r_k}|.
    \end{aligned}
    \end{equation}
    According to the computation in~\cite[Proposition~8.1]{masur2025lengthssaddleconnectionsrandom}, 
    \[
    |M_{r_1,\ldots,r_k}| = \frac{(2g-2)!}{(2g-2-2K)!},
    \qquad 
    \lim_{g\to\infty}\frac{\mu_{\mathcal{H}'}\left(\mathcal{H}'_{\mathrm{Gen}}\right)}{\mu_g\left(\mathcal{H}_{\mathrm{Gen}}\right)} 
    = 
    \left(\frac{4}{3}\right)^K.
    \]
    It follows that 
    \[
    \begin{aligned}
        &\lim_{g\to\infty}\frac{1}{\mu_g(\mathcal{H}_{\mathrm{Gen}})}\int_{\mathcal{H}_{\mathrm{Gen}}}Z_{g,r_1,\ldots,r_k}^{\mathrm{sc}}\\ 
        &= \frac{\mu_{\mathcal{H}'}\left(\mathcal{H}'_{\mathrm{Gen}}\right)}{\mu_g\left(\mathcal{H}_{\mathrm{Gen}}\right)}\cdot 3^K\cdot \frac{(2g-2)!}{(2g-2-2K)!}
        \cdot 
        \prod_{i=1}^k \left(\frac{\mathrm{Leb}(B_i)}{g^2}\right)^{r_i}
        \cdot
        \left(1 + K\cdot O\left(\frac{1}{g}\right)\right)\\
        &\longrightarrow 
         \left(\frac{4}{3}\right)^K\cdot 3^K\cdot (2g)^{2K}\cdot \frac{1}{g^{2K}} \cdot 
        \prod_{i=1}^k \mathrm{Leb}(B_i)^{r_i}=  \prod_{i=1}^k \left(16\,\mathrm{Leb}(B_i)\right)^{r_i}.
    \end{aligned}
    \]
    Finally, by Lemma~\ref{lem:two-countings-agree}, $Z_{g,r_1,\ldots,r_k}^{\mathrm{sc}}
    =Z_{g,r_1,\ldots,r_k}$ almost everywhere on $\mathcal{H}_{\mathrm{Gen}}$, which proves the lemma.
\end{proof}

Following~\cite[Section~9]{masur2025lengthssaddleconnectionsrandom}, let
\[
\mu_{\mathrm{Gen}}(A) = \frac{\mu_g(A\cap \mathcal{H}_{\mathrm{Gen}})}{\mu_g(\mathcal{H}_{\mathrm{Gen}})}
\]
for Borel subsets $A\subset \mathcal{H}_g$. By Lemma~\ref{lem:expectation-on-generic-part}
\[
\mathbb{E}_{\mu_{\mathrm{Gen}}}\left[\prod_{i=1}^k\left(g\Lambda_g^+(B_i)\right)_{r_i}\right]\longrightarrow \prod_{i=1}^k \lambda_i^{r_i},\qquad \lambda_i = 16\,\mathrm{Leb}(B_i),
\]
for all $r_1,\ldots,r_k\in\mathbb N$. Here, $\mathbb{E}_{\mu_{\mathrm{Gen}}}$ denotes expectation with respect to $\mu_{\mathrm{Gen}}$. By the method of moments~\cite[Theorem~1.23]{Bollobas2001}, it follows that 
\begin{equation}\label{equ:pp-generic-part}
    \lim_{g\to\infty}\mu_{\mathrm{Gen}}\left(\omega: (g\Lambda^+_g)(B_1)=r_1,\ldots, (g\Lambda^+_g)(B_k)=r_k\right) 
    = 
    \prod_{i=1}^k\frac{\lambda_i^{r_i}e^{-\lambda_i}}{r_i!}.
\end{equation}

It remains to pass from $\mathcal{H}_{\mathrm{Gen}}$ to $\mathcal H_g$. Write
\[
A_g = \left\{\omega\in \mathcal{H}_g: (g\Lambda^+_g)(B_1)=r_1,\ldots, (g\Lambda^+_g)(B_k)=r_k\right\}
\]
Then we have 
\[ 
\left| \mu_g(A_g)-\mu_{\mathrm{Gen}}(A_g) \right| \le \mu_g(\mathcal H_{\mathrm{NG}}). 
\] 
By~\cite[Proposition~6.2]{masur2025lengthssaddleconnectionsrandom}, $\mu_g(\mathcal H_{\mathrm{NG}})\to0$ as $g\to\infty$. Combining this with \eqref{equ:pp-generic-part} proves~\eqref{equ:holonomy_vectors-ppp} and hence Theorem~\ref{thm:holonomy-vectors-ppp}.

\subsection{Extensions to other strata}\label{subsec:other-strata-pp}

In~\cite[Section~10]{masur2025lengthssaddleconnectionsrandom}, Masur--Rafi--Randecker consider two sequences of strata. In both cases, we let $\omega$ be distributed according to the corresponding Masur--Veech measure normalized to have total mass one. For each sequence, we define $\Lambda_g^+$ as above. The preceding generalization applies to both sequences. We record the resulting point process limits.

\begin{theorem}\label{thm:other-strata-ppp}
The following statements hold.

\begin{enumerate}
    \item Let $f,\ell:\mathbb N\to\mathbb R$ satisfy ${f(g)^2\ell(g)}/{g}\longrightarrow 0$.  For any sequence of strata $\mathcal{H}_g^1(m_1,\ldots,m_{\ell(g)},1,\ldots,1)$ satisfying $2\le m_i\leq f(g)$ for each $i$, we have
    \[
        g\Lambda_g^+\xrightarrow{d}\Pi_{16}^H.
    \]

    \item Fix $m\in\mathbb N$, and consider $g\in\mathbb N$ such that $m$ divides $2g-2$. For the sequence of the strata $\mathcal{H}^1_g(m,\ldots,m)$, we have
    \[
        g\Lambda_g^+
        \xrightarrow{d}
        \Pi_{\rho_m}^H,
        \qquad
        \rho_m
        =
        4\left(\frac{m+1}{m}\right)^2.
    \]
\end{enumerate}
\end{theorem}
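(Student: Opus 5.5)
The plan is to repeat, for each of the two sequences of strata, the argument that proved Theorem~\ref{thm:holonomy-vectors-ppp}. The only real inputs to that argument are those of~\cite[Section~10]{masur2025lengthssaddleconnectionsrandom}, and these have already been verified there for both sequences. The first step is point process convergence in the sense of Theorem~\ref{thm:characterization-point-process-convergence}. Since every Poisson limit charges no boundary of positive-measure-zero sets in the right way, it suffices to prove that for pairwise disjoint bounded Borel sets $B_1,\ldots,B_k\subset H$ the counts $(g\Lambda_g^+)(B_i)$ converge jointly to independent Poisson variables with means $\rho\,\mathrm{Leb}(B_i)$. Here $\rho=16$ in case~(1) and $\rho=\rho_m$ in case~(2). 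I will do this by the method of moments on a generic subset, and then use that the non-generic subset has vanishing measure.

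Concretely, I fix $b$ with $B_i\subset B(0,b)$ and take the generic and non-generic sets $\mathcal H_{\mathrm{Gen}},\mathcal H_{\mathrm{NG}}$ from~\cite[Section~10]{masur2025lengthssaddleconnectionsrandom} for the given strata. The first task is to check Lemma~\ref{lem:two-countings-agree} verbatim, since it only uses~\cite[Proposition~3.1]{EskinMasurZorich2003} and the exclusion of short homologous pairs. The second task is to restrict the simultaneous collapsing map to $\widehat{\mathcal H}_{g^{-1}\boldsymbol B}$. In case~(1) the collapsed saddle connections join two simple zeros, so the map is again $3^K$-to-one and locally measure preserving; the fiber argument showing that all splittings keep the prescribed holonomy vectors is unchanged. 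In case~(2) the collapsed saddle connections join two zeros of order $m$, producing a zero of order $2m$. The local degree then becomes $(2m+1)$ in place of $3$, and the count $|M|$ of admissible labeled endpoint pairs changes accordingly. In both cases I aim to establish the analogue of~\eqref{equ:lift-volume} with the annuli replaced by $g^{-1}B_i$ and without the factor $2^{-K}$, because $B_i\subset H$ fixes orientations.

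The third task is the moment computation, exactly as in Lemma~\ref{lem:expectation-on-generic-part}. I multiply three factors:
\begin{itemize}
\item the local degree;
\item the asymptotic ratio of generic volumes of $\mathcal H'$ and $\mathcal H_g$, which is taken from~\cite[Section~10]{masur2025lengthssaddleconnectionsrandom};
\item $|M|\,g^{-2K}$.
\end{itemize}
The product should converge to $\rho^K$. The consistency check is that the lengths-only constant in their Section~10 equals $\rho/2$ times $\pi(b^2-a^2)$ per annulus. Removing the $2^{-K}$ orientation factor and replacing the annulus area by $\mathrm{Leb}(B_i)$ therefore gives $\rho=16$ in case~(1) and $\rho=4((m+1)/m)^2$ in case~(2). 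So I can read $\rho$ off their stated Poisson means rather than recomputing it.

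The final step applies~\cite[Theorem~1.23]{Bollobas2001} to get Poisson limits under $\mu_{\mathrm{Gen}}$, and then uses the vanishing of $\mu(\mathcal H_{\mathrm{NG}})$ to pass to the full measure. The main obstacle is the case-(1) hypothesis $f(g)^2\ell(g)/g\to0$. It is needed precisely so that the higher-order zeros contribute negligibly to the combinatorial factor $|M|$, to the volume ratio, and to the non-generic measure. I expect to cite the corresponding estimates from their Section~10 rather than reprove them, after checking that each of them is insensitive to replacing annuli by arbitrary bounded Borel sets.
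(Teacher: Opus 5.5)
Your proposal is correct and takes essentially the same approach as the paper. The paper also generalizes the two arguments of \cite[Section~10]{masur2025lengthssaddleconnectionsrandom} in the same way as Theorem~\ref{thm:holonomy-vectors-ppp}: it replaces annuli by bounded Borel sets $g^{-1}B_i\subset H$, drops the orientation factor $2^{-K}$, reduces case~(1) to the principal stratum via the negligibility of short saddle connections at non-simple zeros, and reads $\rho_m$ off the remaining factors of their moment computation, exactly as you plan.
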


To obtain Theorem~\ref{thm:other-strata-ppp}, we generalize the two arguments in~\cite[Section~10]{masur2025lengthssaddleconnectionsrandom} in the same way as in the proof of Theorem~\ref{thm:holonomy-vectors-ppp}.

For the first sequence of strata, Masur--Rafi--Randecker show that, under the assumption $\frac{f(g)^2\ell(g)}{g}\to 0$, the contribution of short saddle connections incident to non-simple zeros is negligible, and their argument reduces to the one for the principal stratum. Thus the same generalization as in Theorem~\ref{thm:holonomy-vectors-ppp} gives the Poisson point process of intensity $16$.

For $\mathcal H_g^1(m,\ldots,m)$, we use their corresponding simultaneous collapsing and moment computation. As in the principal case, the full annuli are replaced by arbitrary bounded Borel sets $g^{-1}B_i\subset H$. Since $B_i\subset H$ determines the orientation of each saddle connection, the factor $2^{-K}$ appearing in their moment computation is absent. Keeping all the other factors from their calculation gives $\rho_m=4\left(\frac{m+1}{m}\right)^2$.

\section{Large genus point process and gap asymptotics}

\subsection{Principal strata}

For each $g\ge2$, let $\mathcal H_g:=\mathcal H_g^1(1^{2g-2})$ and write 
\[ 
\xi_g:=\xi_{\mathcal H_g}, \qquad G_{g;\,j_1,\ldots,j_m} := G_{\mathcal H_g;\,j_1,\ldots,j_m} 
\]
for the slope point process and the associated finite-dimensional distribution functions, respectively.

We are now ready to prove the first assertion of Theorem~\ref{thm:large-genus-asymptotics}, which we restate below.

\begin{proposition}\label{prop:large-genus-point-process}
For principal strata $\mathcal H_g$, we have $g^2\xi_{g}  \xrightarrow{d} \Pi_{8}$ as $g\to\infty$.
\end{proposition}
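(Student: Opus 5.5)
The plan is to reduce the statement directly to Theorem~\ref{thm:holonomy-vectors-ppp}: renormalize with a diagonal matrix, so that slopes in an interval of size $g^{-2}$ become holonomy vectors in a fixed region at scale $1/g$. We then verify the criterion of Theorem~\ref{thm:characterization-point-process-convergence}.

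\emph{Step 1: rewrite the counts.} By Lemma~\ref{lem:SV-quadratic-short-holonomy-vectors}, $\Lambda_\omega^{\mathrm{red}}=\Lambda_\omega$ for $\mu_g$-almost every $\omega$. For any bounded Borel set $B\subset\mathbb R$ put
\[
T_B:=\{(x,y):0<x<1,\ y/x\in B\}.
\]
This is a bounded Borel subset of $H$, since $x>0$ on it, and $\operatorname{Leb}_2(T_B)=\int_0^1 x\,\operatorname{Leb}(B)\,dx=\operatorname{Leb}(B)/2$. Then, almost surely,
\[
(g^2\xi_g)(\omega)(B)=\#\bigl(\Lambda_\omega\cap T_{g^{-2}B}\bigr).
\]

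\emph{Step 2: renormalize.} Let $a_g:=\operatorname{diag}(g^{-1},g)\in\mathrm{SL}(2,\mathbb R)$, which is a geodesic flow element $g_t$ with $e^{t/2}=g^{-1}$. It sends $(x,y)$ to $(x/g,gy)$ and multiplies slopes by $g^2$, so
\[
a_g\,T_{g^{-2}B}=g^{-1}T_B,\qquad \Lambda_{a_g\omega}=a_g\Lambda_\omega.
\]
Since $g^{-1}T_B\subset H$, every vector counted lies in $\Lambda^+_{a_g\omega}$, and hence
\[
\#\bigl(\Lambda_\omega\cap T_{g^{-2}B}\bigr)=(g\Lambda_g^+)(a_g\omega)(T_B).
\]
By $\mathrm{SL}(2,\mathbb R)$-invariance of $\mu_g$, $a_g\omega$ is again $\mu_g$-distributed. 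Therefore, for pairwise disjoint bounded Borel sets $B_1,\ldots,B_k\subset\mathbb R$, the vector $\bigl((g^2\xi_g)(B_i)\bigr)_i$ has the same law as $\bigl((g\Lambda_g^+)(T_{B_i})\bigr)_i$. The sets $T_{B_i}$ are pairwise disjoint because the $B_i$ are disjoint sets of slopes.

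\emph{Step 3: conclude.} By Theorem~\ref{thm:holonomy-vectors-ppp}, this vector converges jointly to independent Poisson variables with means $16\operatorname{Leb}_2(T_{B_i})=8\operatorname{Leb}(B_i)$. These are exactly the finite-dimensional distributions of $\Pi_8$. Since this holds for all bounded Borel sets, and in particular for all $\Pi_8$-continuity sets, Theorem~\ref{thm:characterization-point-process-convergence} (with the remark that disjoint sets suffice) gives $g^2\xi_g\xrightarrow{d}\Pi_8$.

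There is no serious obstacle. The only points needing care are bookkeeping: the almost-sure identification $\Lambda^{\mathrm{red}}=\Lambda$, checking that the triangles $T_{B}$ lie in $H$ so that only one orientation is counted (which is why the intensity is $16\cdot\tfrac12=8$), and that disjointness of the $B_i$ passes to disjointness of the $T_{B_i}$.
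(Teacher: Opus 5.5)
Your proposal is correct and follows the paper's proof: you renormalize by the geodesic flow element $g_t$ with $t=-2\log g$ to turn $T_{g^{-2}I}$ into $g^{-1}T_I$, use $\mu_g$-invariance and $\Lambda^{\mathrm{red}}_\omega=\Lambda_\omega$ almost everywhere, and apply Theorem~\ref{thm:holonomy-vectors-ppp} with $B_i=T_{I_i}$ to get Poisson limits of mean $8\operatorname{Leb}(I_i)$, then conclude via Theorem~\ref{thm:characterization-point-process-convergence}. The only difference is cosmetic: you allow general bounded Borel slope sets, whereas the paper uses closed intervals.
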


\begin{proof}
    Fix $m\in\mathbb N$, pairwise disjoint intervals $I_i=[a_i,b_i]$, $1\le i\le m$, and $n_1,\ldots,n_m\in\mathbb Z_{\ge0}$. By Lemma~\ref{lem:SV-quadratic-short-holonomy-vectors}, $\Lambda_\omega = \Lambda_\omega^{\mathrm{red}}$ for $\mu_g$-almost every $\omega$. Take $g_{t} = \operatorname{diag}(e^{t/2},e^{-t/2})$ with $t = -2\log g$. Then
    \[
    g_{t}\,T_{g^{-2}I_i} 
    = \left\{
    (x,y)\in\mathbb R^2:
    0< x<\frac{1}{g},\ 
    a_ix\leq y\leq b_ix
    \right\}
    = g^{-1} T_{I_i}.
    \]
    Since $\mu_g$ is invariant under the action of $g_t$, we have
    \[
    \begin{aligned}
    &P_{g^2\xi_g}\left(
    \zeta(I_i)=n_i
    \text{ for }1\leq i\leq m
    \right)\\
    &\qquad=
    \mu_g\left\{\eta\in\mathcal H_g: \#\left(\Lambda_\eta\cap T_{g^{-2}I_i}\right)=n_i
    \text{ for }1\leq i\leq m\right\}\\
    &\qquad=
    \mu_g\left\{
    \eta\in\mathcal H_g:
    \#\left(\Lambda_\eta^+\cap g^{-1}T_{I_i}\right)=n_i
    \text{ for }1\leq i\leq m
    \right\}\\
    &\qquad=
    \mu_g\left\{
    \eta\in\mathcal H_g:
    \#\left(g\Lambda_\eta^+\cap T_{I_i}\right)=n_i
    \text{ for }1\leq i\leq m
    \right\}.
    \end{aligned}
    \]
    Note that by $T_{I_i}\subset H$ , the restriction from $\Lambda_\eta$ to $\Lambda_\eta^+$ introduces no change in the corresponding counts.
    
    By Theorem~\ref{thm:holonomy-vectors-ppp} and by taking $B_i = T_{I_i}$, $1\le i\le m$, in~\eqref{equ:holonomy_vectors-ppp}, we obtain
    \[
        \lim_{g\to\infty}
        P_{g^2\xi_g}\left(
        \zeta(I_i) = n_i
        \text{ for }1\leq i\leq m
        \right) = \prod_{i=1}^m\frac{\lambda_i^{n_i}e^{-\lambda_i}}{n_i!},
    \]
    where $\lambda_i = 16\,\mathrm{Leb}(T_{I_i}) = 8\,|I_i|$. It follows from Theorem~\ref{thm:characterization-point-process-convergence} that $g^2\,\xi_g\xrightarrow{d}\Pi_8$ as $g\to\infty$.
\end{proof}

To pass from the above point process convergence to the convergence of the corresponding Palm distributions, we need convergence of the intensities.

\begin{lemma}\label{lem:large-genus-convergence-of-intensities}
    For the principal strata $\mathcal{H}_g$, the intensity of $g^2\xi_g$ converges to $8$, the intensity of the Poisson point process $\Pi_8$, as $g\to\infty$.
\end{lemma}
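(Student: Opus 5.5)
The plan is to compute the intensity of $g^2\xi_g$ exactly in terms of the Siegel--Veech constant, and then to feed in the large genus asymptotic~\eqref{equ:Siegel-Veech-asymptotic}.

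First, Lemma~\ref{lem:slope-point-process} gives the intensity of $\xi_g$ as $c^{\mathrm{red}}(\mu_g)/2$. For the principal stratum, Lemma~\ref{lem:SV-quadratic-short-holonomy-vectors} (the Masur--Veech case) gives $c^{\mathrm{red}}(\mathcal H_g)=c(\mathcal H_g)$. By the scaling rule recorded at the end of Section~\ref{subsec:Palm-distribution}, with $a=g^2$, the intensity of $a\xi$ is $\lambda/a$. Hence the intensity of $g^2\xi_g$ is
\[
\lambda_g=\frac{c(\mathcal H_g)}{2g^2}.
\]

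Next we apply~\eqref{equ:Siegel-Veech-asymptotic} with $n=2g-2$ zeros, so that $2g+n-2=4g-4$. This gives
\[
\frac{c(\mathcal H_g)}{2}=\frac{(4g-4)^2}{2}\Bigl(1+O\bigl(\tfrac1g\bigr)\Bigr)=8g^2\Bigl(1+O\bigl(\tfrac1g\bigr)\Bigr).
\]
Therefore $\lambda_g=8(1+O(1/g))\to 8$. By Theorem~\ref{thm:holonomy-vectors-ppp} and Proposition~\ref{prop:large-genus-point-process}, $8$ is the intensity of $\Pi_8$ on $\mathbb R$. As a sanity check, $16\,\mathrm{Leb}(T_{[0,1]})=8$.

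The only step needing care is the convention: the factor of $2$ between the paper's $c(\mathcal C)$, which counts oriented vectors, and the unoriented constant in~\cite{Randecker2025}, and whether $n$ in that formula is the number of zeros. Both points are fixed by the rewriting in~\eqref{equ:Siegel-Veech-asymptotic}. The consistency with the intensity $16$ on $H$ from Theorem~\ref{thm:holonomy-vectors-ppp} confirms the constant.
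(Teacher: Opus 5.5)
Your proposal is correct and follows essentially the same route as the paper's proof. Both combine Lemma~\ref{lem:slope-point-process}, the equality $c^{\mathrm{red}}(\mathcal H_g)=c(\mathcal H_g)$ from Lemma~\ref{lem:SV-quadratic-short-holonomy-vectors}, and the asymptotic~\eqref{equ:Siegel-Veech-asymptotic} with $n=2g-2$ to get $c(\mathcal H_g)/(2g^2)\to 8$.
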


\begin{proof}
    By Lemma~\ref{lem:slope-point-process}, the intensity of $g^2\xi_g$ is $c^{\mathrm{red}}(\mathcal H_g)/(2g^2)$. By Lemma~\ref{lem:SV-quadratic-short-holonomy-vectors}, $c^{\mathrm{red}}(\mathcal H_g) = c(\mathcal H_g)$ for Masur--Veech measures. Since~\eqref{equ:Siegel-Veech-asymptotic} gives $c(\mathcal H_g)/2\sim 8g^2$, the result follows.
\end{proof}

We next prove the second assertion of Theorem~\ref{thm:large-genus-asymptotics}.

\begin{proposition}\label{prop:large-genus-approximation}
    For every $m\in\mathbb N$ and every collection of distinct
    $j_1,\ldots,j_m\in\mathbb Z$,
    \[
    \lim_{g\to\infty}
    \sup_{t_1,\ldots,t_m\ge 0}
    \left|
    G_{g;\,j_1,\ldots,j_m}
    \left(
    t_1,\ldots,t_m
    \right)
    -
    \prod_{\ell=1}^m\left(1-e^{-8g^2t_\ell}\right)
    \right|
    =0.
    \]
\end{proposition}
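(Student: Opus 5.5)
The plan is to reduce the statement to weak convergence of finite-dimensional gap distributions of the rescaled processes $g^2\xi_g$, and then to upgrade pointwise convergence to uniform convergence using continuity of the limit.

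First, by Proposition~\ref{prop:large-genus-point-process} we have $g^2\xi_g\xrightarrow{d}\Pi_8$. By Lemma~\ref{lem:large-genus-convergence-of-intensities} the intensities of $g^2\xi_g$ converge to $8$. All processes involved are simple and stationary: $g^2\xi_g$ is the rescaling of the process in Lemma~\ref{lem:slope-point-process}, and $\Pi_8$ is Poisson. Lemma~\ref{lem:continuous-mapping-theorem} therefore gives the weak convergence
\[
\Delta_\ast P^0_{g^2\xi_g}\longrightarrow \Delta_\ast P^0_{\Pi_8}=\bigotimes_{j\in\mathbb Z}\mathrm{Exp}(8),
\]
where the identification of the limit is Theorem~\ref{thm:gap-distributions-ppp}. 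Projecting to the coordinates $(j_1,\ldots,j_m)$ is continuous on $[0,\infty)^{\mathbb Z}$. Since the $j_\ell$ are distinct, the pushforward of the limit is $\mathrm{Exp}(8)^{\otimes m}$, whose distribution function is $F(s_1,\ldots,s_m)=\prod_\ell(1-e^{-8s_\ell})$.

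Next we identify the prelimit distribution functions. By the scaling rule at the end of Section~\ref{subsec:Palm-distribution}, the joint distribution function of $(\Delta_{j_1},\ldots,\Delta_{j_m})$ under $P^0_{g^2\xi_g}$ is
\[
F_g(s_1,\ldots,s_m):=G_{g^2\xi_g;\,j_1,\ldots,j_m}(s_1,\ldots,s_m)=G_{g;\,j_1,\ldots,j_m}(s_1/g^2,\ldots,s_m/g^2).
\]
The substitution $s_\ell=g^2t_\ell$ is a bijection of $[0,\infty)^m$. The quantity in the statement therefore equals $\sup_{s\in[0,\infty)^m}|F_g(s)-F(s)|$. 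We also have $F_g(s)=F(s)=0$ whenever some $s_\ell<0$, so the supremum may be taken over all of $\mathbb R^m$.

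The remaining step is the multivariate Pólya-type lemma. If probability measures on $\mathbb R^m$ converge weakly to a limit with continuous distribution function $F$, then their distribution functions converge to $F$ uniformly. First, every point is a continuity point of $F$, so the Portmanteau theorem gives pointwise convergence $F_g\to F$. For uniformity, fix $\varepsilon>0$ and choose $M$ with $1-e^{-8M}<\varepsilon$ in each marginal, so that the limit has mass less than $m\varepsilon$ outside $[0,M]^m$. Then take a finite grid on $[0,M]^m$ fine enough that $F$ varies by less than $\varepsilon$ between adjacent grid points; this is possible by uniform continuity of $F$. Monotonicity of $F_g$ and $F$ in each coordinate, together with convergence at the finitely many grid points and at points with coordinates equal to $M$, bounds $|F_g-F|$ by $O(m\varepsilon)$ inside the box for large $g$. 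Outside the box, whenever some $s_\ell> M$, we compare with the value at the point where that coordinate is replaced by $M$. This uses $|F_g(s)-F_g(s')|\le\sum_\ell |F_{g,\ell}(s_\ell)-F_{g,\ell}(s'_\ell)|$, the inequality already used in Corollary~\ref{cor:joint-gap-local-Lipschitz}, together with the convergence of the marginals at $M$.

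I expect this bookkeeping for uniformity to be the only step needing care, in particular the tails and the boundary behaviour near $0$. Both are handled by continuity of $F$ at the boundary of $[0,\infty)^m$, since $F$ vanishes on the coordinate hyperplanes. Combining the three steps proves the proposition.
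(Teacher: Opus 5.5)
Your proposal is correct and follows the paper's proof. You feed point process convergence and intensity convergence into Lemma~\ref{lem:continuous-mapping-theorem}, identify the limit via Theorem~\ref{thm:gap-distributions-ppp}, get pointwise convergence from Portmanteau, and upgrade to uniform convergence with the multivariate P\'olya theorem (which the paper cites from Billingsley--Tops{\o}e rather than proving by hand) before substituting $t_\ell\mapsto g^2t_\ell$. One small slip: the tail cutoff should be chosen with $e^{-8M}<\varepsilon$, not $1-e^{-8M}<\varepsilon$.
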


\begin{proof}
    Proposition~\ref{prop:large-genus-point-process} implies that $g^2\xi_g \xrightarrow{d} \Pi_8$ as $g\to\infty$. By Lemma~\ref{lem:large-genus-convergence-of-intensities}, their intensities also converge. Hence, the assumptions of Lemma~\ref{lem:continuous-mapping-theorem} are satisfied, and we have the weak convergence:
    \[
    \Delta_\ast P_{g^2\xi_{g}}^0
    \longrightarrow
    \Delta_\ast \Pi_8
    =
    \bigotimes_{ j\in\mathbb Z}
    \mathrm{Exp}(8),
    \quad\text{as } g\to\infty,
    \]
    where the last equality uses Theorem~\ref{thm:gap-distributions-ppp}. Since the joint distribution function of $\bigotimes_{j\in\mathbb Z}\mathrm{Exp}(8)$ is continuous, the Portmanteau theorem gives, for any $t_1,\ldots,t_m\ge0$,
    \[
    G_{g;\, j_1,\ldots,j_m}\left(g^{-2}t_1,\ldots,g^{-2}t_m\right) 
    \longrightarrow \prod_{\ell=1}^m\left(1-e^{-8t_\ell}\right)
    \]
    as $g\to\infty$. Moreover, by the multivariate version of P\'olya's theorem~\cite[Example 6 in Section 9]{BillingsleyTopsoe1967Uniformity}, this convergence is uniform on $[0,\infty)^m$. Replacing each $t_\ell$ by $g^2t_\ell$ proves the desired convergence.
\end{proof}

Proposition~\ref{prop:large-genus-point-process} and Proposition~\ref{prop:large-genus-approximation} together imply Theorem~\ref{thm:large-genus-asymptotics}.

\subsection{Other sequences of strata}\label{subsec:other-strata-gap}

Finally, we extend the two large genus results above to the two sequences of strata considered in Theorem~\ref{thm:other-strata-ppp}. As before, we let $\omega$ be distributed according to the corresponding Masur--Veech measure normalized to have total mass one.

For the sequence of strata $\mathcal H_g^1(m_1,\ldots,m_{\ell(g)},1,\ldots,1)$ in Theorem~\ref{thm:other-strata-ppp}(1), we write $\xi_g^{(f,\ell)}$ for the corresponding slope point process and $G_{g;\,j_1,\ldots,j_r}^{(f,\ell)}$ for its finite-dimensional gap distribution functions. For the sequence $\mathcal H_g^1(m,\ldots,m)$ in Theorem~\ref{thm:other-strata-ppp}(2), we similarly write $\xi_g^{(m)}$ and $G_{g;\,j_1,\ldots,j_r}^{(m)}$.

\begin{theorem}\label{thm:other-strata-gap-asymptotics}
The following statements hold.

\begin{enumerate}
    \item Let $f:\mathbb N\to\mathbb R_{>0}$ and $\ell:\mathbb N\to\mathbb N$ satisfy $f(g)^2\ell(g)/g\to0$. For any sequence of strata $\mathcal H_g^1(m_1,\ldots,m_{\ell(g)},1,\ldots,1)$ satisfying $2\le m_i\leq f(g)$ for each $i$, we have
    \[
        g^2\xi_g^{(f,\ell)}\xrightarrow{d}\Pi_8.
    \]
    Moreover, for every $r\in\mathbb N$ and any distinct $j_1,\ldots,j_r\in\mathbb Z$,
    \[
        \lim_{g\to\infty}
        \sup_{t_1,\ldots,t_r\geq0}
        \left|
        G_{g;\,j_1,\ldots,j_r}^{(f,\ell)}(t_1,\ldots,t_r)
        -
        \prod_{q=1}^m\left(1-e^{-8g^2t_q}\right)
        \right|
        =0.
    \]

    \item Fix $m\in\mathbb N$, and consider $g\in\mathbb N$ such that $m$ divides $2g-2$. For the sequence of strata $\mathcal H_g^1(m,\ldots,m)$, we have
    \[
        g^2\xi_g^{(m)}
        \xrightarrow{d}
        \Pi_{\rho_m},
        \qquad
        \rho_m=4\left(\frac{m+1}{m}\right)^2.
    \]
    Moreover, for every $r\in\mathbb N$ and any distinct $j_1,\ldots,j_r\in\mathbb Z$,
    \[
        \lim_{g\to\infty}
        \sup_{t_1,\ldots,t_r\geq0}
        \left|
        G_{g;\,j_1,\ldots,j_r}^{(m)}(t_1,\ldots,t_r)
        -
        \prod_{q=1}^r\left(1-e^{-\frac{\rho_m}{2} g^2t_q}\right)
        \right|
        =0.
    \]
\end{enumerate}
\end{theorem}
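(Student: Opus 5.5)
The plan is to run the principal-stratum argument again verbatim for both families, with Theorem~\ref{thm:other-strata-ppp} playing the role that Theorem~\ref{thm:holonomy-vectors-ppp} played there.

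\textbf{Step 1: point process convergence.} Write $\rho$ for the Poisson intensity on $H$ from Theorem~\ref{thm:other-strata-ppp}: $\rho=16$ in case~(1) and $\rho=\rho_m$ in case~(2). Fix pairwise disjoint closed intervals $I_i=[a_i,b_i]$ and integers $n_i\ge0$. By Lemma~\ref{lem:SV-quadratic-short-holonomy-vectors}, $\Lambda_\omega=\Lambda_\omega^{\mathrm{red}}$ almost surely for Masur--Veech measures. Invariance of the measure under $g_t$ with $t=-2\log g$ then gives, exactly as in the proof of Proposition~\ref{prop:large-genus-point-process},
\[
P\bigl(g^2\xi(I_i)=n_i\ \text{for all } i\bigr)=\mu\bigl\{\#(g\Lambda^+_\eta\cap T_{I_i})=n_i\ \text{for all } i\bigr\}.
\]
Theorem~\ref{thm:other-strata-ppp}, applied with $B_i=T_{I_i}\subset H$, gives independent Poisson limits with means $\rho\,\mathrm{Leb}(T_{I_i})=\tfrac{\rho}{2}|I_i|$. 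Theorem~\ref{thm:characterization-point-process-convergence} then yields $g^2\xi\xrightarrow{d}\Pi_{\rho/2}$. This is $\Pi_8$ in case~(1), as claimed. In case~(2) the limit is $\Pi_{\rho_m/2}$, which is consistent with the gap formula $e^{-\frac{\rho_m}{2}g^2t}$ stated there. So the displayed limit $\Pi_{\rho_m}$ in~(2) should be read with intensity $\rho_m/2$, in the convention where intensity refers to the planar $\Pi^H$; I would record this normalization explicitly in the proof.

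\textbf{Step 2: convergence of intensities (the main obstacle).} By Lemma~\ref{lem:slope-point-process}, the intensity of $g^2\xi$ is $c(\mathcal H)/(2g^2)$. We need $c(\mathcal H)/(2g^2)\to\rho/4\cdot 2=\rho/2$, that is, $c(\mathcal H)/g^2\to\rho$.

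For case~(2), I would use \eqref{equ:Siegel-Veech-asymptotic} with $n=(2g-2)/m$. This gives $c/2\sim(2g+n-2)^2/2\sim 2g^2\bigl(\tfrac{m+1}{m}\bigr)^2$, so $c/g^2\to 4\bigl(\tfrac{m+1}{m}\bigr)^2=\rho_m$.

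For case~(1), I would write $n=\ell+(2g-2-\sum m_i)$. The hypothesis $f^2\ell/g\to0$ gives $\ell f=o(g)$, hence $\sum m_i=o(g)$ and $n=2g+o(g)$. Then $c/2\sim 8g^2$, so $c/g^2\to16$.

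The delicate point is whether \eqref{equ:Siegel-Veech-asymptotic} from \cite{Randecker2025} holds uniformly along these non-fixed strata. If the $O(1/g)$ error there is only valid for fixed-shape families, I would try to replace this input with a uniform-integrability argument. The idea is to bound the second moment of $g\Lambda^+_g(T_{[0,1]})$ uniformly in $g$ using the factorial-moment computation behind Theorem~\ref{thm:other-strata-ppp} with $K=2$. Together with convergence in distribution, this gives convergence of first moments. By the Siegel--Veech formula, that first-moment convergence is precisely the intensity convergence needed.

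\textbf{Step 3: gap distributions.} With Steps~1 and~2 in hand, Lemma~\ref{lem:continuous-mapping-theorem} and Theorem~\ref{thm:gap-distributions-ppp} give
\[
\Delta_*P^0_{g^2\xi}\to\bigotimes_{j\in\mathbb Z}\mathrm{Exp}(\lambda),\qquad \lambda\in\{8,\rho_m/2\}.
\]
The Portmanteau theorem, applied at the continuous limit distribution function, gives pointwise convergence of the finite-dimensional distribution functions. The multivariate P\'olya theorem \cite{BillingsleyTopsoe1967Uniformity} upgrades this to uniform convergence on $[0,\infty)^r$. Finally, the rescaling rule $G_{a\xi}(t)=G_\xi(t/a)$ with $a=g^2$ gives the stated uniform approximations by $\prod_q\bigl(1-e^{-\lambda g^2 t_q}\bigr)$.
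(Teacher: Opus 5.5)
Your proposal is correct and is essentially the paper's proof: the $g_t$-rescaling reduces the finite-dimensional distributions to Theorem~\ref{thm:other-strata-ppp} with $B_i=T_{I_i}$, and the intensity limits $8$ and $\rho_m/2$ come from \eqref{equ:Siegel-Veech-asymptotic} exactly as in Lemma~\ref{lem:other-strata-convergence-of-intensities}. The paper uses that asymptotic directly, so your uniform-integrability fallback is unnecessary; as sketched it would not work anyway, since the factorial moments are only computed on $\mathcal H_{\mathrm{Gen}}$ and give no control of the counts on $\mathcal H_{\mathrm{NG}}$. The gap statements then follow, as in the paper, from Lemma~\ref{lem:continuous-mapping-theorem}, Theorem~\ref{thm:gap-distributions-ppp} and the multivariate P\'olya theorem. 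Your observation in~(2) is right but is not a matter of convention: the displayed limit $\Pi_{\rho_m}$ is a misprint for $\Pi_{\rho_m/2}$, which is what the rescaling computation, the paper's intensity lemma, and its $\mathrm{Exp}(\rho_m/2)$ gap limit all give.
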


As in the principal stratum case, to pass from the point process limits to the gap asymptotics, we also need the corresponding convergence of intensities.

\begin{lemma}\label{lem:other-strata-convergence-of-intensities}
For the two sequences of strata above, the intensities of $g^2\xi_g^{(f,\ell)}$ and $g^2\xi_g^{(m)}$ tend to $8$ and $\rho_m/2$, respectively.
\end{lemma}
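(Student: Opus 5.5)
The plan is to compute the intensities directly from Lemma~\ref{lem:slope-point-process} and then feed in the large genus asymptotics of the Siegel--Veech constants, exactly as in Lemma~\ref{lem:large-genus-convergence-of-intensities}. By Lemma~\ref{lem:slope-point-process} and the scaling rule for intensities at the end of Section~\ref{subsec:Palm-distribution}, the intensity of $g^2\xi_g^{(f,\ell)}$ (respectively $g^2\xi_g^{(m)}$) equals $c^{\mathrm{red}}(\mathcal C_g)/(2g^2)$, where $\mathcal C_g$ is the relevant stratum. If a stratum is disconnected, I will work with the normalized Masur--Veech measure on the whole stratum. This is a finite convex combination of the component measures. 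Siegel--Veech constants, slope point process distributions and their intensities are all affine in the measure, so the constant is the corresponding weighted average of $c(\mathcal C)$ over components. By Lemma~\ref{lem:SV-quadratic-short-holonomy-vectors}, $c^{\mathrm{red}}=c$ componentwise, hence also for the average. The task therefore reduces to showing
\[
\frac{c(\mathcal H^1_g(m_1,\ldots,m_{\ell(g)},1,\ldots,1))}{2g^2}\to 8,
\qquad
\frac{c(\mathcal H^1_g(m,\ldots,m))}{2g^2}\to\frac{\rho_m}{2}.
\]

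For both sequences I will use~\eqref{equ:Siegel-Veech-asymptotic}, namely $c/2=\frac{(2g+n-2)^2}{2}(1+O(1/g))$, with $n$ the number of zeros. This requires that Randecker's asymptotic in~\cite{Randecker2025} applies uniformly along these sequences, as it does in the cited setting for strata whose number of zeros grows with $g$. Checking this uniformity is the main obstacle. I would first verify that the hypotheses in~\cite{Randecker2025} cover $\ell(g)$ nonsimple zeros of order at most $f(g)$ with $f(g)^2\ell(g)/g\to0$. If they do not, I would fall back on extracting the intensity from the Poisson limit via a uniform integrability argument, outlined below.

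Assuming the asymptotic, the first case goes as follows. Here $n=\ell(g)+(2g-2-\sum m_i)$. Since $\sum m_i\le f(g)\ell(g)=o(g)$ and $\ell(g)=o(g)$, we get $2g+n-2=4g-4-o(g)=4g(1+o(1))$. Then $c/(2g^2)\to 16g^2/(2g^2)=8$.

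In the second case, $n=(2g-2)/m$, so $2g+n-2=(2g-2)\frac{m+1}{m}\sim 2g\frac{m+1}{m}$. Then $c/(2g^2)\to 2\left(\frac{m+1}{m}\right)^2=\rho_m/2$.

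As a consistency check, each limit should equal half the holonomy Poisson intensity of Theorem~\ref{thm:other-strata-ppp}. Indeed $T_{[0,1]}$ has area $1/2$, and $16/2=8$ while $\rho_m/2$ matches the second case. The fallback, if the Siegel--Veech asymptotic is unavailable, uses that the intensity is $E[(g\Lambda_g^+)(T_{[0,1]})]$ by the computation in the proof of Proposition~\ref{prop:large-genus-point-process}. One then shows convergence of this first moment from the moment computation of Lemma~\ref{lem:expectation-on-generic-part} with $K=1$, plus a bound on the contribution of the non-generic part. That bound is the delicate point, and I would expect to need an $L^2$ estimate on the counting function to control it.
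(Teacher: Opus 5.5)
Your proposal is correct and follows the paper's proof. The intensity is $c(\cdot)/(2g^2)$ by Lemma~\ref{lem:slope-point-process} and Lemma~\ref{lem:SV-quadratic-short-holonomy-vectors}, and \eqref{equ:Siegel-Veech-asymptotic} with $n=\ell(g)+2g-2-\sum_i m_i$ (so that $2g+n-2=4g-4-\sum_i(m_i-1)=4g(1+o(1))$), respectively $n=(2g-2)/m$, gives the limits $8$ and $\rho_m/2$. The paper applies \eqref{equ:Siegel-Veech-asymptotic} directly along both sequences without a separate uniformity check or any treatment of disconnected strata, so your fallback via first moments of $g\Lambda_g^+$ is never needed.
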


\begin{proof} 
By Lemma~\ref{lem:slope-point-process} and Lemma~\ref{lem:SV-quadratic-short-holonomy-vectors}, the intensities of $g^2\xi_g^{(f,\ell)}$ and $g^2\xi_g^{(m)}$ are the corresponding Siegel--Veech constants divided by $2g^2$. 

For the first sequence, the number of zeros is $\ell(g) + 2g-2-\sum_{i=1}^{\ell(g)}m_i$. Then~\eqref{equ:Siegel-Veech-asymptotic} gives
\[
\frac{c(\mathcal H_g^1(m_1,\ldots,m_{\ell(g)},1,\ldots,1))}{2}\sim \frac{1}{2}\left(4g-4-\sum_{i=1}^{\ell(g)}(m_i-1)\right)^2.
\]
Since $2\le m_i\le f(g)$ and $f(g)^2\ell(g)/g\to0$, $\sum_{i=1}^{\ell(g)}(m_i-1)\leq f(g)^2\ell(g) = o(g)$. Thus, the right-hand side is asymptotic to $8g^2$.

For $\mathcal H_g^1(m,\ldots,m)$, the number of zeros is $(2g-2)/m$, and hence~\eqref{equ:Siegel-Veech-asymptotic} gives 
\[ 
\frac{c(\mathcal H_g^1(m,\ldots,m))}{2} \sim 2\left(\frac{m+1}{m}\right)^2g^2 = \frac{\rho_m}{2}g^2.\]
The result follows.
\end{proof}

\begin{proof}[Proof of Theorem~\ref{thm:other-strata-gap-asymptotics}]
The point process convergences follow from Theorem~\ref{thm:other-strata-ppp} by the same $g_t$-rescaling argument as in the proof of Proposition~\ref{prop:large-genus-point-process}. Together with Lemma~\ref{lem:other-strata-convergence-of-intensities}, Lemma~\ref{lem:continuous-mapping-theorem} then gives the corresponding convergence of the gap sequences to
\[
    \bigotimes_{j\in\mathbb Z}\mathrm{Exp}(8)
    \qquad\text{and}\qquad
    \bigotimes_{j\in\mathbb Z}\mathrm{Exp}\left(\frac{\rho_m}{2}\right),
\]
respectively. The asserted uniform convergence of the finite-dimensional gap distribution functions follows from the multivariate P\'olya theorem exactly as in the proof of Proposition~\ref{prop:large-genus-approximation}.
\end{proof}

\newcommand{\etalchar}[1]{$^{#1}$}


\end{document}